\newcommand{\Var}{\mathrm{Var}}
\newcommand{\GP}{\mathrm{GP}}
\newcommand{\bI}{\bm{I}}
\newcommand\bx{\bm{x}}
\newcommand\data{\mathbb{D}_n}
\theoremstyle{plain}
\newtheorem{thm}{Theorem}[section]
\newtheorem{lem}[thm]{Lemma}
\newtheorem{prop}[thm]{Proposition}
\newtheorem{cor}[thm]{Corollary}     
\theoremstyle{remark}     \newtheorem*{rem}{Remark}    
\theoremstyle{Theorem} 
\newtheorem{assumption}{Condition}
\renewcommand{\theassumption}{(A\arabic{assumption})} 
\newcommand{\PP}{\mathbb{P}}
\newcommand{\EE}{\mathbb{E}}
\newcommand{\bbR}{\mathbb{R}}
\newcommand{\bbH}{\mathbb{H}}
\newcommand{\mX}{\mathcal{X}}
\newcommand{\tr}{\mathrm{tr}}
\newcommand{\s}{\sum_{i=1}^{\infty}}
\newcommand{\Ltwo}{L^2_{p_X}(\mX)}
\begin{document}

\title{Equivalence of Convergence Rates of Posterior Distributions and Bayes Estimators for Functions and Nonparametric Functionals}
\author[ ]{Zejian Liu\thanks{zejian.liu@rice.edu}}
\author[ ]{Meng Li \thanks{meng@rice.edu}}
\affil[ ]{Department of Statistics, Rice University}
\date{}

\maketitle

\begin{abstract}
We study the posterior contraction rates of a Bayesian method with Gaussian process priors in nonparametric regression and its plug-in property for differential operators. For a general class of kernels, we establish convergence rates of the posterior measure of the regression function and its derivatives, which are both minimax optimal up to a logarithmic factor for functions in certain classes. Our calculation shows that the rate-optimal estimation of the regression function and its derivatives share the same choice of hyperparameter, indicating that the Bayes procedure remarkably adapts to the order of derivatives and enjoys a generalized plug-in property that extends real-valued functionals to function-valued functionals. This leads to a practically simple method for estimating the regression function and its derivatives, whose finite sample performance is assessed using simulations.

Our proof shows that, under certain conditions, to any convergence rate of Bayes estimators there corresponds the same convergence rate of the posterior distributions (i.e., posterior contraction rate), and vice versa. This equivalence holds for a general class of Gaussian processes and covers the regression function and its derivative functionals, under both the $L_2$ and $L_{\infty}$ norms. In addition to connecting these two fundamental large sample properties in Bayesian and non-Bayesian regimes, such equivalence enables a new routine to establish posterior contraction rates by calculating convergence rates of nonparametric point estimators. 

At the core of our argument is an operator-theoretic framework for kernel ridge regression and equivalent kernel techniques. We derive a range of sharp non-asymptotic bounds that are pivotal in establishing convergence rates of nonparametric point estimators and the equivalence theory, which may be of independent interest. 

\end{abstract}

\section{Introduction}

\sloppy Posterior contraction rates, or convergence rates of the posterior measure, have been widely used in the Bayesian literature to study the asymptotic behavior of posterior distributions. Let $(\mathfrak{X}^{(n)},\mathscr{A}^{(n)},P_\theta^{(n)}:\theta\in\Theta)$ be a sequence of statistical experiments with a possibly infinite-dimensional parameter space $\Theta$ and observations $\mathbb{X}^{(n)}$ indexed by the sample size $n$. If a prior distribution $\Pi$ is put on $\Theta$, a contraction rate of the posterior distribution $\Pi_n(\cdot \mid \mathbb{X}^{(n)})$ at the parameter $\theta_0$ with respect to a semimetric $d$ is a sequence $\epsilon_n$ such that for every $M_n\rightarrow\infty$,
\begin{equation} \label{eq:contraction.def} 
\Pi_n(\theta: d(\theta,\theta_0)\geq M_n\epsilon_n\mid\mathbb{X}^{(n)}) \rightarrow 0
\end{equation}
in $P_{\theta_0}^{(n)}$-probability. General theory for posterior contraction rates in nonparametric Bayes was developed in the seminal work \cite{ghosal2000convergence} by verifying the prior mass and entropy conditions; see also \cite{shen2001rates} for a similar result. The past two decades have seen a surge of interest in establishing posterior contraction rates under a variety of models and prior distributions \citep{ghosal2001entropies,kleijn2006misspecification,ghosal2007convergence,ghosal2007posterior,van2008rates,kruijer2008posterior,castillo2008lower,rousseau2010rates,shen2013adaptive,castillo2014bayesian,van2017adaptive,bhattacharya2019bayesian}. We refer to Section 3.3 in \cite{rousseau2016frequentist} for an excellent review.

In strike contrast to contraction rates that pertain to the entire posterior distribution and are unique to Bayesian inference, we say $\epsilon_n$ is a convergence rate of a point estimator $\hat{\theta}_n$ if there holds
\begin{equation} \label{eq:convergence.def} 
d(\hat{\theta}_n,\theta_0)=O_P(\epsilon_n)
\end{equation}
in $P_{\theta_0}^{(n)}$-probability. Deriving the rate of convergence of $\hat{\theta}_n$ in infinite-dimensional settings to study asymptotic properties has attracted numerous attention and arguably remained a mainstay in nonparametric statistics and learning theory. 

While both convergence rates are concerned with large sample properties of a statistical procedure, they are two distinct subjects. It is widely perceived that posterior contraction rates appear to be a stronger result than convergence rates when $\hat{\theta}_n$ is a Bayes estimator, a point estimator that minimizes the posterior expected loss. Indeed, convergence of the posterior at a rate ensures the existence of point estimators that converge to the true $\theta_0$ at the same rate. For example, if $\epsilon_n$ is a contraction rate, the center $\hat{\theta}_n$ of the smallest ball that contains posterior mass at least $1/2$ may achieve the same convergence rate (cf. Theorem 2.5 in \cite{ghosal2000convergence}). In addition, if we consider a bounded metric $d$ whose square is convex, the posterior mean $\hat{\theta}_n:=\int \theta d\Pi_n(\theta\mid X^{(n)})$, which is a special case of Bayes estimators under the squared error loss \citep{le2012asymptotic}, satisfies that  
\begin{equation}\label{eq:jensen}
d(\hat{\theta}_n,\theta_0)\leq M_n\epsilon_n+\|d\|_\infty^{1/2}\Pi_n(\theta:d(\theta,\theta_0)>M_n\epsilon_n\mid \mathbb{X}^{(n)}), 
\end{equation}
according to a direct application of Jensen's inequality (cf. Theorem~8.8 in \cite{ghosal2017fundamentals} and the discussion therein), where $\|d\|_\infty$ is a bound on the maximal distance. These results indicate that the posterior may not contract at a rate faster than the optimal convergence rate of point estimators. However, little is known in the literature about the converse statement in the nonparametric setting, that is, whether the seemingly simpler convergence rate of Bayes estimators can lead to the rate of convergence of the entire posterior distribution.

We study this problem by considering the nonparametric regression model 
\begin{equation}\label{eq:model}
Y_i=f(X_i)+\varepsilon_i,\quad \varepsilon_i\sim N(0,\sigma^2),
\end{equation}
where the data $\data=\{X_i,Y_i\}_{i=1}^{n}$ are i.i.d. samples from a distribution $\PP_0$ on $\mX\times \bbR$ that is determined by $\PP_X$, $f_0$, and $\sigma^2$, which are respectively the marginal distribution of $X_i$, the true regression function, and the noise variance that is possibly unknown. Let $p_X$ denote the density of $\PP_X$ with respect to Lebesgue measure $\mu$. Here $\mX\subset \bbR^p$ is a compact metric space for $p\geq 1$.

In this paper, we show that under mild conditions, convergence rates of the posterior mean will automatically yield the same convergence rate of the posterior distribution in nonparametric regression models with random design and Gaussian process (GP) priors. The equivalence of two convergence rates immediately bridges the nonparametric Bayesian literature and learning theory, which implies that an asymptotic rate obtained in one of these problems yields the same rate in the other problem. This interesting equivalence between the two different convergences, coupled with convergence rates of point estimators that can be derived systemically using an operator-theoretic framework, provides a new approach that is often technically simpler to establish contraction rates under both the $L_2$ and $L_{\infty}$ norms. In addition, we show that such equivalence applies to differential operators, enabling the study of asymptotic properties in estimating function derivatives. Our calculation indicates that GP priors enjoy a remarkable property that the posterior distribution remains minimax optimal when convoluted with the differential operator; this generalizes the classical plug-in property on real-valued functionals to function-valued functionals, leading to a concept of \textit{nonparametric plug-in property} and providing practically simple methods to make inference on function derivatives.

This paper makes three main contributions. The first main contribution is that we propose a new framework for studying convergence rates of posterior distributions in nonparametric regression, which covers a general class of Gaussian process priors and both the $L_2$ and $L_{\infty}$ norms. In the existing literature, posterior contraction of nonparametric regression based on GP priors often boils down to the concentration function~\citep{van2008rates} to verify the classical prior mass and entropy conditions, which consists of a small ball probability and a concentration measure of the GP prior with certain kernels; for example, see \cite{van2011information, li2017bayesian} for particular kernels therein. In work along this line, the commonly used metrics for establishing posterior contraction rates are Hellinger and total variation distances. While contraction rates under the $L_2$ norm are relatively well studied, there is a limited development on contraction rates under the $L_\infty$ norm, with a few exceptions including \cite{castillo2014bayesian,yoo2016supremum,yang2017frequentist}.

The proposed equivalence provides an alternative approach to establish contraction rates. In particular, rather than rely on the concentration function, one can resort to the convergence of the posterior mean and some mild conditions regarding the eigendecomposition of the covariance kernel of the GP prior, which are both systematically addressed in a unified non-asymptotic analysis (see our third contribution below) and an equivalent kernel technique. Hence, one may build on the well established literature on convergence rates of Bayes estimators, particularly through kernel ridge regression (KRR) \citep{wahba1990spline,cucker2007learning,mendelson2010regularization}, to conveniently obtain posterior contraction rates under both the $L_2$ and $L_{\infty}$ norms. As an example, under the proposed framework, we establish posterior contraction rates for H\"older smooth functions as well as analytic-type functions, which are minimax optimal up to a logarithmic factor.

As the second contribution, we show that the equivalence theory also covers function-valued functionals of the regression function, which hereafter are referred to as \textit{nonparametric functionals}. As an important implication, we find GP priors enjoy a ``nonparametric plug-in property'' that generalizes the plug-in property coined by \cite{bickel2003nonparametric} from real-valued to nonparametric functionals. A nonparametric estimator attains the plug-in property if it simultaneously achieves the minimax convergence rate for estimating the unknown function and the parametric $1/\sqrt{n}$-rate for estimating some bounded linear functionals. Although the plug-in property was originally proposed in the frequentist context, it holds for many Bayes estimators \citep{castillo2013nonparametric}. However, (bounded) linear functionals considered in the plug-in property is restrictive in that the derivatives of a function at a fixed point are excluded from such functionals. In practice, the entire derivative function as opposed to its evaluation at a fixed point might be more of interest \citep{holsclaw2013gaussian,dai2018derivative}, which to date is a largely unaddressed problem; one notable exception is~\cite{yoo2016supremum}, which studies a random series prior based on tensor product of B-splines. We extend the equivalence theory to the differential operators; in particular, this leads to a new approach to infer function derivatives in nonparametric regression. We show that the posterior distribution concentrates at such nonparametric functional at a nearly minimax rate \citep{stone1982optimal} in specific examples, thus achieving the nonparametric plug-in property. In addition, we find the rate-optimal estimation of $f_0$ and its derivatives share the same choice of hyperparameter in the GP prior, which is a remarkable property as it indicates the Bayes procedure automatically adapts to the order of derivative in this case. Finite sample performance of the proposed nonparametric plug-in procedure is assessed through simulations.

The third main contribution of the present paper is that we provide a non-asymptotic analysis of the convergence of Bayes estimators, which is also closely related to KRR. Our non-asymptotic analysis is built on an operator-theoretic framework \citep{smale2005shannon,smale2007learning} and the equivalent kernel technique, covering the estimation of posterior mean and variance, derivatives of posterior mean and posterior variances of derivatives of Gaussian processes. We substantially extend the analysis in our earlier work~\citep[Technical Report,][]{liu2020non} by deriving a range of either new or sharper error bounds. The developed non-asymptotic bounds construct rate-optimal estimators of the regression function and its derivatives, and are also crucial in proving the equivalence theory and nonparametric plug-in property. We remark that the our non-asymptotic analysis may be of independent interest in broader contexts such as function estimation and information theory.

When the error variance $\sigma^2$ is unknown, we adopt an empirical Bayes scheme where we estimate the error variance by its marginal maximum likelihood estimator (MMLE). We show that the MMLE is consistent, and all established results on the equivalence of posterior convergence and contraction hold under the empirical Bayes scheme.

\subsection{Notation}
We write $X=(X_1^T,\ldots,X_n^T)^T\in\bbR^{n\times p}$ and $Y=(Y_1,\ldots,Y_n)^T\in \bbR^n$. Let $\|\cdot\|$ be the Euclidean norm; for $f, g: \mX \rightarrow \bbR$, let $\|f\|_\infty$ be the $L_\infty$ (supremum) norm, $\|f\|_2=(\int_\mX f^2d\PP_X)^{1/2}$ the $L_2$ norm with respect to the covariate distribution $\PP_X$, and $\left<f,g\right>_2=(\int_\mX fgd\PP_X)^{1/2}$ the inner product. The corresponding $L_2$ space relative to $\PP_X$ is denoted by $\Ltwo$; we write $L^2(\mX)$ as the $L_2$ space with respect to Lebesgue measure $\mu$. Denote the space of all essentially bounded functions by $L^\infty(\mX)$. Let  $\mathbb{N}$ be the set of all positive integers and write $\mathbb{N}_0=\mathbb{N}\cup\{0\}$. We let $C(\mX)$ and $C(\mX,\mX)$ denote the space of continuous functions and continuous bivariate functions. In one-dimensional case, for $\Omega\subset\bbR$, a function $f:\Omega\rightarrow\bbR$ and $k\in\mathbb{N}$, we use $f^{(k)}$ to denote its $k$th derivative as long as it exists and $f^{(0)}=f$. Let $C^m(\Omega)=\{f:\Omega\rightarrow\bbR \mid f^{(k)}\in C(\Omega) \ {\rm for\ all\ } 1\leq k\leq m\}$ denote the space of $m$-times continuously differentiable functions and $C^{2m}(\Omega,\Omega)=\{K:\Omega\times \Omega\rightarrow\bbR \mid \partial^k_x\partial^k_{x'}K(x,x')\in C(\Omega,\Omega) \ {\rm for\ all\ } 1\leq k\leq m\}$ denote the space of $m$-times continuously differentiable bivariate functions, where $\partial^k_x=\partial^k/\partial x^k$. For two sequences $a_n$ and $b_n$, we write $a_n \lesssim b_n$ if $a_n \leq C b_n$ for a universal constant $C>0$, and $a_n \asymp b_n$ if $a_n \lesssim b_n$ and $b_n \lesssim a_n$.

\subsection{Organization}
In Section~\ref{sec:main.results} we describe our main results on the equivalence of posterior contraction rates and convergence rates when estimating the regression function, based on which we provide an alternative method to derive the nearly minimax optimal contraction rate for estimating functions in H\"older class under the $L_2$ and $L_\infty$ norms using kernels with polynomially decaying eigenvalues. The nonparametric plug-in property is presented in Section~\ref{sec:operator}, where we focus on the differential operator. Nearly minimax contraction rates for estimating the derivatives of H\"older functions are obtained. In Section~\ref{sec:MMLE} we propose an empirical Bayes scheme to address unknown error variance and establish the same equivalence theory.  Section~\ref{sec:non-asymptotic} is devoted to the non-asymptotic analysis based on an operator-theoretic approach and equivalent kernel technique, where we study the convergence rates of a range of quantities, including the posterior mean and variance, derivatives of the posterior mean,  and posterior variances of derivatives of GPs. Section~\ref{sec:simulation} carries out a simulation study to assess the finite sample performance of the proposed method for estimating the regression function and its derivative. All proofs are collected in Section~\ref{sec:proof}. 

\section{Equivalence of contraction and convergence rates}\label{sec:main.results}

Throughout the paper, we assume the true regression function $f_0\in\Ltwo$. We start with introducing the GP prior, posterior conjugacy, and the eigendecomposition of the covariance kernel.

\subsection{Preliminary: Prior and posterior conjugacy}
We assign a Gaussian process prior $\Pi$ on the regression function $f \sim \GP(0, \sigma^2 (n \lambda)^{-1} K)$, where $K(\cdot,\cdot):\mX\times\mX\rightarrow \bbR$ is a continuous, symmetric and positive definite bivariate function (i.e., the \textit{Mercer kernel}), and $\lambda>0$ is a regularization parameter that possibly depends on the sample size $n$. The rescaling factor $(n\lambda)^{-1}$ in the covariance kernel connects Bayes estimator with the kernel ridge regression \citep{wahba1990spline,cucker2007learning}; see also~\cite{yang2017frequentist} and Theorem 11.61 in \cite{ghosal2017fundamentals} for more discussion on this connection. 

By conjugacy, the posterior distribution $\Pi_n(\cdot\mid\data)$ is still a GP: $f |\data \sim \GP(\hat{f}_n, \hat{V}_n)$, where the posterior mean $\hat{f}_n$ and posterior covariance $\hat{V}_n$ are given by
\begin{align}
\hat{f}_n(\bx) &= K(\bx, X) [K(X, X) + n \lambda \bI_n]^{-1} Y,\\
\label{eq:variance}\hat{V}_n(\bx, \bx') &=  \sigma^2 (n \lambda)^{-1} \{K(\bx, \bx') - K(\bx, X)[K(X, X) + n \lambda \bI_n]^{-1} K(X, \bx')\},
\end{align}
for any $\bx, \bx' \in \mX$. Here $K(X, X)$ is the $n$ by $n$ matrix $(K(X_i, X_j))_{i, j = 1}^n$ and $K(\bx, X)$ is the 1 by $n$ vector $(K(\bx, X_i))_{i = 1}^n$. We write $\hat{V}_n(\bx):=\hat{V}_n(\bx,\bx)$ for the marginal posterior variance at $\bx$. 

The property of a GP prior is largely determined by its covariance kernel $K$.
By Mercer's Theorem, there exists an orthonormal basis $\{\phi_i\}_{i=1}^\infty$ of $\Ltwo$ and $\{\mu_i\}_{i=1}^\infty$ with $\mu_1\geq\mu_2\geq \cdots> 0$ and $\lim\limits_{i\rightarrow\infty}\mu_i=0$ such that for any $\bx,\bx'\in\mX$,
\begin{equation}
\label{eq:eigendecomposition}
K(\bx,\bx')=\sum_{i=1}^{\infty}\mu_i\phi_i(\bx)\phi_i(\bx'),
\end{equation}
where the convergence is absolute and uniform. We call $\{\mu_i\}_{i=1}^\infty$ and $\{\phi_i\}_{i=1}^\infty$ the eigenvalues and the eigenfunctions of $K$. Then the reproducing kernel Hilbert space (RKHS) $\bbH$ induced by $K$ can be characterized by a series representation 
\begin{equation}\label{eq:RKHS.norm}
\bbH=\left\{f\in \Ltwo: \|f\|^2_{\bbH}=\sum_{i=1}^{\infty}\frac{f_i^2}{\mu_i}<\infty, f_i=\left<f,\phi_i\right>_{2}\right\},
\end{equation}
equipped with the inner product $\left<f,g\right>_{\bbH}=\sum_{i=1}^{\infty}{f_ig_i}/{\mu_i}$ for any $f=\sum_{i=1}^{\infty}f_i\phi_i$ and $g=\sum_{i=1}^{\infty}g_i\phi_i$ in $\bbH$.

It is well known that the posterior mean $\hat{f}_n$ under the considered GP prior coincides with the kernel ridge regression estimator, which is defined by the following optimization problem:
\begin{equation}\label{eq:KRR} 
\hat{f}_n = \underset{f \in \bbH}{\arg\min} \left\{\frac{1}{n}\sum_{i = 1}^n (Y_i - f(X_i))^2 + \lambda \|f\|^2_{\bbH}\right\}.
\end{equation}

Our proofs also make extensive use of the so-called equivalent kernel $\tilde{K}$ \cite[Chapter~7]{rasmussen2006}, which shares the same eigenfunctions with $K$ with altered eigenvalues $\nu_i={\mu_i}/{(\lambda+\mu_i)}$ for $i\in\mathbb{N}$, i.e., 
\begin{equation}\label{eq:equivalent.kernel}
\tilde{K}(\bx,\bx')=\s \nu_i\phi_i(\bx)\phi_i(\bx').
\end{equation}
Note that $\tilde{K}$ is also a Mercer kernel. Let $\tilde{\bbH}$ be the RKHS induced by $\tilde{K}$, which is equivalent to $\bbH$ as a function space, but with a different inner product
\begin{equation}
\left<f,g\right>_{\tilde{\bbH}}=\left<f,g\right>_2+\lambda\left<f,g\right>_\bbH.
\end{equation}
We call the corresponding norm $\|\cdot\|_{\tilde{\bbH}}$ the equivalent RKHS norm. The equivalent RKHS norm upper bounds the $L_\infty$ norm by noting that
\begin{equation}
f(\bx)=\left<f,\tilde{K}_{\bx}\right>_{\tilde{\bbH}}\leq \|f\|_{\tilde{\bbH}} \|K_{\bx}\|_{\tilde{\bbH}}= \sqrt{\tilde{K}(\bx,\bx)}\|f\|_{\tilde{\bbH}},
\end{equation}
which follows from Cauchy-Schwarz inequality. Taking the supremum on both sides, we obtain that for any $f\in\tilde{\bbH}$, it holds
\begin{equation}\label{eq:sup.RKHS.norm}
\|f\|_\infty\leq \tilde{\kappa} \|f\|_{\tilde{\bbH}},
\end{equation}
where we define $\tilde{\kappa}^2:=\sup_{\bx\in\mX}\tilde{K}(\bx,\bx)$.

\subsection{Main results on the equivalence}
We assume the following conditions on the eigenfunctions of the covariance kernel.

\begin{assumption}
The eigenfunctions of $K$ are uniformly bounded, i.e., there exists a constant $C_\phi>0$ such that $\|\phi_i\|_\infty\leq C_\phi$ for all $i\in\mathbb{N}$.
\end{assumption}

\begin{assumption}
For any $\bx,\bx'\in\mX$, there exists $L_\phi>0$ such that $|\phi_i(\bx)-\phi_i(\bx')| \leq L_\phi i \|\bx-\bx'\|$ for any $i\in\mathbb{N}$.
\end{assumption}

Recall that the equivalent kernel $\tilde{K}$ assumes an eigendecompisition with the same eigenfunctions as those of $K$. Under Condition (A1), we have
\begin{equation}\label{eq:def.kappa}
\tilde{\kappa}^2\leq C_\phi^2\sum_{i=1}^{\infty}\nu_i\lesssim\sum_{i=1}^{\infty}\frac{\mu_i}{\lambda+\mu_i},
\end{equation}
where the last expression is the so-called \textit{effective dimension} of the kernel $K$ with respect to $p_X$~\citep{zhang2005learning}. We also define a high-order counterpart
\begin{equation}\label{eq:tilde01}
\hat{\kappa}_{01}^2:=\s\frac{i\mu_i}{\lambda+\mu_i}.
\end{equation}

We now present our first equivalence result concerning the estimation of the regression function.

\begin{thm}\label{thm:contraction}
Let $\epsilon_n$ be a sequence such that $\epsilon_n\rightarrow0$ and $n\epsilon_n^2\rightarrow\infty$. Suppose $\lambda$ is chosen such that $\tilde{\kappa}^2=o(\sqrt{n/\log n})$, $\tilde{\kappa}^2=O(n\epsilon_n^2/\log n)$ and $\hat{\kappa}_{01}^2=o(n)$. Under Conditions (A1) and (A2), $\epsilon_n$ is a convergence rate of $\hat{f}_n$ in $\PP_0^{(n)}$-probability if and only if it is a posterior contraction rate of $\Pi_n(\cdot \mid \data)$ at $f_0$; this equivalence holds for any bounded $f_0 \in \Ltwo$ under the $\|\cdot\|_p$ norm for $p = 2$ or $\infty$. 
\end{thm}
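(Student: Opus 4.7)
My approach rests on the exact posterior representation $f\mid\data = \hat{f}_n + Z$ with $Z\sim\GP(0,\hat{V}_n)$, which yields the deterministic inequality
\[
\bigl|\|f-f_0\|_p - \|\hat{f}_n-f_0\|_p\bigr| \leq \|Z\|_p.
\]
Both directions of the equivalence reduce to the same quantitative statement: under the stated conditions on $\lambda$, there is a constant $c>0$ such that
\[
\Pi_n\bigl(\|Z\|_p \leq c\,\epsilon_n \bigm| \data\bigr) \to 1
\]
in $\PP_0^{(n)}$-probability. I would prove this bridging lemma first, then deduce the two implications as short corollaries.

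The bridging lemma splits by norm. For $p=2$, posterior Fubini gives $E[\|Z\|_2^2\mid\data] = \int_\mX \hat{V}_n(\bx,\bx)\,d\PP_X(\bx)$, and the non-asymptotic variance bounds developed in Section~\ref{sec:non-asymptotic} (essentially $\sup_\bx \hat{V}_n(\bx,\bx) \lesssim \sigma^2\tilde{\kappa}^2/n$ and the analogue for its trace) combined with $\tilde{\kappa}^2 = O(n\epsilon_n^2/\log n)$ give $E[\|Z\|_2^2\mid\data] = O_P(\epsilon_n^2/\log n)$; Markov then yields the desired bound on $\|Z\|_2$. For $p=\infty$, I would apply the Borell--TIS inequality
\[
\Pi_n\bigl(\|Z\|_\infty > E[\|Z\|_\infty\mid\data] + t\bigm|\data\bigr) \leq \exp\!\Bigl(-\tfrac{t^2}{2\sup_\bx \hat{V}_n(\bx,\bx)}\Bigr),
\]
together with Dudley's entropy bound for $E[\|Z\|_\infty\mid\data]$. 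Here Condition~(A2) bounds the modulus of continuity of $Z$ through the eigenexpansion of the equivalent kernel $\tilde K$, with $\hat{\kappa}_{01}^2 = o(n)$ ensuring the Dudley integral over $\mX$ is finite and $o(\epsilon_n)$; the condition $\tilde{\kappa}^2 = o(\sqrt{n/\log n})$ is what keeps the Gaussian expected supremum below $\epsilon_n$ after absorbing the $\sqrt{\log n}$ from chaining.

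With the bridging lemma in hand, the ``convergence $\Rightarrow$ contraction'' direction is immediate: on the event $\{\|\hat{f}_n-f_0\|_p \leq c\,\epsilon_n\}$, the triangle inequality gives
\[
\Pi_n(\|f-f_0\|_p > M_n\epsilon_n\mid\data) \leq \Pi_n(\|Z\|_p > (M_n-c)\,\epsilon_n\mid\data)\to 0
\]
as $M_n\to\infty$. For the converse, I would argue by contradiction: if $\|\hat{f}_n-f_0\|_p$ is not $O_P(\epsilon_n)$, then along some subsequence and some $M_n\to\infty$, one has $\|\hat{f}_n-f_0\|_p > 2 M_n \epsilon_n$ with nonvanishing $\PP_0^{(n)}$-probability. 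Restricting to this subsequence and the posterior event $\{\|Z\|_p \leq M_n\epsilon_n\}$ (whose posterior probability tends to one by the bridging lemma), the reverse triangle inequality yields $\|f-f_0\|_p \geq \|\hat{f}_n-f_0\|_p - \|Z\|_p > M_n\epsilon_n$, so $\Pi_n(\|f-f_0\|_p > M_n\epsilon_n\mid\data)\to 1$ along the subsequence, contradicting the assumed contraction rate.

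The main obstacle I anticipate is the $L_\infty$ version of the bridging lemma: it requires genuinely non-asymptotic control of the pointwise posterior variance $\hat{V}_n(\bx,\bx)$ \emph{uniformly} in $\bx$, as well as of the oscillation of $Z$ about its mean. Both ingredients need to be imported from Section~\ref{sec:non-asymptotic}, and reconciling them with the rate $\epsilon_n$ through the three quantitative conditions on $\lambda$ is where the subtlety lies; in particular, the logarithmic slack in $\tilde{\kappa}^2 = O(n\epsilon_n^2/\log n)$ is precisely what absorbs the $\sqrt{\log n}$ cost of passing from pointwise to uniform control of the Gaussian field.
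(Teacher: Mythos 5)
Your proposal is correct, and its first half coincides with the paper's argument: the forward direction in the paper is exactly your bridging lemma for $p=\infty$ (Borell--TIS applied to the centered posterior $Z\sim\GP(0,\hat V_n)$, the bound $\|\hat V_n\|_\infty\lesssim\sigma^2\tilde\kappa^2/n$ imported from the non-asymptotic section, and Dudley's entropy integral controlled via Condition (A2) and $\hat\kappa_{01}^2=o(n)$), followed by the triangle inequality. Two points where you genuinely diverge. First, for $p=2$ the paper does not use Fubini--Markov on $\EE[\|Z\|_2^2\mid\data]$; it simply bounds $\|f-\hat f_n\|_2\le\|f-\hat f_n\|_\infty$ and reuses the sup-norm concentration. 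Your route is more elementary and needs only the variance bound, at the cost of losing the exponential tail (which the paper's converse implicitly leans on). Second, and more substantively, your converse is different: the paper invokes the Jensen-inequality argument (Theorem 8.8 of Ghosal and van der Vaart) for the posterior mean restricted to a bounded ball, whereas you rerun the bridging lemma and use the reverse triangle inequality plus a subsequence contradiction. Your version avoids the delicacy of applying a bounded-convex-metric result to the unbounded norms $\|\cdot\|_2$ and $\|\cdot\|_\infty$ (the paper's "conditional on $\mathcal B_{n,p}(\epsilon_n)$" step is the least transparent part of its proof), and it makes visible that the converse, like the forward direction, is really a statement about the posterior spread $\|Z\|_p$ being $O(\epsilon_n)$. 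One minor correction of attribution: the condition $\tilde\kappa^2=o(\sqrt{n/\log n})$ is what the non-asymptotic variance bound $\|\hat V_n\|_\infty\le 2\sigma^2\tilde\kappa^2/n$ itself requires (it forces the contraction factor $C(n,\tilde\kappa)\le 1/2$ in the equivalent-kernel bound), while it is $\tilde\kappa^2=O(n\epsilon_n^2/\log n)$ alone that keeps the Dudley bound $\tilde\kappa\sqrt{\log n/n}$ at order $\epsilon_n$; also that bound is $O(\epsilon_n)$, not $o(\epsilon_n)$, which is all the argument needs.
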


Using the triangle inequality, the posterior mean $\hat{f}_n$ in Theorem~\ref{thm:contraction} can be replaced by any efficient Bayes estimator $\tilde{f}_n$ such that $\tilde{f}_n=\hat{f}_n+O_P(\epsilon_n)$ in $\PP_0^{(n)}$-probability. We formalize this result in the following corollary.
\begin{cor}\label{cor:contraction}

Under the conditions of Theorem~\ref{thm:contraction}, for any estimator $\tilde{f}_n$ such that $\|\tilde{f}_n - \hat{f}_n\|_p = O_P(\epsilon_n)$ in $\PP_0^{(n)}$-probability,
$\epsilon_n$ is a convergence rate of $\tilde{f}_n$ if and only if it is a posterior contraction rate of $\Pi_n(\cdot \mid \data)$ at $f_0$; this equivalence holds for any bounded $f_0 \in \Ltwo$ under the $\|\cdot\|_p$ norm for $p = 2$ or $\infty$.

\end{cor}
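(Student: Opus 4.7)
The plan is to deduce Corollary~\ref{cor:contraction} directly from Theorem~\ref{thm:contraction} by inserting the auxiliary estimator $\tilde{f}_n$ on either side of the equivalence through a triangle inequality argument. Since Theorem~\ref{thm:contraction} already pins down the equivalence between posterior contraction at rate $\epsilon_n$ and convergence of the posterior mean $\hat{f}_n$ at rate $\epsilon_n$, the only additional ingredient I need is to transfer the convergence rate between $\hat{f}_n$ and $\tilde{f}_n$, which is precisely what the hypothesis $\|\tilde{f}_n-\hat{f}_n\|_p = O_P(\epsilon_n)$ supplies.

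Concretely, I would first establish the forward direction: assume $\epsilon_n$ is a convergence rate of $\tilde{f}_n$, i.e.\ $\|\tilde{f}_n-f_0\|_p = O_P(\epsilon_n)$ in $\PP_0^{(n)}$-probability. By the triangle inequality,
\begin{equation*}
\|\hat{f}_n - f_0\|_p \;\leq\; \|\hat{f}_n - \tilde{f}_n\|_p + \|\tilde{f}_n - f_0\|_p \;=\; O_P(\epsilon_n),
\end{equation*}
so $\epsilon_n$ is also a convergence rate of $\hat{f}_n$, and Theorem~\ref{thm:contraction} promotes this to a posterior contraction rate of $\Pi_n(\cdot\mid\data)$ at $f_0$ under $\|\cdot\|_p$. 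For the reverse direction, assume $\epsilon_n$ is a posterior contraction rate at $f_0$. Applying Theorem~\ref{thm:contraction} in the opposite direction gives $\|\hat{f}_n - f_0\|_p = O_P(\epsilon_n)$, and once again the triangle inequality combined with the hypothesis yields
\begin{equation*}
\|\tilde{f}_n - f_0\|_p \;\leq\; \|\tilde{f}_n - \hat{f}_n\|_p + \|\hat{f}_n - f_0\|_p \;=\; O_P(\epsilon_n),
\end{equation*}
completing the equivalence.

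Because the conditions imposed on $\lambda$, $\tilde{\kappa}^2$ and $\hat{\kappa}_{01}^2$ are exactly those of Theorem~\ref{thm:contraction}, no additional assumption on the GP prior or on the eigenstructure of $K$ has to be verified for the corollary. The only subtlety, and the closest thing to an obstacle, is bookkeeping: I need to confirm that the $O_P$ statements combine correctly under $\PP_0^{(n)}$ and that the argument goes through uniformly for $p=2$ and $p=\infty$. Both are immediate since the triangle inequality is valid in each norm and $O_P(\epsilon_n)+O_P(\epsilon_n)=O_P(\epsilon_n)$ for every fixed norm. Thus the corollary reduces to a one-line triangle-inequality consequence of Theorem~\ref{thm:contraction}.
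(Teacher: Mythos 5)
Your proposal is correct and matches the paper's intended argument exactly: the paper introduces the corollary with the remark that ``using the triangle inequality, the posterior mean $\hat{f}_n$ in Theorem~\ref{thm:contraction} can be replaced by any estimator $\tilde{f}_n$ with $\tilde{f}_n=\hat{f}_n+O_P(\epsilon_n)$,'' and gives no further proof. Your two-directional triangle-inequality reduction to Theorem~\ref{thm:contraction} is precisely that argument, carried out in full.
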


\begin{rem}
The conditions of Theorem~\ref{thm:contraction} can be verified through direct calculations based on the decay rate of eigenvalues. For example, we provide the rates of $\tilde{\kappa}^2$ and $\hat{\kappa}_{01}^2$ for kernels with polynomially decaying eigenvalues later in the paper in Lemma~\ref{lem:differentiability.matern}.
\end{rem}

\begin{rem}
Theorem~\ref{thm:contraction} provides a new approach for establishing posterior contraction rates. Thanks to the established equivalence, computing the contraction rate now boils down to analyzing the convergence rate of the posterior mean as well as the eigendecomposition of covariance kernel, which are well studied problems in nonparametric statistics. Moreover, in view of the connection between posterior mean and kernel ridge regression estimator, we can also take advantage of the rich literature on KRR and kernel learning theory. Along this line, in Section~\ref{sec:Matern} we will derive the minimax optimal contraction rate for estimating H\"older smooth functions and analytic-type functions under the $L_2$ norm by using kernels with polynomially and exponentially decaying eigenvalues, respectively. As an implication of the equivalence theory, if a certain contraction rate is desired, one may start with constructing a Bayes point estimator that concentrates at the same rate, and then verify the conditions based on the eigendecomposition of covariance kernel.
\end{rem}

\subsection{Application to kernels with polynomially decaying eigenvalues}\label{sec:Matern}
Now we present a concrete example as a direct application of Theorem~\ref{thm:contraction}, where we use kernels with polynomially decaying eigenvalues $K_\alpha$ for the covariance kernel in the GP prior.

Specifically, we consider $\mX=[0,1]$ along with a uniform sampling process for $p_X$, where the corresponding probability measure $\PP_X$ becomes the Lebesgue measure $\mu$. We also assume that $K_\alpha$ satisfies an eigendecomposition with respect to $\mu$ with polynomially decaying eigenvalues, that is, 
\begin{equation}
\mu_i\asymp i^{-2\alpha},\quad i\in\mathbb{N}
\end{equation}
for some $\alpha>0$. This assumption is also made in \cite{amini2012sampled}, \cite{zhang2015divide} and \cite{yang2017frequentist}. We also assume that the eigenfunctions of $K_\alpha$ are the Fourier basis functions
\begin{equation}\label{eq:fourier}
\psi_1(x)=1,\ \psi_{2i}(x)=\sqrt{2}\cos(2\pi ix),\ \psi_{2i+1}=\sqrt{2}\sin(2\pi ix),\quad i\in\mathbb{N},
\end{equation}
which clearly satisfies Conditions (A1) and (A2) with $C_\phi=\sqrt{2}$ and $L_\phi=2\sqrt{2}\pi$.

We denote the equivalent kernel of $K_\alpha$ by $\tilde{K}_\alpha$, and the RKHS induced by $K_\alpha$ and $\tilde{K}_\alpha$ by $\bbH_\alpha$ and $\tilde{\bbH}_\alpha$, respectively. We consider the true regression function $f_0$ to lie in the H\"older space $H^{\alpha}[0,1]$:
\begin{equation}
H^{\alpha}[0,1]=\left\{f\in L^2[0,1]: \|f\|^2_{H^{\alpha}[0,1]}=\sum_{i=1}^{\infty}i^{\alpha}|f_i|<\infty,f_i=\left<f,\psi_i\right>_2\right\}.
\end{equation}
For any $f\in H^{\alpha}[0,1]$, $f$ has continuous derivatives up to order $\lfloor\alpha\rfloor$ and the $\lfloor\alpha\rfloor$th derivative is Lipschitz continuous of order $\alpha-\lfloor\alpha\rfloor$. 

Using our operator-theoretic approach in Section~\ref{sec:non-asymptotic}, it can be first shown that under the choice of regularization parameter $\lambda\asymp ({\log n}/{n})^{\frac{2\alpha}{2\alpha+1}}$, the posterior mean converges to the ground truth such that
\begin{equation}
\|\hat{f}_n-f_0\|_2 \lesssim \left(\frac{\log n}{n}\right)^{\frac{\alpha}{2\alpha+1}}
\end{equation}
in $\PP_0^{(n)}$-probability, where the rate is nearly minimax optimal under the $L_2$ norm (cf. Theorem 2.8 in \cite{tsybakov2008introduction}). We defer the formal statement to Lemma~\ref{lem:minimax.matern}. Therefore, by verifying the regularity conditions in Theorem~\ref{thm:contraction}, we conclude that with the same choice of $\lambda$, the posterior distribution will contract at $f_0$ at the same nearly minimax optimal rate under the $L_2$ norm.

\begin{thm}\label{thm:matern.contraction}
Suppose $f_0\in H^\alpha[0,1]$ for $\alpha>1/2$. If $K_\alpha$ is used in the GP prior with the regularization parameter $\lambda\asymp ({\log n}/{n})^{\frac{2\alpha}{2\alpha+1}}$, then the posterior distribution $\Pi_n(\cdot\mid\data)$ contracts at $f_0$ at the nearly minimax optimal rate $\epsilon_n = \left(\log n/n\right)^{\frac{\alpha}{2\alpha+1}}$ under the $L_2$ norm.
\end{thm}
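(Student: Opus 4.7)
The plan is to derive this result by direct application of the equivalence result in Theorem~\ref{thm:contraction} together with the (already announced) convergence rate of the posterior mean in Lemma~\ref{lem:minimax.matern}. Thus the work reduces to two tasks: (i) invoke the convergence rate $\|\hat{f}_n-f_0\|_2=O_P(\epsilon_n)$ with $\epsilon_n=(\log n/n)^{\alpha/(2\alpha+1)}$, which follows from Lemma~\ref{lem:minimax.matern} under the prescribed choice $\lambda\asymp(\log n/n)^{2\alpha/(2\alpha+1)}$; and (ii) verify that the side conditions of Theorem~\ref{thm:contraction} are met under this prior.

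For (ii), note that Conditions (A1) and (A2) are already observed to hold for the Fourier basis with $C_\phi=\sqrt{2}$ and $L_\phi=2\sqrt{2}\pi$, as remarked after \eqref{eq:fourier}. It remains to estimate $\tilde{\kappa}^2$ and $\hat{\kappa}_{01}^2$ under $\mu_i\asymp i^{-2\alpha}$; the bounds I need are supplied by the forthcoming Lemma~\ref{lem:differentiability.matern}. The computation I anticipate: $\tilde{\kappa}^2\asymp\sum_i\mu_i/(\lambda+\mu_i)$ is controlled by the index $i^\star\asymp\lambda^{-1/(2\alpha)}\asymp(n/\log n)^{1/(2\alpha+1)}$ at which $\mu_i$ crosses $\lambda$, yielding $\tilde{\kappa}^2\asymp i^\star\asymp(n/\log n)^{1/(2\alpha+1)}$; similarly $\hat{\kappa}_{01}^2\asymp(i^\star)^2\asymp(n/\log n)^{2/(2\alpha+1)}$.

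With these in hand the three quantitative conditions translate to: (a) $\tilde{\kappa}^2=o(\sqrt{n/\log n})$, i.e. $(n/\log n)^{1/(2\alpha+1)}=o((n/\log n)^{1/2})$, which requires $1/(2\alpha+1)<1/2$, i.e.\ $\alpha>1/2$; (b) $\tilde{\kappa}^2=O(n\epsilon_n^2/\log n)$, which is actually an identity in this example since $n\epsilon_n^2/\log n=(n/\log n)^{1/(2\alpha+1)}$; and (c) $\hat{\kappa}_{01}^2=o(n)$, i.e.\ $(n/\log n)^{2/(2\alpha+1)}=o(n)$, again requiring $\alpha>1/2$. So the exponent threshold $\alpha>1/2$ imposed in the hypothesis arises precisely as the regime in which the equivalence theorem is applicable.

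The main obstacle here is essentially bookkeeping rather than substance: the heavy lifting is done by Theorem~\ref{thm:contraction} (equivalence) and by the posterior mean rate in Lemma~\ref{lem:minimax.matern}, which itself rests on the non-asymptotic operator-theoretic analysis of Section~\ref{sec:non-asymptotic}. The only delicate point in the present proof is to show that the condition $\alpha>1/2$ is exactly what makes the effective-dimension sums $\tilde{\kappa}^2$ and $\hat{\kappa}_{01}^2$ grow sufficiently slowly; once this is checked, the claim $\|\cdot\|_2$-contraction at rate $\epsilon_n=(\log n/n)^{\alpha/(2\alpha+1)}$ follows immediately from the equivalence, and minimax optimality (up to the logarithm) is a standard comparison with Theorem~2.8 of \cite{tsybakov2008introduction}.
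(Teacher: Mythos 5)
Your proposal is correct and follows essentially the same route as the paper: invoke Lemma~\ref{lem:minimax.matern} for the rate of the posterior mean, compute $\tilde{\kappa}_\alpha^2\asymp\lambda^{-1/(2\alpha)}\asymp(n/\log n)^{1/(2\alpha+1)}$ and $\hat{\kappa}_{\alpha,01}^2\lesssim\lambda^{-1/\alpha}\asymp(n/\log n)^{2/(2\alpha+1)}$, and check the three side conditions of Theorem~\ref{thm:contraction}. One shared caveat: your heuristic $\hat{\kappa}_{01}^2\asymp(i^\star)^2$ silently assumes the tail $\sum_{i>i^\star}\lambda^{-1}i^{1-2\alpha}$ is summable, which holds only for $\alpha>1$ (and indeed Lemma~\ref{lem:differentiability.matern} states the bound $\hat{\kappa}^2_{\alpha,01}\lesssim\lambda^{-1/\alpha}$ only under $\alpha>1$), so the verification for $1/2<\alpha\le 1$ is not actually covered — but the paper's own proof has exactly the same gap.
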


\begin{rem}
Similar results are provided in Theorem 5 of \cite{van2011information}, but the authors did not consider pointwise convergence of posterior mean and used concentration function as the proof technique instead. Indeed, our equivalence theory allows one to derive posterior contraction under the $L_\infty$ norm based on $L_\infty$ convergence rates of $\hat{f}_n$ in the KRR literature. For example, under a very similar setting, Corollary 2.1 in \cite{yang2017frequentist} shows that
\begin{equation}
\|\hat{f}_n-f_0\|_\infty\lesssim \left(\frac{\log n}{n}\right)^{\frac{\alpha}{2\alpha+1}}
\end{equation}
in $\PP_0^{(n)}$-probability with the same choice of regularization parameter $\lambda\asymp ({\log n}/{n})^{\frac{2\alpha}{2\alpha+1}}$. Applying Theorem~\ref{thm:contraction} with $p=\infty$, we obtain the posterior contraction rate at $f_0$ to be $\epsilon_n = \left(\log n/n\right)^{\frac{\alpha}{2\alpha+1}}$.
\end{rem}

\subsection{Application to kernels with exponentially decaying eigenvalues}
In this section, we consider kernels with exponentially decaying eigenvalues on the unit support $\mX=[0,1]$. Suppose the covariance kernel has an eigendecomposition relative to $\PP_X$ such that the eigenvalues satisfy
\begin{equation}\label{eq:exponential.kernel}
\mu_i\asymp e^{-2\gamma i},\quad i\in\mathbb{N},
\end{equation}
for some $\gamma>0$, and the eigenfunctions satisfy Conditions (A1) and (A2). We denote such kernels by $K_\gamma$. The well-known squared exponential kernel can be approximately viewed as one example of $K_\gamma$, which assumes a closed-form eigendecomposition with respect to a Gaussian sampling process on the real line \citep{rasmussen2006,pati2015adaptive}.

We assume $f_0$ belongs to the analytic-type function class $A^\gamma[0,1]$:
\begin{equation}
A^\gamma[0,1]=\left\{f\in L^2_{p_X}[0,1]: \|f\|^2_{A^\gamma[0,1]}= \sum_{i=1}^{\infty}e^{\gamma i}|f_i|<\infty,f_i=\left<f,\phi_i\right>_2\right\}.
\end{equation}

It will be shown in Lemma~\ref{lem:minimax.exp} that under the choice of regularization parameter $\lambda\asymp 1/n$, the posterior mean based on GP priors with $K_\gamma$ converges to the ground truth at the nearly parametric rate under the $L_2$ norm, i.e.,
\begin{equation}
\|\hat{f}_n-f_0\|_2 \lesssim \frac{\log n}{\sqrt{n}}
\end{equation}
in $\PP_0^{(n)}$-probability. Invoking our equivalence theory, we immediately conclude the same nearly parametric $L_2$ contraction rate for estimating analytic-type functions.

\begin{thm}\label{thm:exponential.contraction}
Suppose $f_0\in A^\gamma[0,1]$ for $\gamma>0$. If $K_\gamma$ is used in the GP prior with the regularization parameter $\lambda\asymp 1/n$, then the posterior distribution $\Pi_n(\cdot\mid\data)$ contracts at $f_0$ at the nearly parametric rate $\epsilon_n=\log n/\sqrt{n}$ under the $L_2$ norm.
\end{thm}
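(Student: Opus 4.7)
The plan is to apply the equivalence theorem (Theorem~\ref{thm:contraction}) with $p=2$, which reduces posterior contraction to the $L_2$ convergence rate of the posterior mean plus three spectral conditions on the kernel. The convergence rate of the posterior mean, $\|\hat{f}_n-f_0\|_2 \lesssim \log n/\sqrt{n}$ under $\lambda \asymp 1/n$, is supplied by the forthcoming Lemma~\ref{lem:minimax.exp}. Since any $f_0 \in A^\gamma[0,1]$ satisfies $\|f_0\|_\infty \leq C_\phi \sum_{i=1}^\infty |f_i| \leq C_\phi \sum_{i=1}^\infty e^{-\gamma i}\, e^{\gamma i}|f_i| < \infty$ by Condition (A1), the boundedness requirement on $f_0$ in Theorem~\ref{thm:contraction} is automatic, so everything hinges on verifying the three conditions $\tilde{\kappa}^2 = o(\sqrt{n/\log n})$, $\tilde{\kappa}^2 = O(n\epsilon_n^2/\log n)$, and $\hat{\kappa}_{01}^2 = o(n)$ with $\mu_i \asymp e^{-2\gamma i}$, $\lambda \asymp 1/n$, and $\epsilon_n = \log n/\sqrt{n}$.

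The core computation is to bound the effective-dimension-type sums at an exponentially decaying eigensequence. Split the range at the index $i_\star := (2\gamma)^{-1}\log(1/\lambda) \asymp \log n$ where $\mu_i \asymp \lambda$. For $i \leq i_\star$, the ratio $\mu_i/(\lambda+\mu_i)$ is bounded by $1$, contributing $O(i_\star)$ to the sum; for $i > i_\star$, it is dominated by $\mu_i/\lambda \lesssim e^{-2\gamma(i-i_\star)}$, whose tail is $O(1)$. Hence
\begin{equation}
\tilde{\kappa}^2 \;\lesssim\; \sum_{i=1}^{\infty}\frac{\mu_i}{\lambda+\mu_i} \;\asymp\; \log n.
\end{equation}
The same splitting with an extra factor of $i$ gives $\hat{\kappa}_{01}^2 \lesssim \sum_{i\leq i_\star} i + \sum_{i>i_\star} i\,e^{-2\gamma(i-i_\star)} \asymp (\log n)^2$.

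Plugging these estimates into the hypotheses of Theorem~\ref{thm:contraction}, we verify $\log n = o(\sqrt{n/\log n})$, $n\epsilon_n^2/\log n = \log n \gtrsim \tilde{\kappa}^2$, and $(\log n)^2 = o(n)$, so all three spectral conditions hold. The rate $\epsilon_n = \log n/\sqrt{n}$ clearly satisfies $\epsilon_n \to 0$ and $n\epsilon_n^2 = (\log n)^2 \to \infty$, making it a valid target rate. Invoking the equivalence theorem in conjunction with Lemma~\ref{lem:minimax.exp} then yields the claimed contraction rate.

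The real work in this theorem lies in establishing the posterior mean rate itself (Lemma~\ref{lem:minimax.exp}), which rests on the operator-theoretic and equivalent kernel machinery developed in Section~\ref{sec:non-asymptotic}; once that lemma is granted, the present theorem is a short exercise in controlling the two spectral series for exponentially decaying eigenvalues. There is no genuine obstacle beyond verifying these summations and observing that they are all asymptotically polylogarithmic in $n$, so they comfortably meet the required bounds.
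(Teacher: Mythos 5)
Your proposal is correct and follows essentially the same route as the paper: invoke Lemma~\ref{lem:minimax.exp} for the $L_2$ rate of the posterior mean under $\lambda\asymp 1/n$, then verify the spectral conditions of Theorem~\ref{thm:contraction} by showing $\tilde{\kappa}^2\lesssim\log n$ and $\hat{\kappa}_{01}^2\lesssim(\log n)^2$ via a split of the eigenvalue sums at $i_\star\asymp\log n$ (the paper performs the identical split at $N=\lfloor c_1\log n\rfloor$, reaching the same polylogarithmic bounds). Your explicit check that $f_0\in A^\gamma[0,1]$ is bounded is a small point the paper leaves implicit, but otherwise the two arguments coincide.
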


\section{Equivalence theory for differential operators and nonparametric plug-in property}\label{sec:operator}

In this section, we extend the equivalence of posterior contraction rates and convergence rates of point estimators to function derivatives. As a key implication of this equivalence theory, we establish posterior contraction rates for function derivatives and show that Gaussian process priors enjoy a remarkable \textit{nonparametric plug-in property}. We focus on univariate functions in this section, but our argument can be extended to multivariate functions with mixed-partial derivatives. 

The ``plug-in property'' proposed in \cite{bickel2003nonparametric} refers to the phenomenon that a rate-optimal nonparametric estimator also efficiently estimates some bounded linear functionals. A parallel concept has been studied in the Bayesian paradigm relying on posterior distributions and posterior contraction rates~\citep{rivoirard2012bernstein, castillo2013nonparametric, castillo2015bernstein}. 

We first remark that function derivatives may not fall into the classical plug-in property framework. To see this, let $D_t=f'(t)$ be a functional which maps $f$ to its derivative at any fixed point $t\in[0,1]$. It is easy to see that $D_t$ is a linear functional. However, the following Proposition~\ref{prop:differentiation}~\citep[page 13]{conway1994course} shows that $D_t$ is not bounded.
\begin{prop}\label{prop:differentiation}
Let $t\in[0,1]$ and define $D_t:C^1[0,1]\rightarrow \bbR$ by $D_t(f)=f'(t)$. Then, there is no bounded linear functional on $L^2[0,1]$ that agrees with $D_t$ on $C^1[0,1]$.
\end{prop}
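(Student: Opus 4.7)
The plan is to argue by contradiction using the Riesz representation theorem together with an explicit sequence of $C^1$ functions whose $L^2$ norms vanish while their derivatives at $t$ blow up. The principle is that $D_t$ exploits local regularity of $f$, whereas the $L^2$ norm is insensitive to local oscillation, so any purported extension to $L^2[0,1]$ must fail continuity.

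First, suppose for contradiction that there exists a bounded linear functional $L:L^2[0,1]\to\bbR$ with $L(f)=f'(t)$ for every $f\in C^1[0,1]$. Then for some constant $C=\|L\|<\infty$ we have $|L(f)|\leq C\|f\|_{L^2}$ for all $f\in L^2[0,1]$; equivalently, by Riesz representation there exists $g\in L^2[0,1]$ with $L(f)=\int_0^1 f(x)g(x)\,dx$. Either formulation suffices for what follows.

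Next, I would exhibit a sequence $\{f_n\}\subset C^1[0,1]$ with $\|f_n\|_{L^2[0,1]}\to 0$ but $f_n'(t)\to\infty$. A convenient choice is
\begin{equation*}
f_n(x)=\frac{1}{\sqrt{n}}\,\sin\bigl(n(x-t)\bigr),\qquad x\in[0,1],
\end{equation*}
so that $f_n\in C^1[0,1]$ (in fact $C^\infty$), $\|f_n\|_{L^2[0,1]}^2\leq 1/n\to 0$, while $f_n'(t)=\sqrt{n}\cos(0)=\sqrt{n}\to\infty$. A minor variant with a smooth cut-off can be used if one prefers $f_n$ to vanish near the endpoints, but this is not needed since $C^1[0,1]$ does not impose boundary conditions.

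Finally, applying the purported bound yields $|f_n'(t)|=|L(f_n)|\leq C\|f_n\|_{L^2}\leq C/\sqrt{n}\to 0$, contradicting $f_n'(t)=\sqrt{n}\to\infty$. Hence no bounded linear functional on $L^2[0,1]$ agrees with $D_t$ on $C^1[0,1]$. There is no real obstacle in this argument; the only content is picking an oscillatory sequence that decouples the $L^2$ norm from the pointwise derivative value, and the rest is mechanical. The result underscores why nonparametric plug-in statements for derivatives genuinely go beyond the bounded-linear-functional framework of \cite{bickel2003nonparametric}.
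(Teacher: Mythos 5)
Your proof is correct. Note that the paper itself does not prove Proposition~\ref{prop:differentiation}; it simply cites \cite{conway1994course} (page 13), where the standard argument is exactly the one you give: an oscillatory sequence such as $f_n(x)=n^{-1/2}\sin(n(x-t))$ whose $L^2$ norms vanish while $f_n'(t)=\sqrt{n}$ blows up, contradicting any bound $|D_t(f)|\leq C\|f\|_{L^2}$. Your argument is complete as written (the Riesz representation step is dispensable, as you note), so there is nothing to add.
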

Therefore, it appears difficult to analyze function derivatives evaluated at a fixed point, as existing work on the plug-in property typically assumes the functional to be bounded~\citep{bickel2003nonparametric, castillo2013nonparametric, castillo2015bernstein}. In addition, differential operator that maps a function to its derivative functions have a wide range of applications in statistics~\citep{holsclaw2013gaussian,dai2018derivative}; they point to function-valued functionals, or \textit{nonparametric functionals}, as oppose to real-valued functionals that are considered in the classical plug-in property literature. As such, we generalize the plug-in property for real-valued functionals to function-valued functionals, and term it as nonparametric plug-in property.

Underlying the success of establishing the nonparametric plug-in property of GP priors is an extension of the equivalence theory in the preceding section to differential operators. Before presenting the equivalence theory, we first provide a precise definition of nonparametric plug-in procedures for differential operators. Define the $k$-th differential operator $D^k:C^k[0,1]\rightarrow C[0,1]$ by $D^k(f)=f^{(k)}$. Note that if $K\in C^{2k}([0,1],[0,1])$, the posterior distribution of the derivative $f^{(k)}|\data$, denoted by $\Pi_{n,k}(\cdot\mid\data)$, is also a Gaussian process since differentiation is a linear operator. In particular, $f^{(k)} |\data \sim {\rm GP}(\hat{f}^{(k)}_n, \tilde{V}^k_n)$, where
\begin{align}
\label{eq:f.hat.deriv}\hat{f}^{(k)}_n(x) &= K_{k0}(x, X) [K(X, X) + n \lambda \bI_n]^{-1} Y,\\
\label{eq:deriv.variance}\tilde{V}^k_n(x, x')&= \sigma^2 (n \lambda)^{-1} \left\{K_{kk}(x, x') -  K_{k0}(x, X)[K(X, X) + n \lambda \bI_n]^{-1} K_{0k}(X, x')\right\}.
\end{align}
Here $K_{k0}(x, X)$ is the 1 by $n$ vector $(\frac{\partial^k K(x, X_i)}{\partial x^k})_{i = 1}^n$ and $K_{kk}(x, x'):=\frac{\partial^{2k}K(x,x')}{\partial x^k\partial x'^k}$. Then the nonparametric plug-in procedure for $D^k$ refers to the use of the plug-in posterior measure $\Pi_{n,k}(\cdot\mid\data)$, which also induces its point estimator counterpart $\hat{f}^{(k)}_n$ as the plug-in estimator for $f_0^{(k)}$, and the nonparametric plug-in property refers to the optimality of the contraction rate of $\Pi_{n,k}(\cdot\mid\data)$ contracts at $f_0^{(k)}.$

We further make the following assumption on the differentiability of the eigenfunctions.

\renewcommand{\theassumption}{(B)}
\begin{assumption}
The eigenfunctions $\phi_i\in C^{k}[0,1]$ for all $i\in\mathbb{N}$. For any $x,x'\in[0,1]$, there exists $L_{k,\phi}>0$ such that $|\phi^{(k)}_i(x)-\phi^{(k)}_i(x')| \leq L_{k,\phi} i^{k+1} |x-x'|$ for any $i\in\mathbb{N}$.
\end{assumption}

Under Condition (B), we define high-order analogies of effective dimension for any $k\in\mathbb{N}$:
\begin{equation}\label{eq:high.order.kappa}
\tilde{\kappa}_{kk}^2:=\sup_{x,x'\in[0,1]}\partial_{x}^k\partial_{x'}^k\tilde{K}(x,x')=\sup_{x\in[0,1]}\s\frac{\mu_i}{\lambda+\mu_i}\phi^{(k)}_i(x)^2,
\end{equation}
\begin{equation}\label{eq:kappa.k+1}
\hat{\kappa}_{k+1,k+1}^2:=\s\frac{i^{2k+2}\mu_i}{\lambda+\mu_i},
\end{equation}
which depend on the regularization parameter $\lambda$. The following theorem extends the previous equivalence theory to the nonparametric plug-in procedure for the differential operator. 

\begin{thm}\label{thm:deriv.contraction.l2}
Let $\epsilon_n$ be a sequence such that $\epsilon_n\rightarrow0$ and $n\epsilon_n^2\rightarrow\infty$. Suppose that $K\in C^{2k}([0,1],[0,1])$ and $\lambda$ is chosen such that $\tilde{\kappa}^2=o(\sqrt{n/\log n})$, $\tilde{\kappa}_{kk}^2=O(n\epsilon_n^2/\log n)$ and $\tilde{\kappa}_{kk}\hat{\kappa}_{k+1,k+1}=o(n)$. Under Conditions (A1) and (B), $\epsilon_n$ is a convergence rate of $\hat{f}_n^{(k)}$ in $\PP_0^{(n)}$-probability if and only if it is a posterior contraction rate of $\Pi_{n,k}(\cdot \mid \data)$ at $f_0^{(k)}$; this equivalence holds for any $f_0 \in C^k[0,1]$ under the $L_p$ norm for $p=2$ or $\infty$. 
\end{thm}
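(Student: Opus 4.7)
The plan is to parallel the argument behind Theorem \ref{thm:contraction}, exploiting that Gaussian conjugacy carries over to the $k$-th derivative: under $\Pi_{n,k}(\cdot\mid\data)$, $f^{(k)} - \hat{f}_n^{(k)}$ is a centered Gaussian process with covariance $\tilde{V}_n^k$, and conditional on the design $X$ this posterior Gaussian law is independent of the responses $Y$. Writing $f^{(k)} - f_0^{(k)} = (\hat{f}_n^{(k)} - f_0^{(k)}) + Z$ with $Z \sim \mathrm{GP}(0, \tilde{V}_n^k)$ and applying the triangle inequality, the equivalence reduces to controlling $\|Z\|_p$ under the posterior, in $\PP_0^{(n)}$-probability, for $p \in \{2, \infty\}$. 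The condition $\tilde{\kappa}^2 = o(\sqrt{n/\log n})$ is inherited from the non-asymptotic machinery of Section \ref{sec:non-asymptotic} used to control $\hat{f}_n^{(k)}$ itself, while $\tilde{\kappa}_{kk}^2$ and $\hat{\kappa}_{k+1,k+1}^2$ will govern the posterior fluctuation $\|Z\|_p$.

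For the direction ``contraction rate $\Rightarrow$ convergence rate of $\hat{f}_n^{(k)}$,'' the pointwise identity $\hat{f}_n^{(k)}(x) = \EE_\Pi[f^{(k)}(x)\mid \data]$ gives, via Jensen, $|\hat{f}_n^{(k)}(x) - f_0^{(k)}(x)| \leq \EE_\Pi[|f^{(k)}(x) - f_0^{(k)}(x)| \mid \data]$. Taking the $L_p$ norm on both sides (with Fubini for $p=\infty$, justified by sample-path continuity under Condition (B) and $K\in C^{2k}$) yields $\|\hat{f}_n^{(k)} - f_0^{(k)}\|_p \leq \EE_\Pi[\|f^{(k)} - f_0^{(k)}\|_p \mid \data]$. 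Splitting this expectation over the contraction ball of radius $M_n \epsilon_n$ and its complement exactly as in (\ref{eq:jensen}) produces an $O_{\PP_0^{(n)}}(\epsilon_n)$ bound; the tail contribution is negligible because the posterior law of $\|f^{(k)}\|_p$ has Gaussian tails with scale controlled by $\sqrt{\tilde{\kappa}_{kk}^2/(n\lambda)}$.

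For the reverse direction ``convergence of $\hat{f}_n^{(k)}$ $\Rightarrow$ contraction,'' one must show $\Pi_{n,k}(\|f^{(k)} - \hat{f}_n^{(k)}\|_p \geq M_n\epsilon_n \mid \data) \to 0$ in $\PP_0^{(n)}$-probability. For $p = 2$, compute $\EE_\Pi \|Z\|_2^2 = \int \tilde{V}_n^k(x,x)\, dp_X(x)$ and invoke a differentiated-kernel extension of the equivalent-kernel analysis in Section \ref{sec:non-asymptotic} to obtain $\EE_\Pi \|Z\|_2^2 \lesssim \sigma^2 \tilde{\kappa}_{kk}^2/(n\lambda)$; together with $\tilde{\kappa}_{kk}^2 = O(n\epsilon_n^2/\log n)$, Markov then gives the required tail bound. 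For $p = \infty$, apply the Borell-TIS concentration inequality: with posterior probability $\geq 1-n^{-1}$,
\[
\|Z\|_\infty \leq \EE_\Pi \|Z\|_\infty + c\sqrt{\log n}\, \sup_{x\in[0,1]}\sqrt{\tilde{V}_n^k(x,x)}.
\]
The supremum term is $O(\sqrt{\log n \cdot \tilde{\kappa}_{kk}^2/(n\lambda)}) = O(\epsilon_n)$ by assumption. The mean term is handled via Dudley's entropy integral relative to $d_Z(x,x') := \sqrt{\EE_\Pi (Z(x)-Z(x'))^2}$: Condition (B) and Cauchy-Schwarz yield $d_Z(x,x') \lesssim |x-x'|\,\hat{\kappa}_{k+1,k+1}/\sqrt{n\lambda}$, while the $d_Z$-diameter of $[0,1]$ is $O(\tilde{\kappa}_{kk}/\sqrt{n\lambda})$. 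The resulting covering-number estimate $\log N(\varepsilon, [0,1], d_Z) \lesssim \log(\hat{\kappa}_{k+1,k+1}/(\varepsilon\sqrt{n\lambda}))$ renders Dudley's integral at most $\sqrt{\log n}\cdot \tilde{\kappa}_{kk}/\sqrt{n\lambda}$, which is $O(\epsilon_n)$ under the hypotheses (with $\tilde{\kappa}_{kk}\hat{\kappa}_{k+1,k+1} = o(n)$ keeping the logarithmic factor tame).

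The main obstacle is the sharp non-asymptotic control of the derivative posterior covariance $\tilde{V}_n^k$, specifically the uniform-in-$x$ variance bound $\sup_x \tilde{V}_n^k(x,x) \lesssim \sigma^2 \tilde{\kappa}_{kk}^2/(n\lambda)$ and the corresponding Lipschitz-type bound on $d_Z$ expressed through $\hat{\kappa}_{k+1,k+1}$. These require pushing the equivalent-kernel technique through a $k$-fold differentiated kernel $K_{kk}$ and controlling the remainder between $\tilde{V}_n^k$ and its equivalent-kernel surrogate uniformly over $x$, which is precisely where Condition (B) and the new effective-dimension quantities $\tilde{\kappa}_{kk}^2$ and $\hat{\kappa}_{k+1,k+1}^2$ enter. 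A secondary subtlety is interleaving the posterior-level high-probability events with the $\PP_0^{(n)}$-level events (random design and Gaussian noise) without degrading the target rate $\epsilon_n$.
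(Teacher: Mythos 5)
Your overall architecture is the same as the paper's: decompose $f^{(k)}-f_0^{(k)}$ into the point-estimator error plus a centered posterior Gaussian process $\tilde Z^k\sim\GP(0,\tilde V_n^k)$, use Borell--TIS plus Dudley's entropy integral for $p=\infty$ (and a second-moment/Markov argument for $p=2$), and use the Jensen-type bound (Theorem~8.8 of \cite{ghosal2017fundamentals}) for the converse direction. However, there is a concrete quantitative error that, taken literally, breaks your reverse direction. You repeatedly assert $\sup_x\tilde V_n^k(x,x)\lesssim \sigma^2\tilde{\kappa}_{kk}^2/(n\lambda)$ and $\EE_\Pi\|Z\|_2^2\lesssim\sigma^2\tilde{\kappa}_{kk}^2/(n\lambda)$. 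The correct bound (the paper's Theorem~\ref{thm:equivalent.sigma.deriv}) is $\|\tilde V_n^k\|_\infty\leq 2\sigma^2\tilde{\kappa}_{kk}^2/n$, with no $\lambda^{-1}$: the prior scaling $\sigma^2(n\lambda)^{-1}$ in \eqref{eq:deriv.variance} is cancelled by a factor $\lambda$ coming from the bias of the noise-free KRR representation, since $\|\hat K_{0k,x'}-K_{0k,x'}\|_{\tilde{\bbH}}\leq 2\lambda\tilde{\kappa}_{kk}$ by Corollary~\ref{cor:h.tilde.noiseless}. With your stated bound, the hypothesis $\tilde{\kappa}_{kk}^2=O(n\epsilon_n^2/\log n)$ only yields a fluctuation of order $\epsilon_n/\sqrt{\lambda\log n}$, which diverges relative to $\epsilon_n$ because $\lambda\to0$; so your claims that the supremum term and the Markov bound are ``$O(\epsilon_n)$ by assumption'' do not follow. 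Recognizing this cancellation is not a cosmetic detail --- it is exactly the content of the equivalent-kernel variance analysis that makes the stated hypotheses sufficient.

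Two smaller points. First, your modulus bound $d_Z(x,x')\lesssim|x-x'|\,\hat{\kappa}_{k+1,k+1}/\sqrt{n\lambda}$ differs in form from the paper's $\tilde\rho(x,x')\lesssim\sqrt{\tilde{\kappa}_{kk}\hat{\kappa}_{k+1,k+1}/n}\,|x-x'|^{1/2}$ (a H\"older-$1/2$ bound obtained from \eqref{eq:rho.decomposition.tilde}--\eqref{eq:rho.euclid.tilde}); either form gives a logarithmic covering number, so this is harmless provided the correct $n$-scaling is used and the condition $\tilde{\kappa}_{kk}\hat{\kappa}_{k+1,k+1}=o(n)$ is invoked as you do. Second, your $L_2$ route via $\EE_\Pi\|Z\|_2^2$ and Markov is a mild simplification of the paper's argument (which reduces $p=2$ to the $L_\infty$ concentration via $\|\cdot\|_2\leq\|\cdot\|_\infty$); it works once the variance bound is corrected to $\tilde{\kappa}_{kk}^2/n$.
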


As an example, we consider $f_0\in H^\alpha[0,1]$ and use $K_\alpha$ for the covariance function along with a uniform sampling process. Our non-asymptotic analysis in Section~\ref{sec:derivatives} will show that the derivatives of posterior mean achieve nearly minimax optimal convergence rates. In particular, Theorem~\ref{thm:minimax.diff.matern} states that for any $k<\alpha-1/2$ and $k\in\mathbb{N}$, it holds
\begin{equation}
\|\hat{f}_n^{(k)}-f_0^{(k)}\|_2\lesssim \left(\frac{\log n}{n}\right)^{\frac{\alpha-k}{2\alpha+1}}
\end{equation}
in $\PP_0^{(n)}$-probability, with the regularization parameter $\lambda\asymp ({\log n}/{n})^{\frac{2\alpha}{2\alpha+1}}$. By verifying the conditions in Theorem~\ref{thm:deriv.contraction.l2}, we obtain the nonparametric plug-in property for the differential operator: the posterior distribution contracts at a nearly minimax optimal rate under the $L_2$ norm when estimating the derivatives of functions in the H\"older class \citep{stone1982optimal}.

\begin{thm}\label{thm:contraction.deriv}
Suppose $f_0\in H^\alpha[0,1]$ for $\alpha>1/2$. If $K_\alpha$ is used in the GP prior with the regularization parameter $\lambda\asymp ({\log n}/{n})^{\frac{2\alpha}{2\alpha+1}}$, then for any $k<\alpha-3/2$ and $k\in\mathbb{N}$, the posterior distribution $\Pi_{n,k}(\cdot\mid\data)$ contracts at $f_0^{(k)}$ at the nearly minimax optimal rate $\epsilon_n = (\log n/n)^{\frac{\alpha-k}{2\alpha+1}}$ under the $L_2$ norm.
\end{thm}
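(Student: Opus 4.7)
The plan is to deduce this theorem directly from the equivalence theorem for the differential operator, namely Theorem~\ref{thm:deriv.contraction.l2}, with $\epsilon_n = (\log n/n)^{(\alpha-k)/(2\alpha+1)}$. By that equivalence, it suffices to (i) produce a convergence rate of the posterior mean derivative $\hat f_n^{(k)}$ at the claimed rate under the $L_2$ norm, and (ii) verify the three quantitative hypotheses $\tilde\kappa^2 = o(\sqrt{n/\log n})$, $\tilde\kappa_{kk}^2 = O(n\epsilon_n^2/\log n)$, and $\tilde\kappa_{kk}\hat\kappa_{k+1,k+1} = o(n)$, together with Conditions (A1) and (B) for the Fourier basis.

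First I would invoke the $L_2$ convergence rate of $\hat f_n^{(k)}$ that is stated as Theorem~\ref{thm:minimax.diff.matern} and proved in Section~\ref{sec:non-asymptotic}: under the prescribed choice of $\lambda \asymp (\log n/n)^{2\alpha/(2\alpha+1)}$ one has $\|\hat f_n^{(k)}-f_0^{(k)}\|_2 \lesssim (\log n/n)^{(\alpha-k)/(2\alpha+1)}$ in $\PP_0^{(n)}$-probability. This immediately furnishes the convergence side of the equivalence, so the remaining work is entirely on checking the bookkeeping conditions required by Theorem~\ref{thm:deriv.contraction.l2}. Conditions (A1) and (B) are routine for the Fourier basis $\psi_i$: the $\|\psi_i\|_\infty$ are uniformly bounded by $\sqrt 2$, and $\psi_i^{(k)}$ has modulus of continuity bounded by a constant multiple of $i^{k+1}\|x-x'\|$, so (B) holds with $L_{k,\phi}$ a constant multiple of $(2\pi)^{k+1}$.

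Next I would compute the three effective-dimension quantities using $\mu_i \asymp i^{-2\alpha}$ and $|\psi_i^{(k)}|\lesssim i^k$. Splitting each sum at the threshold $i_* \asymp \lambda^{-1/(2\alpha)}$ (where $\mu_i \asymp \lambda$), one gets $\tilde\kappa^2 \asymp \sum_i \mu_i/(\lambda+\mu_i) \asymp \lambda^{-1/(2\alpha)}$, $\tilde\kappa_{kk}^2 \asymp \sum_i i^{2k}\mu_i/(\lambda+\mu_i) \asymp \lambda^{-(2k+1)/(2\alpha)}$ (bounded by the head of the series because $k < \alpha - 1/2$), and $\hat\kappa_{k+1,k+1}^2 \asymp \lambda^{-(2k+3)/(2\alpha)}$. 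Here the critical tail bound in the last estimate is $\sum_{i>i_*} i^{2k+2}\mu_i/\lambda \asymp \lambda^{-1}\sum_{i>i_*}i^{2k+2-2\alpha}$, which converges precisely under the hypothesis $k < \alpha-3/2$; this is what pins down that exact exponent in the statement. Such computations can be cited from Lemma~\ref{lem:differentiability.matern}, which is exactly their polynomial-eigenvalue content.

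Finally I would substitute $\lambda \asymp (\log n/n)^{2\alpha/(2\alpha+1)}$ to verify each condition. One finds $\tilde\kappa^2 \asymp (n/\log n)^{1/(2\alpha+1)} = o(\sqrt{n/\log n})$ since $\alpha > 1/2$; $\tilde\kappa_{kk}^2 \asymp (n/\log n)^{(2k+1)/(2\alpha+1)}$ and, after a short exponent computation, $n\epsilon_n^2/\log n$ has the same order, giving the second condition; and $\tilde\kappa_{kk}\hat\kappa_{k+1,k+1} \asymp \lambda^{-(k+1)/\alpha} \asymp (n/\log n)^{2(k+1)/(2\alpha+1)}$, which is $o(n)$ as $2(k+1) < 2\alpha+1$. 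All hypotheses of Theorem~\ref{thm:deriv.contraction.l2} are therefore in force, and the equivalence transports the convergence rate of $\hat f_n^{(k)}$ to the desired contraction rate of $\Pi_{n,k}(\cdot\mid\data)$ at $f_0^{(k)}$. The main delicacy, and the one place where the proof does not reduce to plugging in exponents, is the derivation of the sharp $\hat\kappa_{k+1,k+1}^2$ estimate: the constraint $k < \alpha-3/2$ is exactly the convergence threshold of the relevant tail series, and one has to track this carefully rather than rely on a truncation of the sum.
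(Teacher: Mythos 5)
Your proposal is correct and follows essentially the same route as the paper's proof: invoke Theorem~\ref{thm:minimax.diff.matern} for the $L_2$ convergence rate of $\hat f_n^{(k)}$, verify Condition (B) for the Fourier basis, and use Lemma~\ref{lem:differentiability.matern} to compute $\tilde\kappa_\alpha^2$, $\tilde\kappa_{\alpha,kk}^2$, and $\hat\kappa_{\alpha,k+1,k+1}^2$ before substituting $\lambda\asymp(\log n/n)^{2\alpha/(2\alpha+1)}$ into the three hypotheses of Theorem~\ref{thm:deriv.contraction.l2}. Your exponent arithmetic matches the paper's, including the observation that $k<\alpha-3/2$ is exactly what makes the $\hat\kappa_{k+1,k+1}$ bound available.
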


\begin{rem}
Given the smoothness level of the regression function, the rate-optimal estimation of $f_0$ and its derivatives are achieved under the same choice of the regularization parameter $\lambda$ that does not depend on the derivative order, which holds for both convergence of the posterior mean (Theorem~\ref{thm:minimax.diff.matern}) and contraction of the posterior distribution (Theorem~\ref{thm:contraction.deriv}). Therefore, the GP prior enjoys a remarkable property that it automatically adapts to the order of the derivative to be estimated. 
\end{rem}

\section{Unknown error variance}\label{sec:MMLE}

In practice, the error variance $\sigma^2$ in the regression model is unknown and needs to be estimated. Let $\sigma^2_0$ be its true value. \cite{van2009adaptive} proposed a fully Bayesian scheme by endowing the standard error $\sigma$ with a hyperprior, which is supported on a compact interval $[a,b]\subset (0,\infty)$ that contains $\sigma_0$ with a Lebesgue density bounded away from zero. This approach has been followed by many others \citep{bhattacharya2014anisotropic,li2020comparing}. \cite{de2013semiparametric} showed a Bernstein-von Mises theorem for the marginal posterior of $\sigma$, where the prior for $\sigma$ is relaxed to be supported on $(0,\infty)$. In other words, it is possible to simultaneously estimate the regression function at an optimal nonparametric rate and the standard deviation at a parametric rate.

Here we consider an empirical Bayes approach, which is widely used in practice and eliminates the need of jointly sampling $f$ and $\sigma$. Under model \eqref{eq:model}, the marginal likelihood is
\begin{equation}
Y|X\sim N(0, \sigma^2(n\lambda)^{-1}K(X,X)+\sigma^2\bI_n).
\end{equation}
We estimate $\sigma^2$ by its maximum marginal likelihood estimator (MMLE)
\begin{equation}\label{eq:EB.sigma}
\hat{\sigma}^2_n=\lambda Y^T[K(X,X)+n\lambda \bI_n]^{-1}Y.
\end{equation}
Then we endow a new prior $f \sim \GP(0, \hat{\sigma}^2_n (n \lambda)^{-1} K)$ by substituting $\sigma^2$ in the original GP prior with $\hat{\sigma}^2_n$. The induced posterior measure of $f$ is denoted by $\Pi_{n,{\rm EB}}(\cdot\mid\data)$. For the nonparametric plug-in procedure for the $k$-th order derivative, we write $\Pi_{n,k,\text{EB}}(\cdot\mid\data)$ as the posterior distribution.

The next theorem shows that the equivalence theory holds under the empirical Bayes scheme.

\begin{thm}\label{thm:eb.contraction}
If $\lambda$ is chosen such that $n\lambda\rightarrow \infty$, then all results including Theorem~\ref{thm:contraction}, Corollary~\ref{cor:contraction}, Theorem~\ref{thm:matern.contraction}, Theorem~\ref{thm:exponential.contraction}, Theorem~\ref{thm:deriv.contraction.l2}, and Theorem~\ref{thm:contraction.deriv} hold under the empirical Bayes scheme.
\end{thm}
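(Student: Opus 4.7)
The plan hinges on one observation: neither the posterior mean $\hat{f}_n$ nor its derivative counterpart $\hat{f}^{(k)}_n$ depends on $\sigma^2$ (inspect \eqref{eq:f.hat.deriv} and the analogous formula for $\hat{f}_n$). Every point-estimator claim in the cited results is therefore invariant under the empirical Bayes substitution. The only quantities that change are the posterior covariances, which are linear in $\sigma^2$ by \eqref{eq:variance} and \eqref{eq:deriv.variance}: $\hat{V}_{n,\mathrm{EB}} = (\hat{\sigma}^2_n/\sigma^2)\hat{V}_n$ and $\tilde{V}^k_{n,\mathrm{EB}} = (\hat{\sigma}^2_n/\sigma^2)\tilde{V}^k_n$. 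The proof thus reduces to showing that the random rescaling factor $\hat{\sigma}^2_n/\sigma^2$ is bounded away from $0$ and $\infty$ in $\PP_0^{(n)}$-probability, after which the Gaussian tail bounds used in the original proofs carry through up to a multiplicative constant absorbed into the slowly diverging sequence $M_n$.

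To control $\hat{\sigma}^2_n$, substitute $Y = f_0(X) + \varepsilon$ into \eqref{eq:EB.sigma} to obtain
\[
\hat{\sigma}^2_n = \lambda f_0(X)^\top A_n f_0(X) + 2\lambda f_0(X)^\top A_n \varepsilon + \lambda \varepsilon^\top A_n \varepsilon, \qquad A_n := [K(X,X) + n\lambda \bI_n]^{-1}.
\]
The upper bound $\hat{\sigma}^2_n \le n^{-1}\|Y\|^2 \to \|f_0\|_2^2 + \sigma_0^2$ in probability follows immediately from $A_n \preceq (n\lambda)^{-1}\bI_n$ and the law of large numbers. For a lower bound, let $\ell_1 \geq \cdots \geq \ell_n$ be the eigenvalues of $K(X,X)$ and write
\[
\lambda \tr(A_n) \;=\; 1 \;-\; n^{-1}\sum_{i=1}^n \frac{\ell_i}{\ell_i + n\lambda}.
\]
The operator-theoretic machinery of Section~\ref{sec:non-asymptotic} shows that the sum on the right is a consistent estimator of $\sum_{i=1}^{\infty} \mu_i/(\mu_i + \lambda) \lesssim \tilde{\kappa}^2 = o(n)$ under the conditions of the respective theorems; hence $\lambda \tr(A_n) \to 1$. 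A Hanson--Wright concentration inequality then pins the noise quadratic form $\lambda \varepsilon^\top A_n \varepsilon$ to its conditional mean $\sigma_0^2 \lambda \tr(A_n) \to \sigma_0^2$, and Cauchy--Schwarz together with the bound on $\hat{\sigma}^2_n$ already derived controls the cross term. Combined, these yield the sharper statement $\hat{\sigma}^2_n \xrightarrow{\PP_0^{(n)}} \sigma_0^2$, which in particular delivers the desired two-sided bound on $\hat{\sigma}^2_n/\sigma^2$.

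With $\hat{\sigma}^2_n/\sigma^2 = \Theta_P(1)$ established, the transfer of each cited result is routine. On the event $E_n = \{c_1 \leq \hat{\sigma}^2_n/\sigma^2 \leq c_2\}$ with $0 < c_1 < c_2 < \infty$, whose $\PP_0^{(n)}$-probability tends to $1$, the EB posterior is Gaussian with the same mean as the oracle posterior and with covariance within a constant factor of the oracle covariance. All Gaussian tail arguments in the proof of Theorem~\ref{thm:contraction}---Borell--TIS inequalities, bounds on the expected supremum of a centred Gaussian process, and Markov's inequality applied to $\|f-\hat{f}_n\|_p$ under the posterior---depend on the covariance only through its magnitude, so inflating the covariance by a bounded constant leaves the rate $\epsilon_n$ unchanged. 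This gives the ``convergence $\Rightarrow$ contraction'' direction; the converse uses only the posterior mean via \eqref{eq:jensen} and is immediate. The same reasoning applies verbatim to Theorem~\ref{thm:deriv.contraction.l2} after replacing $\hat{V}_n$ with $\tilde{V}^k_n$. Theorems~\ref{thm:matern.contraction}, \ref{thm:exponential.contraction}, and \ref{thm:contraction.deriv} then follow by combining the EB equivalence with the point-estimator rates for $\hat{f}_n$ and $\hat{f}^{(k)}_n$ derived in Section~\ref{sec:non-asymptotic}, which are themselves unchanged by the EB substitution.

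The main obstacle is securing the uniform lower bound $\liminf_n \hat{\sigma}^2_n > 0$ in $\PP_0^{(n)}$-probability, which requires quantitative spectral comparison between $K(X,X)/n$ and the population integral operator so that $\lambda \tr(A_n)$ stays bounded below. Fortunately the empirical-to-population operator estimates developed in Section~\ref{sec:non-asymptotic} supply exactly this comparison, so the proof does not call on tools beyond what the paper already constructs.
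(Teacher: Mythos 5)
Your proposal is correct in outline and matches the paper's strategy: establish consistency of the MMLE $\hat{\sigma}^2_n$, then transfer every Gaussian tail bound by noting that the EB posterior has the same mean as the oracle posterior and a covariance rescaled by $\hat{\sigma}^2_n/\sigma^2=\Theta_P(1)$ (the paper phrases this as uniformity over a shrinking neighborhood $\mathcal{B}_n$ of $\sigma_0^2$); the converse direction uses only the posterior mean, exactly as you say. The difference lies in how consistency is obtained. The paper conditions on $X$ and applies the exact mean and variance formulas for Gaussian quadratic forms, getting $|\EE(\hat{\sigma}^2_n)-\sigma_0^2|\lesssim (n\lambda)^{-1}$ and $\Var(\hat{\sigma}^2_n)\lesssim n^{-1}$, and finishes with Chebyshev. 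You instead decompose $\hat{\sigma}^2_n$ into bias, cross, and noise terms and invoke Hanson--Wright; that route works, but the step you flag as the ``main obstacle''---proving $\lambda\tr(A_n)\to 1$ via an empirical-to-population spectral comparison---is both unnecessary and not actually supplied by Section~\ref{sec:non-asymptotic} (the operator bounds there control $L_{K,X}f-L_Kf$ in RKHS norm for fixed $f$, not the empirical effective dimension). The needed fact is elementary and deterministic:
\begin{equation}
1-\lambda\tr(A_n)=n^{-1}\sum_{i=1}^n \frac{\ell_i}{\ell_i+n\lambda}\leq \frac{\tr(K(X,X))}{n^2\lambda}\leq \frac{\sup_{\bx\in\mX}K(\bx,\bx)}{n\lambda}\rightarrow 0,
\end{equation}
using only continuity of $K$ on the compact $\mX$ and the hypothesis $n\lambda\to\infty$; this is precisely the paper's route and explains why $n\lambda\to\infty$ is the only additional condition in the theorem.
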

\begin{rem}
The additional condition $n\lambda\rightarrow\infty$ is not restrictive. For instance, it is satisfied in the example specialized to $K_\alpha$ in Section~\ref{sec:Matern} by noting that $\lambda\asymp ({\log n}/{n})^{\frac{2\alpha}{2\alpha+1}}$.
\end{rem}

\section{Non-asymptotic analysis based on equivalent kernel}
\label{sec:non-asymptotic}
In this section, we present a non-asymptotic analysis for an extensive list of key quantities in GP regression, encompassing the posterior mean, the posterior variance, and their derivative counterparts, which is based on an operator-theoretic approach~\citep{smale2005shannon,smale2007learning} and the equivalent kernel technique. Our non-asymptotic analysis applies to general Mercer kernels with uniformly bounded eigenfunctions (Condition (A1)). We substantially extend the analysis in our earlier work~\citep[Technical Report,][]{liu2020non} by deriving a range of either new or sharper error bounds, which are critical to conclude minimax optimality and the equivalence theory for the regression function and differential operators. As a byproduct, our analysis provides a series of convergence rates for point estimators of the regression function and its derivatives, and characterizes uncertainty in GP regression by bounding its posterior variances. This non-asymptotic framework may also be of independent interest.

We begin with introducing operator-theoretic preliminaries and derive error bounds for the posterior mean estimator in Section~\ref{sec:preliminaries}. 
\subsection{Error bounds for Mercer kernels} \label{sec:preliminaries}
For any $f\in\Ltwo$, we define an integral operator $L_K: \Ltwo \rightarrow \bbH$: 
\begin{equation} \label{eq:int.operator}
L_K(f)(\bx) = \int_{\mX} K(\bx, \bx') f(\bx') d\PP_X(\bx'), \quad \bx \in \mX. 
\end{equation}
The compact, positive definite and self-adjoint operator $L_K$ and the Mercer kernel $K$ can be uniquely determined by each other. The eigenvalues $\{\mu_i\}_{i=1}^\infty$ and eigenfunctions $\{\phi_i\}_{i=1}^\infty$ of $K$ in~\eqref{eq:eigendecomposition} provide an eigendecomposition of $L_K$ as
\begin{equation}
L_K\phi_i=\mu_i\phi_i,\quad i\in \mathbb{N}.
\end{equation}

We introduce the sample analog $L_{K, X}: \Ltwo \rightarrow \bbH$ by 
\begin{equation}\label{eq:sample.analog}
L_{K, X}(f) = \frac{1}{n} \sum_{i = 1}^n f(X_i) K_{X_i}, 
\end{equation}
where $K_{X_i}(\cdot) := K(X_i, \cdot)$. Note that the operator $L_{K, X}$ and the matrix $K(X,X)/n$ share the same eigenvalues. We remark that $L_K$ and $L_{K, X}$ can also be defined on subspaces of $\Ltwo$ such as $C(\mX)$, $C^k[0,1]$, and $H^\alpha[0,1]$.

Let $I$ be the identity operator. We approximate $f_0$ by a function in $\bbH$ defined as follows:
\begin{equation}\label{eq:flambda}
f_{\lambda} = (L_K + \lambda I)^{-1}L_K f_{0},
\end{equation}
which minimizes $ \|f - f_{0}\|_{2}^2 + \lambda \|f\|^2_{\bbH}$ subject to $f \in \bbH.$

The equivalent kernel $\tilde{K}$ defined in \eqref{eq:equivalent.kernel} provides an alternative interpretation of the proximate function $f_\lambda$. Let the integral operator $L_{\tilde{K}}$ be the counterpart of $L_K$ induced by $\tilde{K}$, then we have $f_\lambda = L_{\tilde{K}}f_0$. Similarly, we define the sample analog of $L_{\tilde{K}}$ by
\begin{equation}
L_{\tilde{K}, X}(f) = \frac{1}{n} \sum_{i = 1}^n f(X_i) \tilde{K}_{X_i}, 
\end{equation}
which is also a compact, positive definite, and self-adjoint operator.

A RKHS bound for $L_{K,X}(f)-L_K(f)$ has been available in \cite{smale2005shannon, smale2007learning} based on an integral operator approach. However, such an analysis usually requires the response $Y$ to be uniformly bounded, which excludes nonparametric regression that assumes Gaussian error. \cite{liu2020non} adopted the operator-theoretic approach and extended it to unbounded sampling process and general Mercer kernels, which provided the following non-asymptotic RKHS bound for $\hat{f}_n - f_{\lambda}$ using Hanson-Wright inequality \citep{rudelson2013hanson} and equivalent kernels. For ease of reference we provide it below; see Theorem 1 in \cite{liu2020non} for a proof.

\begin{thm}[Theorem 1 in \cite{liu2020non}]\label{thm:equivalent.bound}
Under Condition (A1), it holds with $\PP_0^{(n)}$-probability at least $1 - n^{-10}$  that
\begin{equation}
\|\hat{f}_n-f_\lambda\|_{\tilde{\bbH}}\leq \frac{\tilde{\kappa}^{-1}C(n,\tilde{\kappa})}{1-C(n,\tilde{\kappa})}\|f_\lambda-f_0\|_\infty+\frac{1}{1-C(n,\tilde{\kappa})}\frac{4\tilde{\kappa}\sigma\sqrt{20\log n}}{\sqrt{n}},
\end{equation}
where $C(n,\tilde{\kappa})=\frac{\tilde{\kappa}^2 \sqrt{20\log n}}{\sqrt{n}} \left(4 + \frac{4 \tilde{\kappa}\sqrt{20\log n}}{3 \sqrt{n}} \right).$
\end{thm}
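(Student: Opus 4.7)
The plan is to combine the KRR normal equation with the equivalent-kernel geometry. From $(L_{K,X}+\lambda I)\hat f_n = n^{-1}\sum_i Y_iK_{X_i}$ and $(L_K+\lambda I)f_\lambda = L_Kf_0$, substituting $Y_i=f_0(X_i)+\varepsilon_i$ and using $\lambda f_\lambda = L_K(f_0-f_\lambda)$ yields the exact identity
\begin{equation*}
(L_{K,X}+\lambda I)(\hat f_n-f_\lambda) \;=\; (L_{K,X}-L_K)(f_0-f_\lambda) \;+\; \tfrac{1}{n}\sum_{i=1}^n \varepsilon_i K_{X_i}.
\end{equation*}
I would then apply $(L_K+\lambda I)^{-1}$ to both sides and use the key Mercer identity $(L_K+\lambda I)^{-1}K_{X_i}=\tilde K_{X_i}$ (diagonalize in $\{\phi_j\}$ and use $\mu_j/(\lambda+\mu_j)=\nu_j$), which converts every occurrence of $K_{X_i}$ into $\tilde K_{X_i}$. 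Writing $r=\hat f_n-f_\lambda$, $h=f_0-f_\lambda$, and $\Delta_n:=\tilde L_{K,X}-L_{\tilde K}$ for the centred sample operator built from $\tilde K$, the identity becomes
\begin{equation*}
(I+\Delta_n)\,r \;=\; \Delta_n h \;+\; \tfrac{1}{n}\sum_{i=1}^n \varepsilon_i\,\tilde K_{X_i}.
\end{equation*}
This is the clean equation to analyse in $\tilde{\bbH}$, whose reproducing property $f(X_i)=\langle f,\tilde K_{X_i}\rangle_{\tilde{\bbH}}$ and the bound $\|\cdot\|_\infty\le\tilde\kappa\,\|\cdot\|_{\tilde{\bbH}}$ from \eqref{eq:sup.RKHS.norm} are the tools that deliver the $\tilde\kappa$-scales.

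\textbf{Resolvent control and the two residuals.} I would invert $I+\Delta_n$ by a Neumann expansion. Since $\Delta_n$ is a centred empirical average of $n$ i.i.d.\ rank-one self-adjoint operators $\tilde K_{X_i}\otimes_{\tilde{\bbH}}\tilde K_{X_i}$, each with $\tilde{\bbH}$-operator norm bounded by $\tilde\kappa^2=\sup_x\tilde K(x,x)$ and comparable variance proxy, an operator Bernstein inequality in $\tilde{\bbH}$ produces $\|\Delta_n\|_{\tilde{\bbH}\to\tilde{\bbH}}\le C(n,\tilde\kappa)$ with exactly the stated expression, on an event of probability at least $1-n^{-10}$. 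The Neumann series then gives $\|r\|_{\tilde{\bbH}}\le(1-C(n,\tilde\kappa))^{-1}\bigl(\|\Delta_n h\|_{\tilde{\bbH}}+\|n^{-1}\sum_i\varepsilon_i\tilde K_{X_i}\|_{\tilde{\bbH}}\bigr)$. The first residual is a centred average of $\tilde{\bbH}$-valued vectors $h(X_i)\tilde K_{X_i}$, each of norm at most $\|h\|_\infty\tilde\kappa$; a vector Bernstein inequality in $\tilde{\bbH}$ gives $\|\Delta_n h\|_{\tilde{\bbH}}\lesssim\tilde\kappa\|f_\lambda-f_0\|_\infty\sqrt{\log n/n}$, which after matching constants rewrites as $\tilde\kappa^{-1}C(n,\tilde\kappa)\|f_\lambda-f_0\|_\infty$ and produces the first summand of the claim. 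For the stochastic residual, conditioning on $X$,
\begin{equation*}
\bigl\|\tfrac{1}{n}\sum_i\varepsilon_i\tilde K_{X_i}\bigr\|_{\tilde{\bbH}}^2 \;=\; \tfrac{1}{n^2}\,\varepsilon^\top \tilde K(X,X)\,\varepsilon,
\end{equation*}
a Gaussian quadratic form whose coefficient matrix has trace and operator norm bounded by $n\tilde\kappa^2$; the Hanson--Wright inequality \citep{rudelson2013hanson} delivers the bound $4\tilde\kappa\sigma\sqrt{20\log n/n}$ with probability at least $1-n^{-10}$, producing the second summand. A union bound preserves the final $1-n^{-10}$ probability.

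\textbf{Main obstacle.} The delicate step is the operator Bernstein bound on $\Delta_n$ in the $\tilde{\bbH}\!\to\!\tilde{\bbH}$ norm, because this is what replaces the possibly divergent scale $\kappa^2=\sup_xK(x,x)$ by the effective-dimension scale $\tilde\kappa^2=\sum_i\mu_i/(\lambda+\mu_i)$. Getting the sharp $\tilde\kappa^2$ dependence requires exploiting the rank-one tensor structure $\tilde K_{X_i}\otimes_{\tilde{\bbH}}\tilde K_{X_i}$ and the Mercer decomposition of $\tilde K$ directly inside $\tilde{\bbH}$, rather than passing through cruder Hilbert--Schmidt surrogates; this sharpness is exactly what the minimax-rate applications of the preceding sections demand. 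Once this operator concentration is secured, the stated inequality follows on the intersection of the three Bernstein/Hanson--Wright events.
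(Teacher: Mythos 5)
Your decomposition $(I+\Delta_n)(\hat f_n-f_\lambda)=\Delta_n(f_0-f_\lambda)+n^{-1}\sum_i\varepsilon_i\tilde K_{X_i}$ via the normal equations and the identity $(L_K+\lambda I)^{-1}K_{X_i}=\tilde K_{X_i}$, followed by a Neumann inversion under an operator-concentration event for $\Delta_n$, a Hilbert-space Bernstein bound for $\Delta_n(f_0-f_\lambda)$, and Hanson--Wright for the Gaussian quadratic form $n^{-2}\varepsilon^\top\tilde K(X,X)\varepsilon$, is precisely the operator-theoretic/equivalent-kernel route the paper attributes to Theorem~1 of \cite{liu2020non} (the proof is not reproduced in this paper), and the three events you isolate map exactly onto the three constituents $C(n,\tilde\kappa)$, $\tilde\kappa^{-1}C(n,\tilde\kappa)\|f_\lambda-f_0\|_\infty$, and $4\tilde\kappa\sigma\sqrt{20\log n}/\sqrt{n}$ of the stated bound. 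This is essentially the same approach, correctly executed at the level of detail given.
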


The theorem above is applicable for noiseless observations when $\sigma = 0$ and $Y=f_0(X) := (f_0(X_1), \ldots, f_0(X_n))^T$. This noiseless case, as will be shown later, is closely related to uncertainty quantification of GP priors, which in turn plays a critical role in establishing the equivalence theory. We next present a corollary based on the noise-free version of Theorem~\ref{thm:equivalent.bound} that zeroes out the second term in the error bound. 
\begin{cor}\label{cor:h.tilde.noiseless}
Suppose the observations are noiseless. Under Condition (A1), by choosing $\lambda$ such that $\tilde{\kappa}^2=o(\sqrt{n/\log n})$, it holds with $\PP_0^{(n)}$-probability at least $1 - n^{-10}$ that
\begin{equation}
\|\hat{f}_n-f_0\|_{\tilde{\bbH}}\leq 2\|f_\lambda-f_0\|_{\tilde{\bbH}},
\end{equation}
and
\begin{equation}
\|\hat{f}_n-f_0\|_{\infty}\leq 2\|f_\lambda-f_0\|_{\infty}.
\end{equation}
\end{cor}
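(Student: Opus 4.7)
The plan is to derive this corollary as a direct consequence of Theorem~\ref{thm:equivalent.bound} in the noiseless regime, followed by a single application of the embedding \eqref{eq:sup.RKHS.norm} and the triangle inequality. First I would instantiate Theorem~\ref{thm:equivalent.bound} with $\sigma = 0$, so that the second (noise) term vanishes; this yields, with $\PP_0^{(n)}$-probability at least $1-n^{-10}$,
\begin{equation}
\|\hat{f}_n - f_\lambda\|_{\tilde{\bbH}} \;\leq\; \frac{\tilde{\kappa}^{-1} C(n,\tilde{\kappa})}{1 - C(n,\tilde{\kappa})}\, \|f_\lambda - f_0\|_\infty.
\end{equation}
The assumption $\tilde{\kappa}^2 = o(\sqrt{n/\log n})$ forces $C(n,\tilde{\kappa}) = O(\tilde{\kappa}^2\sqrt{\log n/n}) = o(1)$, so for all $n$ sufficiently large the prefactor $C(n,\tilde{\kappa})/(1-C(n,\tilde{\kappa}))$ is bounded by $1$, and the event above holds.

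Next I would deduce the $L_\infty$ bound. Applying \eqref{eq:sup.RKHS.norm} to $\hat{f}_n - f_\lambda \in \tilde{\bbH}$ and multiplying the displayed inequality by $\tilde{\kappa}$ cancels the $\tilde{\kappa}^{-1}$ factor, giving
\begin{equation}
\|\hat{f}_n - f_\lambda\|_\infty \;\leq\; \tilde{\kappa}\|\hat{f}_n - f_\lambda\|_{\tilde{\bbH}} \;\leq\; \frac{C(n,\tilde{\kappa})}{1-C(n,\tilde{\kappa})}\,\|f_\lambda - f_0\|_\infty \;\leq\; \|f_\lambda - f_0\|_\infty
\end{equation}
for $n$ large. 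The triangle inequality then yields $\|\hat{f}_n - f_0\|_\infty \leq 2\|f_\lambda - f_0\|_\infty$.

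For the $\tilde{\bbH}$ bound, I would convert the $\|f_\lambda - f_0\|_\infty$ on the right-hand side of the original inequality into an $\tilde{\bbH}$-norm. If $f_0 \in \tilde{\bbH}$ (otherwise the right-hand side is $+\infty$ and the bound is vacuous), then applying \eqref{eq:sup.RKHS.norm} to $f_\lambda - f_0$ gives $\|f_\lambda - f_0\|_\infty \leq \tilde{\kappa}\|f_\lambda - f_0\|_{\tilde{\bbH}}$, so the $\tilde{\kappa}^{-1}$ once again cancels and one obtains $\|\hat{f}_n - f_\lambda\|_{\tilde{\bbH}} \leq \|f_\lambda - f_0\|_{\tilde{\bbH}}$ for $n$ large. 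A second triangle inequality delivers $\|\hat{f}_n - f_0\|_{\tilde{\bbH}} \leq 2\|f_\lambda - f_0\|_{\tilde{\bbH}}$.

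This proof is short because the heavy lifting has already been done in Theorem~\ref{thm:equivalent.bound}. The only genuine point to watch is that the prefactor $\tilde{\kappa}^{-1} C(n,\tilde{\kappa})/(1-C(n,\tilde{\kappa}))$ must be controlled by a constant after the $\tilde{\kappa}$ coming from the embedding \eqref{eq:sup.RKHS.norm} is absorbed; this is precisely why the bandwidth-type condition $\tilde{\kappa}^2 = o(\sqrt{n/\log n})$ (and nothing stronger) is required, and it is the only step that is not purely algebraic.
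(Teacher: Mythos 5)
Your proposal is correct and follows essentially the same route as the paper's proof: instantiate Theorem~\ref{thm:equivalent.bound} with $\sigma=0$, use $\tilde{\kappa}^2=o(\sqrt{n/\log n})$ to bound $C(n,\tilde{\kappa})/(1-C(n,\tilde{\kappa}))$ by $1$, absorb the $\tilde{\kappa}^{-1}$ via the embedding \eqref{eq:sup.RKHS.norm}, and finish with the triangle inequality in each norm. Your explicit remark that the $\tilde{\bbH}$-bound is vacuous unless $f_0\in\tilde{\bbH}$ is a small but legitimate point the paper leaves implicit.
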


We remark that Theorem~\ref{thm:equivalent.bound} and Corollary~\ref{cor:h.tilde.noiseless} do not require the strong condition $f_0\in\bbH$ that is sometimes assumed in the existing work.

We next consider two function spaces $H^\alpha[0,1]$ and $A^\gamma[0,1]$ and derive the error bounds for $\hat{f}_n$, which are nearly minimax optimal.

\begin{lem}[Theorem 9 in \cite{liu2020non}]\label{lem:minimax.matern}
Suppose $f_0\in H^\alpha[0,1]$ for $\alpha>1/2$. If $K_\alpha$ is used in the GP prior, then it holds with $\PP_0^{(n)}$-probability at least $1-n^{-10}$ that
\begin{equation}
\|\hat{f}_n-f_0\|_2 \lesssim \left(\frac{\log n}{n}\right)^{\frac{\alpha}{2\alpha+1}},
\end{equation}
with the corresponding choice of regularization parameter $\lambda\asymp ({\log n}/{n})^{\frac{2\alpha}{2\alpha+1}}$.
\end{lem}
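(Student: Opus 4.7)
The plan is to carry out a bias–variance style decomposition around the deterministic proxy $f_\lambda=(L_K+\lambda I)^{-1}L_Kf_0$ introduced in~\eqref{eq:flambda}, writing
\begin{equation*}
\|\hat f_n-f_0\|_2\le \|\hat f_n-f_\lambda\|_2+\|f_\lambda-f_0\|_2\le \|\hat f_n-f_\lambda\|_{\tilde\bbH}+\|f_\lambda-f_0\|_2,
\end{equation*}
where the second inequality uses $\nu_i=\mu_i/(\lambda+\mu_i)\le 1$ so that the $L_2$ norm is dominated by the equivalent RKHS norm. The stochastic term is controlled by Theorem~\ref{thm:equivalent.bound}, while the approximation (bias) term is handled by the Fourier/Mercer expansion of $f_0$ on the trigonometric basis $\{\psi_i\}$.

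For the bias, I expand $f_\lambda-f_0=-\sum_i \tfrac{\lambda}{\lambda+\mu_i} f_{0,i}\psi_i$ and exploit the interpolation inequality $\tfrac{\lambda}{\lambda+\mu_i}\le \lambda^{1/2}\mu_i^{-1/2}\asymp \lambda^{1/2} i^{\alpha}$. Combined with the definition of $\|f_0\|_{H^\alpha[0,1]}^2=\sum_i i^{\alpha}|f_{0,i}|$, the triangle inequality in $\ell^1$ (which majorises the $\ell^2$-based $L_2$ norm) yields both $\|f_\lambda-f_0\|_2\le \lambda^{1/2}\|f_0\|_{H^\alpha}^2$ and, by $\|\psi_i\|_\infty\le\sqrt 2$, an analogous $L_\infty$ bound $\|f_\lambda-f_0\|_\infty\lesssim \lambda^{1/2}$. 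With the scheduled $\lambda\asymp(\log n/n)^{2\alpha/(2\alpha+1)}$, both are of order $(\log n/n)^{\alpha/(2\alpha+1)}$.

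For the effective dimension, polynomial decay $\mu_i\asymp i^{-2\alpha}$ and a split of the series at $i^\star\asymp \lambda^{-1/(2\alpha)}$ give $\tilde\kappa^2\lesssim \sum_i (1+\lambda i^{2\alpha})^{-1}\asymp \lambda^{-1/(2\alpha)}\asymp (n/\log n)^{1/(2\alpha+1)}$. The hypothesis $\alpha>1/2$ is exactly what is needed to get $\tilde\kappa^2=o(\sqrt{n/\log n})$ and therefore $C(n,\tilde\kappa)\to 0$, so that the prefactors $\tfrac{\tilde\kappa^{-1}C(n,\tilde\kappa)}{1-C(n,\tilde\kappa)}$ and $\tfrac{1}{1-C(n,\tilde\kappa)}$ in Theorem~\ref{thm:equivalent.bound} are $O(\tilde\kappa\sqrt{\log n/n})$ and $O(1)$ respectively. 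The resulting two contributions to $\|\hat f_n-f_\lambda\|_{\tilde\bbH}$ are: (i) $\tilde\kappa\sqrt{\log n/n}\cdot\|f_\lambda-f_0\|_\infty\lesssim \tilde\kappa\sqrt{\log n/n}\cdot\lambda^{1/2}\asymp (\log n/n)^{2\alpha/(2\alpha+1)}$, a negligible remainder; and (ii) $\tilde\kappa\sqrt{\log n/n}\asymp (\log n/n)^{\alpha/(2\alpha+1)}$, the dominant stochastic term.

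Combining the two matching bounds finishes the proof, with the high-probability statement inherited directly from Theorem~\ref{thm:equivalent.bound}. The main obstacle, as I see it, is not conceptual but bookkeeping: one must be careful that passing from the $\tilde\bbH$-bound to the $L_2$-bound does not inflate the rate (here saved by the trivial $\nu_i\le 1$), and one must verify that all higher-order terms involving $C(n,\tilde\kappa)$ and the bias's $L_\infty$ norm are genuinely of smaller order than the leading $(\log n/n)^{\alpha/(2\alpha+1)}$. The delicate balance $\lambda\asymp(\log n/n)^{2\alpha/(2\alpha+1)}$ is precisely the one that equates $\lambda^{1/2}$ with $\tilde\kappa\sqrt{\log n/n}\asymp \lambda^{-1/(4\alpha)}\sqrt{\log n/n}$, which is the standard minimax balance up to the $\log n$ factor inherited from the Hanson–Wright based bound in Theorem~\ref{thm:equivalent.bound}.
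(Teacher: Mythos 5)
Your argument is correct and follows essentially the same route the paper takes for this result (which it imports from Theorem~9 of \cite{liu2020non} and reproduces in spirit in the proofs of Lemma~\ref{lem:minimax.exp} and the $k$th-derivative analogue Theorem~\ref{thm:minimax.diff.matern}): decompose around $f_\lambda$, dominate $\|\cdot\|_2$ by $\|\cdot\|_{\tilde\bbH}$, control the stochastic part via Theorem~\ref{thm:equivalent.bound} with $\tilde\kappa^2\asymp\lambda^{-1/(2\alpha)}$, bound the bias by $\lambda^{1/2}$ through the Fourier expansion and the AM--GM step $\lambda/(\lambda+\mu_i)\le\sqrt{\lambda/\mu_i}$, and balance at $\lambda\asymp(\log n/n)^{2\alpha/(2\alpha+1)}$. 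All the exponent bookkeeping, including the verification that the cross term $\tilde\kappa\sqrt{\log n/n}\cdot\lambda^{1/2}$ is of strictly smaller order, checks out.
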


\begin{lem}\label{lem:minimax.exp}
Suppose $f_0\in A^\gamma[0,1]$ for $\gamma>0$. If $K_\gamma$ is used in the GP prior, then it holds with $\PP_0^{(n)}$-probability at least $1-n^{-10}$ that
\begin{equation}
\|\hat{f}_n-f_0\|_2 \lesssim \frac{\log n}{\sqrt{n}},
\end{equation}
with the corresponding choice of regularization parameter $\lambda\asymp 1/n$.
\end{lem}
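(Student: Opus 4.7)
The plan is to parallel the proof of Lemma~\ref{lem:minimax.matern}: I would decompose the error via the proximate function $f_\lambda = L_{\tilde{K}} f_0$ from \eqref{eq:flambda} and the triangle inequality
\[
\|\hat{f}_n-f_0\|_2 \le \|\hat{f}_n-f_\lambda\|_2 + \|f_\lambda-f_0\|_2,
\]
and bound each piece using the exponential eigenstructure of $K_\gamma$ together with Theorem~\ref{thm:equivalent.bound}. A useful preliminary observation is that, by the definition of $\|\cdot\|_{A^\gamma[0,1]}$, each summand is dominated by the whole sum, so $|f_i| \le \|f_0\|_{A^\gamma[0,1]}^2\, e^{-\gamma i}$ and hence $f_i^2 \lesssim e^{-2\gamma i} \asymp \mu_i$.

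For the approximation error, the identity $f_\lambda - f_0 = -\sum_i \tfrac{\lambda}{\lambda+\mu_i} f_i \phi_i$ gives $\|f_\lambda - f_0\|_2^2 = \sum_i \tfrac{\lambda^2}{(\lambda+\mu_i)^2} f_i^2$ and, under Condition (A1), $\|f_\lambda - f_0\|_\infty \le C_\phi \sum_i \tfrac{\lambda}{\lambda+\mu_i}|f_i|$. I would split both sums at the threshold $i^\ast \asymp (\log n)/(2\gamma)$, which is the index where $\mu_i$ crosses $\lambda \asymp 1/n$: on $i \le i^\ast$ one has $\lambda/(\lambda+\mu_i) \lesssim \lambda/\mu_i \asymp e^{2\gamma i}/n$, and on $i > i^\ast$ the ratio is at most one. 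Plugging in $|f_i| \lesssim e^{-\gamma i}$ and summing the resulting geometric series on each side of $i^\ast$ yields $\|f_\lambda - f_0\|_2 \lesssim 1/\sqrt{n}$ and $\|f_\lambda - f_0\|_\infty \lesssim 1/\sqrt{n}$.

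The same threshold split applied to $\tilde{\kappa}^2 \lesssim \sum_i \mu_i/(\lambda+\mu_i)$ shows that the $i \le i^\ast$ contribution is $O(\log n)$ and the $i > i^\ast$ contribution is $O(1)$, so $\tilde{\kappa}^2 \lesssim \log n$. Since $(\log n)^{3/2} = o(\sqrt{n})$, this gives $C(n,\tilde{\kappa}) = o(1)$, and Theorem~\ref{thm:equivalent.bound} then produces $\|\hat{f}_n-f_\lambda\|_{\tilde{\bbH}} \lesssim \tilde{\kappa}\sqrt{\log n/n} \lesssim (\log n)/\sqrt{n}$ on an event of $\PP_0^{(n)}$-probability at least $1-n^{-10}$; the first term of Theorem~\ref{thm:equivalent.bound} scales as $\tilde{\kappa}^{-1} C(n,\tilde{\kappa}) \|f_\lambda-f_0\|_\infty \lesssim (\log n)/n$ and is absorbed. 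Combining with $\|\cdot\|_2 \le \|\cdot\|_{\tilde{\bbH}}$ and the approximation bound delivers the claimed rate $(\log n)/\sqrt{n}$.

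The main obstacle is simply the bookkeeping of the two geometric series against $\lambda \asymp 1/n$: unlike the H\"older case where Lemma~\ref{lem:differentiability.matern} packages these identities, here all estimates must be run directly from the exponential structure. No new conceptual ingredient beyond Theorem~\ref{thm:equivalent.bound} is required; the fast exponential decay forces $i^\ast$ to be logarithmic, and it is precisely this logarithmic effective-dimension scaling that produces the nearly parametric rate.
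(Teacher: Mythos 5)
Your proposal is correct and follows essentially the same route as the paper: decompose through $f_\lambda = L_{\tilde K}f_0$, show $\|f_\lambda - f_0\|_\infty \lesssim 1/\sqrt{n}$ and $\tilde\kappa^2 \lesssim \log n$ from the exponential eigenstructure, and invoke Theorem~\ref{thm:equivalent.bound} (the paper cites the $L_2$ variant from \cite{liu2020non}, while you pass through $\|\cdot\|_2 \le \|\cdot\|_{\tilde{\bbH}}$, which is equivalent). The only cosmetic difference is that the paper keeps $\lambda$ general — bounding $\|f_\lambda-f_0\|_\infty \lesssim \sqrt{\lambda}$ by an AM--GM identity and $\tilde\kappa_\gamma^2 \lesssim -\log\lambda$ by an integral comparison, then optimizing — whereas you fix $\lambda \asymp 1/n$ upfront and run geometric-series splits at $i^\ast \asymp (\log n)/(2\gamma)$; both computations are valid and yield the same bounds.
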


We next turn to deriving non-asymptotic bounds for derivatives and posterior variances, which are central to the nonparametric plug-in property of GP priors and the equivalence theory.

\subsection{Convergence rates of derivatives of posterior mean}\label{sec:derivatives}
In this section, we derive error bounds for the derivatives of the posterior mean estimator, which serve as a major component in establishing the aforementioned nonparametric plug-in property. Here we consider $f_0\in H^\alpha[0,1]$ and use $K_\alpha$ in the GP prior with a uniform sampling process as in Section~\ref{sec:operator} to ease presentation.

Let the higher-order analog of the effective dimension for $K_\alpha$ be $\tilde{\kappa}_{\alpha,mm}^2$, $m\in\mathbb{N}_0$, where the subscript $\alpha$ emphasizes the use of $K_{\alpha}$ compared to the general definition in \eqref{eq:high.order.kappa}. Note that we allow $m=0$ in $\tilde{\kappa}_{\alpha,mm}$, which corresponds to $\tilde{\kappa}_{\alpha,00}^2=\tilde{\kappa}_{\alpha}^2={\sup_{x\in[0,1]}\tilde{K}_\alpha(x,x)}$. Likewise, all results in this subsection encompass $m = 0$ as a special case; hence, we unify the study of derivatives and the regression function in this section. 

Lemma~\ref{lem:differentiability.matern} provides the differentiability of $K_\alpha$ as well as the exact order of $\tilde{\kappa}_{\alpha,kk}$ with respect to the regularization parameter $\lambda$.
\begin{lem}\label{lem:differentiability.matern}
If $\alpha>m+\frac{1}{2}$ for $m\in\mathbb{N}_0$, then $\tilde{K}_\alpha\in C^{2m}([0,1]\times[0,1])$ and $\tilde{\kappa}_{\alpha,mm}^2\asymp \lambda^{-\frac{2m+1}{2\alpha}}$. Moreover, we have $\hat{\kappa}^2_{\alpha,01}\lesssim \lambda^{-\frac{1}{\alpha}}$ when $\alpha>1$ and $\hat{\kappa}^2_{\alpha,k+1,k+1}\lesssim \lambda^{-\frac{2k+3}{2\alpha}}$ when $\alpha>k+\frac{3}{2}$.
\end{lem}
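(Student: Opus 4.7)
The plan is to reduce every claim to the evaluation of scalar sums of the form $S(s,\lambda):=\sum_{i\geq 1} i^s\,\mu_i/(\lambda+\mu_i)$ with $\mu_i\asymp i^{-2\alpha}$, and to estimate these by splitting the index set at the threshold $N\asymp\lambda^{-1/(2\alpha)}$ where $\mu_i$ crosses the regularization level $\lambda$. In the region $i\lesssim N$ one has $\mu_i\gtrsim\lambda$, so $\mu_i/(\lambda+\mu_i)\asymp 1$, contributing a head sum $\asymp N^{s+1}$. In the region $i\gtrsim N$, $\mu_i\lesssim\lambda$ gives $\mu_i/(\lambda+\mu_i)\asymp\mu_i/\lambda$; provided $2\alpha-s>1$ the tail converges and is $\asymp N^{s+1-2\alpha}/\lambda\asymp N^{s+1}$. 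Thus whenever $-1<s<2\alpha-1$, $S(s,\lambda)\asymp\lambda^{-(s+1)/(2\alpha)}$.

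Next I would settle the $C^{2m}$ regularity of $\tilde{K}_\alpha$. Differentiating the Mercer expansion~\eqref{eq:equivalent.kernel} formally yields $\partial_x^k\partial_{x'}^k\tilde{K}_\alpha(x,x')=\sum_i \nu_i\,\psi_i^{(k)}(x)\psi_i^{(k)}(x')$ for $k\leq m$. Using $|\psi_i^{(k)}(x)|\leq\sqrt{2}\,(2\pi i)^k$ together with $\nu_i\leq\mu_i/\lambda\lesssim i^{-2\alpha}/\lambda$, the series is dominated termwise by a constant multiple of $\lambda^{-1}\sum_i i^{2k-2\alpha}$, which converges whenever $\alpha>k+1/2$. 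Weierstrass' $M$-test then gives uniform convergence on $[0,1]\times[0,1]$, so the differentiated series equals $\partial_x^k\partial_{x'}^k\tilde{K}_\alpha$ and inherits continuity from its summands; hence $\tilde{K}_\alpha\in C^{2m}([0,1]\times[0,1])$ whenever $\alpha>m+1/2$.

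For the order of $\tilde{\kappa}_{\alpha,mm}^2=\sup_x\sum_i\nu_i\,\psi_i^{(m)}(x)^2$, I would exploit a Pythagorean identity in the Fourier basis \eqref{eq:fourier}: for every $i\geq 1$,
\begin{equation*}
\psi_{2i}^{(m)}(x)^2+\psi_{2i+1}^{(m)}(x)^2\;=\;2(2\pi i)^{2m},
\end{equation*}
which is constant in $x$. Since $\mu_{2i}\asymp\mu_{2i+1}\asymp i^{-2\alpha}$, pairing consecutive terms gives $\sum_i\nu_i\,\psi_i^{(m)}(x)^2\asymp\sum_{i\geq 1} i^{2m}\,\mu_i/(\lambda+\mu_i)=S(2m,\lambda)$ uniformly in $x\in[0,1]$. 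Taking the supremum and invoking the scalar-sum estimate with $s=2m$ (whose convergence range requires exactly $\alpha>m+1/2$) delivers $\tilde{\kappa}_{\alpha,mm}^2\asymp\lambda^{-(2m+1)/(2\alpha)}$.

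Finally, the upper bounds on $\hat{\kappa}_{\alpha,01}^2$ and $\hat{\kappa}_{\alpha,k+1,k+1}^2$ follow from the same scalar-sum estimate applied directly to their defining series: with $s=1$ (requiring $\alpha>1$) one gets $S(1,\lambda)\asymp\lambda^{-1/\alpha}$, and with $s=2k+2$ (requiring $\alpha>k+3/2$) one gets $S(2k+2,\lambda)\asymp\lambda^{-(2k+3)/(2\alpha)}$. The one delicate point is the Fourier-pairing step, which replaces a supremum over $x$ of a positive series of squares by a sum of scalar coefficients up to universal constants; it hinges on the rotational identity $\cos^2+\sin^2=1$ surviving $m$-fold differentiation of a single Fourier mode, and would fail for a generic uniformly bounded basis. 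Everything else is the two-region bookkeeping at $N\asymp\lambda^{-1/(2\alpha)}$ described in the first paragraph.
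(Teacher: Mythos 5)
Your proposal is correct and follows essentially the same route as the paper: reduce everything to the scalar sums $\sum_i i^s\mu_i/(\lambda+\mu_i)$ (you split at the threshold $N\asymp\lambda^{-1/(2\alpha)}$ where the paper compares with an integral — equivalent bookkeeping), and obtain the matching lower bound for $\tilde{\kappa}_{\alpha,mm}^2$ by pairing the cosine and sine modes, exactly as the paper does via the $\min$ over each pair. Your Weierstrass $M$-test justification of the $C^{2m}$ regularity is in fact more explicit than the paper's one-line remark that it "follows from the boundedness of $\tilde{\kappa}^2_{\alpha,mm}$."
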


The next lemma studies the differentiability of functions in the RKHS $\bbH_\alpha$ induced by $K_\alpha$. It turns out that functions in $\bbH_\alpha$ inherit the differentiability of $K_\alpha$, and the equivalent RKHS norm upper bounds the $L_2$ norm of the derivatives. Note that $\bbH_\alpha$ and $\tilde{\bbH}_\alpha$ consists of the same class of functions, thus sharing the same differentiability property. 
\begin{lem}\label{lem:matern.RKHS.derivative}
If $\alpha>m+\frac{1}{2}$ for $m\in\mathbb{N}_0$, then $f\in C^m[0,1]$ for any $f\in\bbH_\alpha$. Moreover, these exists a constant $C>0$ that does not depend on $\lambda$ such that $\|f^{(m)}\|_2\leq C\tilde{\kappa}_\alpha^{-1}\tilde{\kappa}_{\alpha,mm}\|f\|_{\tilde{\bbH}_\alpha}$ \ for any $f\in\bbH_\alpha$.

\end{lem}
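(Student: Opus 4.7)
The plan is to exploit the explicit Fourier structure of the eigenfunctions $\phi_i=\psi_i$ of $K_\alpha$. Any $f\in\bbH_\alpha$ can be written as $f=\sum_{i\geq 1} f_i\psi_i$ with $\sum f_i^2/\mu_i<\infty$. I would first establish $C^m$-regularity by showing that the formally differentiated series converges uniformly, and then upgrade this to the sharp $L_2$ bound by exploiting the crucial fact that $\{\psi_i^{(m)}\}_i$ is, up to rescaling and reindexing, still the Fourier system and hence still orthogonal in $L^2[0,1]$.

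For the first assertion, I note $|\psi_i^{(k)}(x)|\leq C_k i^k$ uniformly in $x$ for each $k\leq m$, so Cauchy--Schwarz applied to the tail gives
\begin{equation*}
\sum_{i>N}|f_i\psi_i^{(k)}(x)|\leq \Bigl(\sum_{i>N}\frac{f_i^2}{\mu_i}\Bigr)^{1/2}\Bigl(\sum_{i>N}\mu_i\psi_i^{(k)}(x)^2\Bigr)^{1/2}\leq\|f\|_{\bbH_\alpha}\Bigl(C_k^2\sum_{i>N}i^{-2\alpha+2k}\Bigr)^{1/2},
\end{equation*}
which tends to $0$ as $N\to\infty$ provided $\alpha>k+1/2$. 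Since the hypothesis $\alpha>m+1/2$ yields this for every $0\leq k\leq m$, the partial sums of $\sum f_i\psi_i^{(k)}$ are Cauchy in $C[0,1]$. Iterating the standard theorem on termwise differentiation of uniformly convergent series then gives $f\in C^m[0,1]$ with $f^{(m)}=\sum f_i\psi_i^{(m)}$.

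For the $L_2$ bound, I will exploit the orthogonality of the Fourier derivatives. Direct computation shows $\psi_{2i}^{(m)}$ and $\psi_{2i+1}^{(m)}$ equal $\pm(2\pi i)^m$ times distinct Fourier basis elements (which of $\psi_{2i},\psi_{2i+1}$ appears depends on the parity of $m$); in particular these are pairwise orthogonal in $L^2[0,1]$ with $\|\psi_{2i}^{(m)}\|_2=\|\psi_{2i+1}^{(m)}\|_2=(2\pi i)^m$. Parseval then yields
\begin{equation*}
\|f^{(m)}\|_2^2=\sum_{i\geq 1}(2\pi i)^{2m}(f_{2i}^2+f_{2i+1}^2)\leq C_m\sum_{j\geq 2}j^{2m}f_j^2.
\end{equation*}
Factoring $j^{2m}f_j^2=(j^{2m}\nu_j)\cdot(f_j^2/\nu_j)$ with $\nu_j=\mu_j/(\lambda+\mu_j)$, and using $\|f\|_{\tilde{\bbH}_\alpha}^2=\sum f_j^2/\nu_j$, gives
\begin{equation*}
\|f^{(m)}\|_2^2\leq C_m\Bigl(\sup_{j\geq 2}j^{2m}\nu_j\Bigr)\|f\|_{\tilde{\bbH}_\alpha}^2.
\end{equation*}
A calculus exercise, splitting at $j^\ast\asymp\lambda^{-1/(2\alpha)}$, shows $\sup_j j^{2m}\nu_j\asymp\lambda^{-m/\alpha}$, and by Lemma~\ref{lem:differentiability.matern} this is of the same order (with constant independent of $\lambda$) as $\tilde{\kappa}_\alpha^{-2}\tilde{\kappa}_{\alpha,mm}^2\asymp\lambda^{1/(2\alpha)}\cdot\lambda^{-(2m+1)/(2\alpha)}$. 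Taking square roots concludes the proof.

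The main subtlety is the sharpness of the $L_2$ bound: the naive route via Cauchy--Schwarz in $\tilde{\bbH}_\alpha$ gives only $\|f^{(m)}\|_\infty\leq\tilde{\kappa}_{\alpha,mm}\|f\|_{\tilde{\bbH}_\alpha}$ and thus $\|f^{(m)}\|_2\leq\tilde{\kappa}_{\alpha,mm}\|f\|_{\tilde{\bbH}_\alpha}$, which misses the $\tilde{\kappa}_\alpha^{-1}$ factor. The Parseval step is exactly what recovers this factor by exploiting the orthogonality of $\{\psi_j^{(m)}\}$ in $L^2$ rather than bounding $\sum\nu_i\phi_i^{(m)}(x)^2$ pointwise. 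Verifying the orthogonality structure and the supremum calculation $\sup_j j^{2m}\nu_j\asymp\lambda^{-m/\alpha}$ are the technical hearts of the argument.
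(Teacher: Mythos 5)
Your proof is correct and, for the key norm inequality, takes essentially the same route as the paper: both reduce via the Fourier expansion to $\|f^{(m)}\|_2^2\asymp\sum_i i^{2m}f_i^2$ and then bound $i^{2m}\nu_i$ uniformly in $i$ by $\lambda^{-m/\alpha}\asymp\tilde{\kappa}_\alpha^{-2}\tilde{\kappa}_{\alpha,mm}^2$ (your ``split at $j^*\asymp\lambda^{-1/(2\alpha)}$'' is the paper's termwise inequality $\lambda^{m/\alpha}i^{2m}\leq C'(1+\lambda i^{2\alpha})$ in disguise). The only difference is the first claim, where you give a self-contained uniform-convergence argument for termwise differentiation while the paper simply cites Corollary~4.36 of Steinwart and Christmann; your version is more elementary and equally valid.
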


We divide the error $\hat{f}_n^{(k)}-f_0^{(k)}$ into two parts: $\hat{f}_n^{(k)}-f^{(k)}_\lambda$ and $f_\lambda^{(k)}-f_0^{(k)}$. The first part can be tackled by applying Lemma~\ref{lem:matern.RKHS.derivative} to Theorem~\ref{thm:equivalent.bound}. For the second part $f_\lambda^{(k)}-f_0^{(k)}$, we provide the error bound under the $L_\infty$ norm in the following Lemma~\ref{lem:matern.deriv.deterministic}.
\begin{lem}\label{lem:matern.deriv.deterministic}
Suppose $f_0\in H^{\alpha}[0,1]$ for $\alpha>k+1/2$ and $k\in\mathbb{N}_0$. If $K_\alpha$ is used in the GP prior, then it holds
\begin{equation}
\|f_\lambda^{(k)}-f_0^{(k)}\|_\infty \lesssim\lambda^{\frac{1}{2}-\frac{k}{2\alpha}}.
\end{equation}
\end{lem}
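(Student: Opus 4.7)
The plan is to use the spectral representation relative to the Fourier basis. Since $L_{\tilde K_\alpha}$ acts diagonally in $\{\psi_i\}$ with eigenvalues $\nu_i = \mu_i/(\lambda+\mu_i)$, and $f_\lambda = L_{\tilde K_\alpha} f_0$, I would start by writing
\begin{equation*}
f_0 - f_\lambda \;=\; \sum_{i=1}^\infty (1-\nu_i)\, f_{0,i}\,\psi_i \;=\; \sum_{i=1}^\infty \frac{\lambda}{\lambda+\mu_i}\, f_{0,i}\,\psi_i,
\qquad f_{0,i}=\langle f_0,\psi_i\rangle_2.
\end{equation*}
Because $\alpha > k+1/2$ and $\sum_i i^\alpha|f_{0,i}|=\|f_0\|^2_{H^\alpha[0,1]}<\infty$, the termwise-differentiated series converges absolutely (since $i^k \le i^\alpha$), so I may take $k$ derivatives inside the sum.

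Next I would control the derivatives of the Fourier basis. From \eqref{eq:fourier}, direct calculation gives $\|\psi_i^{(k)}\|_\infty \le (2\pi i)^k \sqrt{2}$. Applied to the series above and combined with the triangle inequality in $L_\infty$,
\begin{equation*}
\|f_0^{(k)} - f_\lambda^{(k)}\|_\infty \;\lesssim\; \sum_{i=1}^\infty \frac{\lambda\, i^k}{\lambda+\mu_i}\,|f_{0,i}| \;=\; \sum_{i=1}^\infty \underbrace{\frac{\lambda\, i^{k-\alpha}}{\lambda+\mu_i}}_{=:\,c_i}\cdot\, i^\alpha |f_{0,i}|.
\end{equation*}
Factoring out the supremum of $c_i$ reduces the problem to showing $\sup_{i\ge 1} c_i \lesssim \lambda^{1/2-k/(2\alpha)}$, since $\sum_i i^\alpha|f_{0,i}|$ is finite and independent of $\lambda$.

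The remaining task is a case analysis around the critical index $i^\star \asymp \lambda^{-1/(2\alpha)}$, which is the point where $\mu_i \asymp i^{-2\alpha}$ crosses $\lambda$. For $i\le i^\star$ one has $\lambda+\mu_i\asymp \mu_i \asymp i^{-2\alpha}$, so $c_i \asymp \lambda\, i^{k+\alpha}$ and this is maximized at $i=i^\star$, giving $c_{i^\star}\asymp \lambda\cdot \lambda^{-(k+\alpha)/(2\alpha)} = \lambda^{(\alpha-k)/(2\alpha)}$. For $i>i^\star$ one has $\lambda+\mu_i\asymp \lambda$, so $c_i \asymp i^{k-\alpha}$, which decreases in $i$ (using $k<\alpha$) and again attains its maximum at $i\asymp i^\star$, yielding the same value $\lambda^{(\alpha-k)/(2\alpha)} = \lambda^{1/2-k/(2\alpha)}$. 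Combining the two ranges gives the claimed bound.

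I do not anticipate a serious obstacle: the spectral representation diagonalizes both $L_{\tilde K_\alpha}$ and differentiation (up to factors of order $i^k$), so the problem reduces to an elementary supremum. The mildly delicate point is justifying termwise differentiation, which is handled via the absolute convergence bound $\sum_i i^k|f_{0,i}|\le \sum_i i^\alpha|f_{0,i}|<\infty$ granted by $\alpha>k+1/2$; this also ensures $f_0^{(k)}\in C[0,1]$, matching the differentiability of $f_\lambda$ guaranteed by Lemma~\ref{lem:matern.RKHS.derivative}.
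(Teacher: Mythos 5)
Your proposal is correct and follows essentially the same route as the paper: expand $f_0-f_\lambda$ in the Fourier eigenbasis, differentiate termwise, and reduce to bounding $\sup_i \lambda i^{k-\alpha}/(\lambda+\mu_i)$ by $\lambda^{1/2-k/(2\alpha)}$ against the finite sum $\sum_i i^\alpha|f_{0,i}|$. The only difference is cosmetic: the paper establishes that supremum bound via Young's inequality for products, whereas you do it by an explicit case split at the critical index $i^\star\asymp\lambda^{-1/(2\alpha)}$; both give the same result.
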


When $m=0$, the above three lemmas above provide error bounds for estimating the regression function. In particular, Lemma~\ref{lem:differentiability.matern} implies $\tilde{\kappa}_{\alpha}^2\asymp \lambda^{-\frac{1}{2\alpha}}$, Lemma~\ref{lem:matern.RKHS.derivative} gives $\|f\|_2\leq \|f\|_{\tilde{\bbH}_\alpha}$, and Lemma~\ref{lem:matern.deriv.deterministic} leads to $\|f_\lambda-f_0\|_\infty \lesssim\lambda^{\frac{1}{2}}$.

Finally, we present a non-asymptotic convergence rate of $\hat{f}_n^{(k)}$ under the $L_2$ norm, which is nearly minimax optimal~\citep{stone1982optimal}.
\begin{thm}\label{thm:minimax.diff.matern}
Suppose $f_0\in H^{\alpha}[0,1]$ for $\alpha>k+1/2$ and $k\in\mathbb{N}$. If $K_\alpha$ is used in the GP prior, then it holds with $\PP_0^{(n)}$-probability at least $1-n^{-10}$ that
\begin{equation}
\|\hat{f}_n^{(k)}-f_0^{(k)}\|_2\lesssim \left(\frac{\log n}{n}\right)^{\frac{\alpha-k}{2\alpha+1}},
\end{equation}
with the corresponding choice of regularization parameter $\lambda\asymp ({\log n}/{n})^{\frac{2\alpha}{2\alpha+1}}$.
\end{thm}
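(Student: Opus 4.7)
The natural plan is a bias--variance decomposition
\[
\hat{f}_n^{(k)} - f_0^{(k)} \;=\; \bigl(\hat{f}_n^{(k)} - f_\lambda^{(k)}\bigr) \,+\, \bigl(f_\lambda^{(k)} - f_0^{(k)}\bigr),
\]
and to bound each summand at rate $(\log n/n)^{(\alpha-k)/(2\alpha+1)}$ in $L_2$ under the choice $\lambda\asymp(\log n/n)^{2\alpha/(2\alpha+1)}$. The stochastic piece will be handled by first elevating to the equivalent RKHS norm and then invoking Theorem~\ref{thm:equivalent.bound}; the deterministic piece is a direct application of Lemma~\ref{lem:matern.deriv.deterministic}. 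The upshot is that the single optimal choice of $\lambda$ (depending only on $\alpha$, not on $k$) balances bias and variance for every admissible derivative order simultaneously.

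For the stochastic term, the key move is to trade the $L_2$-norm of the $k$-th derivative for the equivalent RKHS norm using Lemma~\ref{lem:matern.RKHS.derivative}, which yields
\[
\|\hat{f}_n^{(k)} - f_\lambda^{(k)}\|_2 \;\lesssim\; \tilde{\kappa}_\alpha^{-1}\,\tilde{\kappa}_{\alpha,kk}\,\|\hat{f}_n - f_\lambda\|_{\tilde{\bbH}_\alpha}.
\]
Theorem~\ref{thm:equivalent.bound} then controls $\|\hat{f}_n - f_\lambda\|_{\tilde{\bbH}_\alpha}$ by the sum of a term of order $\tilde{\kappa}_\alpha\sqrt{\log n/n}\,\|f_\lambda-f_0\|_\infty$ (coming from $\tilde{\kappa}_\alpha^{-1}C(n,\tilde{\kappa}_\alpha)$) and the pure noise contribution of order $\tilde{\kappa}_\alpha\sqrt{\log n/n}$. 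Since Lemma~\ref{lem:matern.deriv.deterministic} with $k=0$ gives $\|f_\lambda - f_0\|_\infty\lesssim \lambda^{1/2}\to 0$, the first piece is absorbed into the second, yielding with probability at least $1-n^{-10}$
\[
\|\hat{f}_n - f_\lambda\|_{\tilde{\bbH}_\alpha} \;\lesssim\; \tilde{\kappa}_\alpha\sqrt{\log n/n}.
\]
Combining with the rates $\tilde{\kappa}_\alpha^2\asymp \lambda^{-1/(2\alpha)}$ and $\tilde{\kappa}_{\alpha,kk}^2\asymp \lambda^{-(2k+1)/(2\alpha)}$ from Lemma~\ref{lem:differentiability.matern} produces $\|\hat{f}_n^{(k)} - f_\lambda^{(k)}\|_2 \lesssim \lambda^{-(2k+1)/(4\alpha)}\sqrt{\log n/n}$.

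For the bias term, Lemma~\ref{lem:matern.deriv.deterministic} directly yields $\|f_\lambda^{(k)} - f_0^{(k)}\|_\infty \lesssim \lambda^{1/2 - k/(2\alpha)}$, and since $\PP_X$ is uniform on $[0,1]$ we have $\|\cdot\|_2\le\|\cdot\|_\infty$, so the same bound carries over to $L_2$. Plugging in $\lambda\asymp(\log n/n)^{2\alpha/(2\alpha+1)}$, one checks that both the variance bound $\lambda^{-(2k+1)/(4\alpha)}\sqrt{\log n/n}$ and the bias bound $\lambda^{1/2-k/(2\alpha)}$ reduce to $(\log n/n)^{(\alpha-k)/(2\alpha+1)}$. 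Finally, the precondition $\tilde{\kappa}_\alpha^2=o(\sqrt{n/\log n})$ required by Theorem~\ref{thm:equivalent.bound} is verified from $\tilde{\kappa}_\alpha^2\asymp(n/\log n)^{1/(2\alpha+1)}$ since $\alpha>1/2$.

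The main obstacle I anticipate is the careful bookkeeping in the stochastic step: the $\tilde{\bbH}_\alpha$-to-$L_2$ conversion factor $\tilde{\kappa}_\alpha^{-1}\tilde{\kappa}_{\alpha,kk}\asymp \lambda^{-k/(2\alpha)}$ must be tracked precisely against the $\tilde{\kappa}_\alpha\sqrt{\log n/n}$ bound on $\|\hat{f}_n-f_\lambda\|_{\tilde{\bbH}_\alpha}$, and one has to ensure the $\|f_\lambda-f_0\|_\infty$ contamination in Theorem~\ref{thm:equivalent.bound} really does get absorbed rather than spoiling the rate. Once that is in place, the minimax exponent emerges cleanly from the balance $\lambda^{-(2k+1)/(4\alpha)}\sqrt{\log n/n}=\lambda^{1/2-k/(2\alpha)}$, which is solved by $\lambda\asymp(\log n/n)^{2\alpha/(2\alpha+1)}$ independently of $k$.
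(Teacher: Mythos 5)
Your proposal is correct and follows essentially the same route as the paper's proof: the same decomposition into $\hat{f}_n^{(k)}-f_\lambda^{(k)}$ and $f_\lambda^{(k)}-f_0^{(k)}$, with the stochastic piece controlled by Lemma~\ref{lem:matern.RKHS.derivative} applied to Theorem~\ref{thm:equivalent.bound} and the bias piece by Lemma~\ref{lem:matern.deriv.deterministic}, balanced at $\lambda\asymp(\log n/n)^{2\alpha/(2\alpha+1)}$. The only (immaterial) difference is that you absorb the $\|f_\lambda-f_0\|_\infty$ contamination at the $\tilde{\bbH}_\alpha$ level using $\lambda^{1/2}\to 0$, whereas the paper carries it through as an extra $\lambda^{1/2}$ term after multiplying by $\tilde{\kappa}_\alpha^{-1}\tilde{\kappa}_{\alpha,kk}$; both yield the same final rate.
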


\begin{rem}
Compared with Lemma~\ref{lem:minimax.matern}, we can see that given the smoothness level of the regression function, the rate-optimal estimation of $f$ and its derivatives share the same choice of $\lambda$. Thus, KRR adapts to the order of the derivative to be estimated, which carries over to the counterpart nonparametric Bayesian procedure as a result of the equivalence theory. 
\end{rem}

\subsection{Convergence rates of posterior variance}\label{sec:posterior.variance}
In this section, we study the convergence rate of posterior variance $\hat{V}_n(\bx)$ of $\Pi_n(\cdot\mid\data)$, which plays an important role not only in deriving the equivalence theory but also in uncertainty quantification of nonparametric Bayes.

The posterior covariance $\hat{V}_n(\bx,\bx')$ can be viewed as the bias of a noise-free KRR estimator~\citep{yang2017frequentist}. To see this, let $\hat{K}_{\bx'}$ be the KRR estimator with noiseless observations of $K_{\bx'}$ at $\{X_i\}_{i=1}^n$, i.e.,
\begin{equation}\label{eq:noiseless.krr}
\hat{K}_{\bx'} = \underset{f \in \bbH}{\arg\min} \left\{\frac{1}{n}\sum_{i = 1}^n (K(X_i,\bx') - f(X_i))^2 + \lambda \|f\|^2_{\bbH}\right\}.
\end{equation} 
Then we have $\hat{K}_{\bx'}(\cdot):=K(\cdot, X)[K(X, X) + n \lambda \bI_n]^{-1} K(X, \bx')$. Substituting $\hat{K}_{\bx'}$ into \eqref{eq:variance} yields
\begin{equation}\label{eq:posterior.variance}
\begin{split}
\sigma^{-2}n\lambda \hat{V}_n(\bx,\bx')&=K(\bx, \bx') - K(\bx, X)[K(X, X) + n \lambda \bI_n]^{-1} K(X, \bx')\\
&=K_{\bx'}(\bx)-\hat{K}_{\bx'}(\bx),
\end{split}
\end{equation}
where $K_{\bx'}(\cdot):=K(\cdot,\bx')$. We next provide a non-asymptotic bound for the posterior variance $\hat{V}_n(\bx)$. 
\begin{thm}\label{thm:equivalent.sigma}
Under Condition (A1), by choosing $\lambda$ such that $\tilde{\kappa}^2=o(\sqrt{n/\log n})$, it holds with $\PP_0^{(n)}$-probability at least $1 - n^{-10}$  that
\begin{equation}
\|\hat{V}_n\|_\infty\leq \frac{2\sigma^2\tilde{\kappa}^2}{n}.
\end{equation}
\end{thm}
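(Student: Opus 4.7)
The plan is to leverage the interpretation of the posterior variance as the error of a noiseless KRR estimator, given in \eqref{eq:posterior.variance}, and then invoke Corollary~\ref{cor:h.tilde.noiseless} with an explicit computation of the proximate function $(K_{\bx})_\lambda$.

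First I would fix $\bx\in\mX$ and rewrite the marginal posterior variance, using \eqref{eq:posterior.variance} with $\bx'=\bx$, as $\hat{V}_n(\bx)=\frac{\sigma^2}{n\lambda}\bigl(K(\bx,\bx)-\hat{K}_{\bx}(\bx)\bigr)$, where $\hat{K}_{\bx}$ is the noiseless KRR estimator of the target function $K_{\bx}(\cdot):=K(\cdot,\bx)\in\bbH$ defined in \eqref{eq:noiseless.krr}. Because Corollary~\ref{cor:h.tilde.noiseless} applies to any bounded target in $\Ltwo$ (and $K_{\bx}$ is continuous on a compact domain hence bounded), the corollary yields, on an event of $\PP_0^{(n)}$-probability at least $1-n^{-10}$,
\begin{equation*}
\|\hat{K}_{\bx}-K_{\bx}\|_{\infty}\le 2\,\|(K_{\bx})_\lambda-K_{\bx}\|_{\infty},
\end{equation*}
where $(K_{\bx})_\lambda=L_{\tilde{K}}K_{\bx}$ by the equivalent-kernel representation of $f_\lambda$ given just below \eqref{eq:flambda}.

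Next I would compute $(K_{\bx})_\lambda$ explicitly. Using Mercer's decomposition \eqref{eq:eigendecomposition}, orthonormality of $\{\phi_i\}$, and the eigenvalues $\nu_i=\mu_i/(\lambda+\mu_i)$ of $\tilde{K}$ in \eqref{eq:equivalent.kernel}, a direct calculation gives
\begin{equation*}
L_{\tilde{K}}K_{\bx}(\by)=\sum_{i=1}^{\infty}\frac{\mu_i^2}{\lambda+\mu_i}\phi_i(\bx)\phi_i(\by),
\end{equation*}
and hence the clean identity $(K_{\bx})_\lambda(\by)-K(\by,\bx)=-\lambda\,\tilde{K}(\by,\bx)$. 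Applying Cauchy--Schwarz under the Mercer expansion of $\tilde{K}$ together with the definition $\tilde{\kappa}^2=\sup_{\bz\in\mX}\tilde{K}(\bz,\bz)$ then yields $|\tilde{K}(\by,\bx)|\le \sqrt{\tilde{K}(\by,\by)\tilde{K}(\bx,\bx)}\le\tilde{\kappa}^2$, so that $\|(K_{\bx})_\lambda-K_{\bx}\|_{\infty}\le \lambda\tilde{\kappa}^2$ uniformly in $\bx$.

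Finally I would combine the pieces. Setting $\by=\bx$ in the triangle inequality gives $|K(\bx,\bx)-\hat{K}_{\bx}(\bx)|\le\|K_{\bx}-\hat{K}_{\bx}\|_{\infty}\le 2\lambda\tilde{\kappa}^2$, which when substituted into the formula for $\hat{V}_n(\bx)$ yields $\hat{V}_n(\bx)\le 2\sigma^2\tilde{\kappa}^2/n$. Taking the supremum over $\bx\in\mX$ gives the claim on the same event of probability at least $1-n^{-10}$. There is no serious obstacle here: the condition $\tilde{\kappa}^2=o(\sqrt{n/\log n})$ is only used to invoke Corollary~\ref{cor:h.tilde.noiseless}, and the mildly subtle point is simply to recognize that the posterior variance is an $L_\infty$ error of a noiseless KRR problem whose ``bias-only'' part can be captured sharply by the equivalent-kernel identity $(K_{\bx})_\lambda-K_{\bx}=-\lambda\tilde{K}(\cdot,\bx)$, which absorbs the factor $\lambda^{-1}$ in the prefactor and leaves the clean $\tilde{\kappa}^2/n$ scaling.
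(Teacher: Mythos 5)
Your proposal is correct and follows essentially the same route as the paper's proof: both express $\hat{V}_n(\bx)$ via \eqref{eq:posterior.variance} as the bias of a noiseless KRR problem with target $K_{\bx}$, invoke Corollary~\ref{cor:h.tilde.noiseless}, and bound $\|L_{\tilde{K}}K_{\bx}-K_{\bx}\|_\infty$ by $\lambda\tilde{\kappa}^2$ using the eigenexpansion. Your explicit identity $(K_{\bx})_\lambda-K_{\bx}=-\lambda\tilde{K}(\cdot,\bx)$ and the Cauchy--Schwarz step are just a cleaner writing of the same final inequality in the paper.
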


\subsection{Convergence rates of posterior variances of derivatives of Gaussian processes}
In this section, we present a non-asymptotic bound for the posterior variance of derivatives of Gaussian process, i.e., the variance of $\Pi_{n,k}(\cdot\mid\data)$, providing tools for establishing the nonparametric plug-in property in Theorem~\ref{thm:deriv.contraction.l2}. For simplicity we focus on the one-dimensional case where $\mX=[0,1]$, but point out that our results can be extended to multivariate cases straightforwardly.

If $K\in C^{2k}([0,1],[0,1])$, the posterior covariance of the $k$-th derivative of $f$, denoted by $\tilde{V}^k_n(x, x')$, is given in \eqref{eq:deriv.variance}. By comparing \eqref{eq:variance} and \eqref{eq:deriv.variance}, we can equivalently rewrite $\tilde{V}^k_n(x, x')$ as 
\begin{equation}
\tilde{V}^k_n(x, x') = \partial^k_{x}\partial^k_{x'}\hat{V}_n(x,x').
\end{equation}
Therefore, the posterior covariance of derivative of GP is exactly the mixed derivative of the posterior covariance of the original GP. This is not surprising as the differential operator is linear. We write $\tilde{V}^{k}_n(x):=\tilde{V}^k_n(x,x)$ for the posterior variance. We caution that, however, $\tilde{V}^{k}_n(x)$ may not be obtained by taking the derivatives of $\hat{V}_n(x)$ since differentiation and evaluation at $x=x'$ may not be exchangeable.

Finally, we provide a non-asymptotic error bound for $\tilde{V}^{k}_n(x)$.

\begin{thm}\label{thm:equivalent.sigma.deriv}
Suppose $K\in C^{2k}([0,1],[0,1])$. Under Condition (A1), by choosing $\lambda$ such that $\tilde{\kappa}^2=o(\sqrt{n/\log n})$, it holds with $\PP_0^{(n)}$-probability at least $1 - n^{-10}$ that
\begin{equation}
\|\tilde{V}^{k}_{n}\|_\infty \leq \frac{2\sigma^2\tilde{\kappa}_{kk}^2}{n}.
\end{equation}
\end{thm}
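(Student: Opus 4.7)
My plan parallels the proof of Theorem~\ref{thm:equivalent.sigma} but lifts the noiseless KRR interpretation of the posterior covariance to the $k$-th derivative. Fix $x^* \in [0,1]$ and define $g_{x^*}(\cdot) := \partial^k_{x'} K(\cdot, x')|_{x' = x^*}$, so that $g_{x^*}(X_i) = K_{0k}(X_i, x^*)$ are exactly the noiseless ``observations'' hidden inside \eqref{eq:deriv.variance}. Let $\hat{g}_{x^*}(\cdot) = K(\cdot, X)[K(X,X) + n\lambda \bI_n]^{-1} K_{0k}(X, x^*)$ denote the corresponding noiseless KRR estimator. Differentiating $\hat{g}_{x^*}$ in the first argument and matching to \eqref{eq:deriv.variance} yields the key identity
\begin{equation*}
\sigma^{-2} n \lambda \, \tilde{V}^k_n(x^*) = g_{x^*}^{(k)}(x^*) - \hat{g}_{x^*}^{(k)}(x^*).
\end{equation*}

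Next, I would invoke a derivative reproducing property on $\tilde{\bbH}$. Because $K \in C^{2k}([0,1],[0,1])$ gives $\tilde{K} \in C^{2k}([0,1],[0,1])$, the functional $f \mapsto f^{(k)}(x^*)$ is bounded on $\tilde{\bbH}$ with Riesz representer $\partial^k_{x'} \tilde{K}(\cdot, x')|_{x' = x^*}$, whose squared $\tilde{\bbH}$-norm is $\tilde{K}_{kk}(x^*,x^*) \leq \tilde{\kappa}_{kk}^2$. Cauchy-Schwarz then gives $|g_{x^*}^{(k)}(x^*) - \hat{g}_{x^*}^{(k)}(x^*)| \leq \tilde{\kappa}_{kk} \|g_{x^*} - \hat{g}_{x^*}\|_{\tilde{\bbH}}$. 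I then apply Corollary~\ref{cor:h.tilde.noiseless} to the noiseless KRR problem targeting $g_{x^*}$: the $1 - n^{-10}$ event there depends only on $X$ and $K$ (through the operator concentration underpinning Theorem~\ref{thm:equivalent.bound}) and not on the target function, so on one such event the bound $\|g_{x^*} - \hat{g}_{x^*}\|_{\tilde{\bbH}} \leq 2 \|L_{\tilde{K}} g_{x^*} - g_{x^*}\|_{\tilde{\bbH}}$ holds simultaneously for every $x^* \in [0,1]$.

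The approximation error is then handled by a clean eigenbasis computation. Expanding $K$ via Mercer gives $g_{x^*} = \sum_i \mu_i \phi_i^{(k)}(x^*) \phi_i$, so
\begin{equation*}
\|L_{\tilde{K}} g_{x^*} - g_{x^*}\|_{\tilde{\bbH}}^2 = \sum_i \frac{\lambda^2 \mu_i \phi_i^{(k)}(x^*)^2}{\lambda + \mu_i} \leq \lambda^2 \sum_i \frac{\mu_i}{\lambda + \mu_i} \phi_i^{(k)}(x^*)^2 \leq \lambda^2 \tilde{\kappa}_{kk}^2.
\end{equation*}
Chaining the inequalities, $\tilde{V}^k_n(x^*) \leq (\sigma^2 / n\lambda) \cdot \tilde{\kappa}_{kk} \cdot 2 \lambda \tilde{\kappa}_{kk} = 2\sigma^2 \tilde{\kappa}_{kk}^2 / n$, and taking the supremum over $x^*$ concludes the proof.

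The hard part will be the analytic justification needed before these algebraic steps: under only $K \in C^{2k}$, one must verify that the eigenfunctions satisfy $\phi_i \in C^k[0,1]$ (which follows from $\phi_i = \mu_i^{-1} L_K \phi_i$ and differentiation under the integral sign, using the smoothness of $K$), that termwise differentiation of Mercer's series for $\tilde{K}$ is licit with uniform convergence, and that the resulting object $\partial^k_{x'}\tilde{K}(\cdot,x')|_{x'=x^*}$ is genuinely the Riesz representer of the derivative evaluation functional on $\tilde{\bbH}$. Once these technicalities are in place, the single algebraic inequality $\lambda \mu_i / (\lambda + \mu_i) \leq \lambda$ in the last display delivers precisely the advertised constant $2\sigma^2 \tilde{\kappa}_{kk}^2 / n$.
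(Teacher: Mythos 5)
Your proposal is correct and follows essentially the same route as the paper: the identity $\sigma^{-2}n\lambda\tilde{V}^k_n(x^*)=\partial_x^k(K_{0k,x^*}-\hat{K}_{0k,x^*})(x)|_{x=x^*}$, the derivative-evaluation bound $|f^{(k)}(x^*)|\leq\tilde{\kappa}_{kk}\|f\|_{\tilde{\bbH}}$, Corollary~\ref{cor:h.tilde.noiseless} for the noise-free KRR, and the eigenbasis computation giving $\|L_{\tilde{K}}K_{0k,x^*}-K_{0k,x^*}\|_{\tilde{\bbH}}\leq\lambda\tilde{\kappa}_{kk}$ are exactly the paper's steps. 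Your explicit remark that the high-probability event depends only on $X$ and the kernel, hence holds uniformly over $x^*$, is a point the paper leaves implicit.
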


\section{Simulation}\label{sec:simulation}

We carry out simulations to assess the finite sample performance of the proposed nonparametric plug-in procedure for function derivatives; we also evaluate Gaussian process regression for estimating the regression function as a special example when the derivative order is zero. 

We consider the true function $f_0(x)=\sqrt{2}\s i^{-4}\sin i\cos[(i-1/2)\pi x]$, $x\in[0,1]$, which has H\"older smoothness level $\alpha=3$. We simulate $n$ observations from the regression model $Y_i=f_0(X_i)+\varepsilon_i$ with $\varepsilon_i\sim N(0, 0.1)$ and $X_i\sim \text{Unif}[0,1]$. We consider three sample sizes 100, 500, and 1000, and replicate the simulation 1000 times. 

For Gaussian process priors, we vary the choice of covariance kernels; in particular, we use the Mat\'ern kernel, squared exponential (SE) kernel and second-order Sobolev kernel, which are given by
\begin{align}
K_{\text{Mat},\nu}(x,x')&=\frac{2^{1-\nu}}{\Gamma(\nu)}\left(\sqrt{2\nu}|x-x'|\right)^\nu B_\nu\left(\sqrt{2\nu}|x-x'|\right),\\
K_{\text{SE}}(x,x')&=\exp(-(x-x')^2),\\
K_{\text{Sob}}(x,x')&=1+xx'+\min\{x,x'\}^2(3\max\{x, x'\}-\min\{x, x'\})/6.
\end{align}
Here $B_\nu(\cdot)$ is the modified Bessel function of the second kind with $\nu$ being the smoothness parameter to be determined. For the Mat\'ern kernel, it is well known that the eigenvalues of $K_{\text{Mat},\nu}$ decay at a polynomial rate, that is,  $\mu_i\asymp i^{-2(\nu+1/2)}$ for $i\in\mathbb{N}$. 

We compare various Gaussian process priors with a random series prior using B-splines. B-splines are widely used in nonparametric regression \citep{james2009functional,wang2020functional}, and theoretical properties of using the B-spline prior with normal basis coefficients to estimate function derivatives have been recently studied in \cite{yoo2016supremum}. The implementation of this B-spline prior follows \cite{yoo2016supremum}. In particular, for any $x\in[0,1]$, let $ b_{J,4}(x)=(B_{j,4}(x))_{j=1}^J$ be a B-spline of order $4$ and degrees of freedom $J$ with uniform knots. The prior on $f$ is given as $f(x)=b_{J,4}(x)^T \beta$ with each entry of $\beta$ following $N(0, \sigma^2)$ independently. The unknown variance $\sigma^2$ is estimated by its MMLE $\hat{\sigma}_n^2=n^{-1}Y^T(B B^T+\bI_n)^{-1}Y$, where $B=(b_{J,4}(X_1),\ldots,b_{J,4}(X_n))^T$. The number of interior knots $N=J-4$ is determined using leave-one-out cross validation. We adopt the same strategy of leave-one-out cross validation to select the degree of freedom $\nu$ in $K_{\text{Mat},\nu}$ for a fair comparison. In particular, we consider $\nu$ from the set $\{2, 2.5, 3, 3.5, 4\}$ and vary $N$ from 1 to 10. We also include a Mat\'ern kernel with the oracle value $\nu=2.5$ that matches the smoothness level of $f_0$. For Gaussian process priors, the regularization parameter $\lambda$ and unknown $\sigma^2$ are determined by empirical Bayes through maximizing the marginal likelihood.

For each method, we evaluate the posterior mean $\hat{f}_n$ and $\hat{f}_n'$ at 100 equally spaced points in $[0,1]$, and calculate the root mean square error (RMSE) between the estimates and the true functions:
\begin{equation}
\mathrm{RMSE} = \sqrt{\frac{1}{100} \sum_{t = 0}^{99} \{\hat{s}(t/99) - s(t /99)\}^2}, 
\end{equation}
where $\hat{s}$ is the estimated function ($\hat{f}_n$ or $\hat{f}_n'$) and $s$ is the true function ($f_0$ or $f_0'$). 

Table~\ref{table:RMSE} reports the average RMSE of all methods over 1000 simulations for $f_0$ and $f_0'$. Clearly the RMSE of all methods steadily decreases as the sample size $n$ increases. The squared exponential kernel and Mat\'ern kernel are the two leading approaches for all sample sizes and for both $f_0$ and $f_0'$. While the difference between various methods for $f_0$ tends to vanish when the sample size increases to $n = 1000$, the performance gap in estimating $f_0'$ is more profound. In particular, compared to the squared exponential kernel, the Sobolev kernel increases the average RMSE by nearly 60\% from 1.53 to 2.41, while the increase becomes more than twofold for the B-spline method (from 1.53 to 3.47). We report the median RMSEs for B-splines in the last row of Table~\ref{table:RMSE} as we notice considerably large RMSEs in a proportion of simulations, which improves the summarized RMSEs to close to the Sobolev kernel for both $f_0$ and $f_0'$ when $n = 1000$. The leave-one-out cross validation method appears to work well for the Mat\'ern kernel as it gives almost identical RMSEs to the Mat\'ern kernel with oracle $\nu$, for both $f_0$ and $f_0'$ at $n = 100, 500$, although there are minor differences at $n = 1000$. 

\setlength{\tabcolsep}{4pt}
\begin{table}
\centering
\renewcommand\arraystretch{1.5}
\caption{RMSE of estimating $f_0$ and $f_0'$, averaged over 1000 simulations.  Mat\'ern, SE, Sobolev, and B-splines: four methods in comparison, which are the Mat\'ern kernel, squared exponential kernel, second-order Sobolev kernel, and random series prior using B-splines. Mat\'ern*: Mat\'ern kernel with oracle $\nu$; B-splines*: median RMSEs of the B-spline method. Standard errors are provided in parentheses for all methods except B-spline*.  \label{table:RMSE}} 
\begin{tabular}{ccccccc}
	\toprule
	\multicolumn{1}{l}{}            & \multicolumn{3}{c}{$f_0$}                    & \multicolumn{3}{c}{$f_0'$}              \\ \cmidrule(lr){2-4} \cmidrule(lr){5-7}
	& $n = 100$           & 500           & 1000          & $n = 100$        & 500         & 1000        \\ \cmidrule{1-1} \cmidrule(lr){2-4} \cmidrule(lr){5-7}
	Mat\'ern* & 0.515 (0.006) & 0.251 (0.003) & 0.193 (0.002) & 2.74 (0.03) & 1.69 (0.02) & 1.48 (0.02) \\
	Mat\'ern          & 0.512 (0.006) & 0.252 (0.003) & 0.201 (0.002) & 2.74 (0.04) & 1.73 (0.02) & 1.68 (0.03) \\
	SE                               & 0.494 (0.007) & 0.254 (0.003) & 0.194 (0.002) & 2.41 (0.03) & 1.75 (0.02) & 1.53 (0.01) \\
	Sobolev                          & 0.507 (0.006) & 0.289 (0.002) & 0.241 (0.002) & 3.06 (0.03) & 2.53 (0.01) & 2.41 (0.01) \\
	B-splines          & 0.770 (0.009) &   0.343 (0.004)  &   0.250 (0.003)         & 5.92 (0.13) &   4.30 (0.12) &  3.47 (0.10)        \\
	B-splines*                       & 0.751         & 0.329         & 0.242         & 4.99        & 3.18        & 2.43        \\ \bottomrule
\end{tabular}
\end{table}

Figure~\ref{fig:boxplot} presents the boxplot of RMSEs for each method at $n = 1000$, which confirms the numerical summaries in Table~\ref{table:RMSE}. While all methods have unusually large RMSEs in some simulations, the B-spline prior exhibits more variability, particularly in estimating $f_0'$, with median performance similar to the Sobolev kernel. The squared exponential kernel appears to be more stable than the Mat\'ern kernel, while achieving slightly better median RMSEs at this large sample size $n = 1000$. 

\begin{figure}[H]
\centering
\begin{tabular}{cc}
	\includegraphics[width=0.48\linewidth]{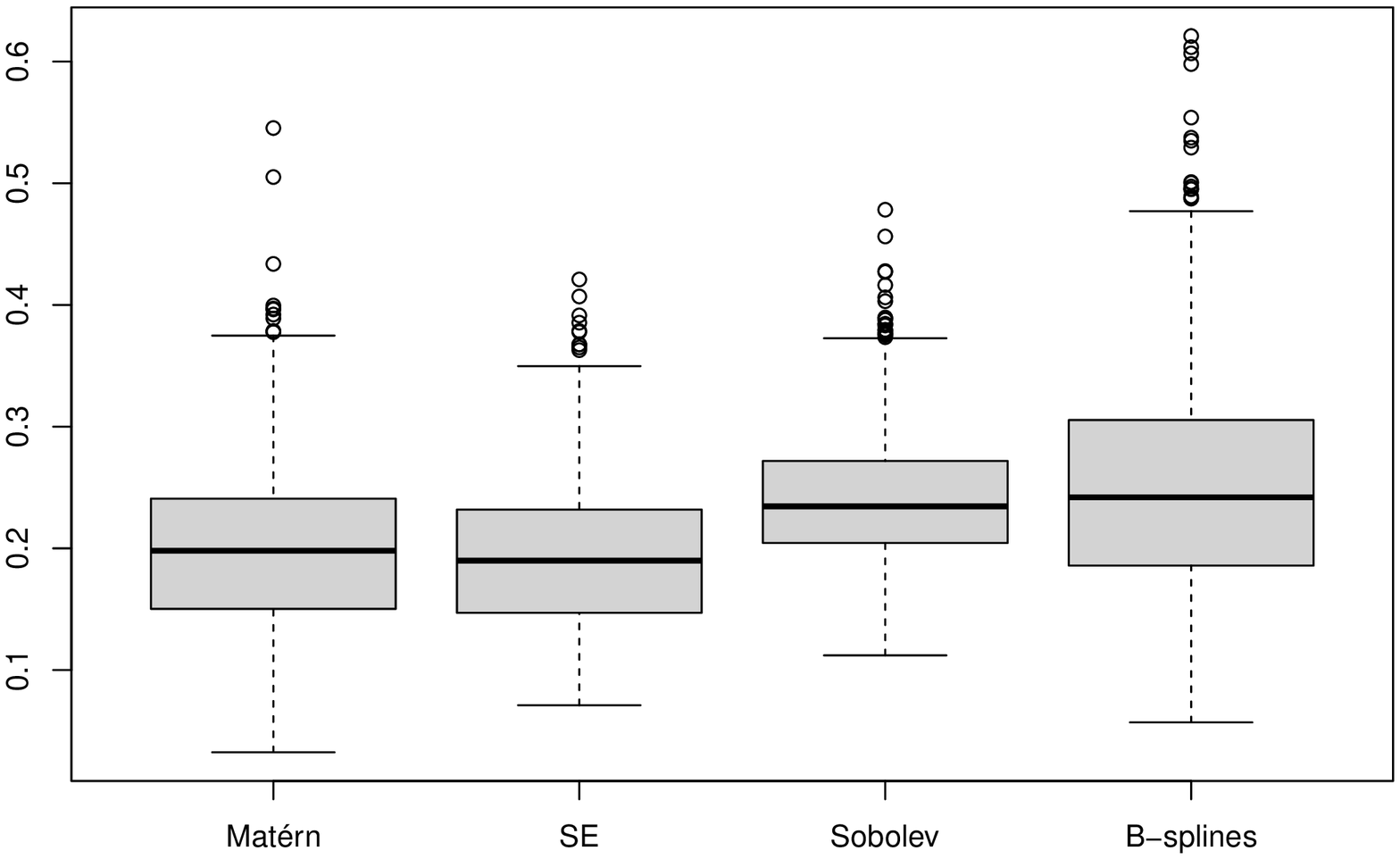}     & \includegraphics[width=0.48\linewidth]{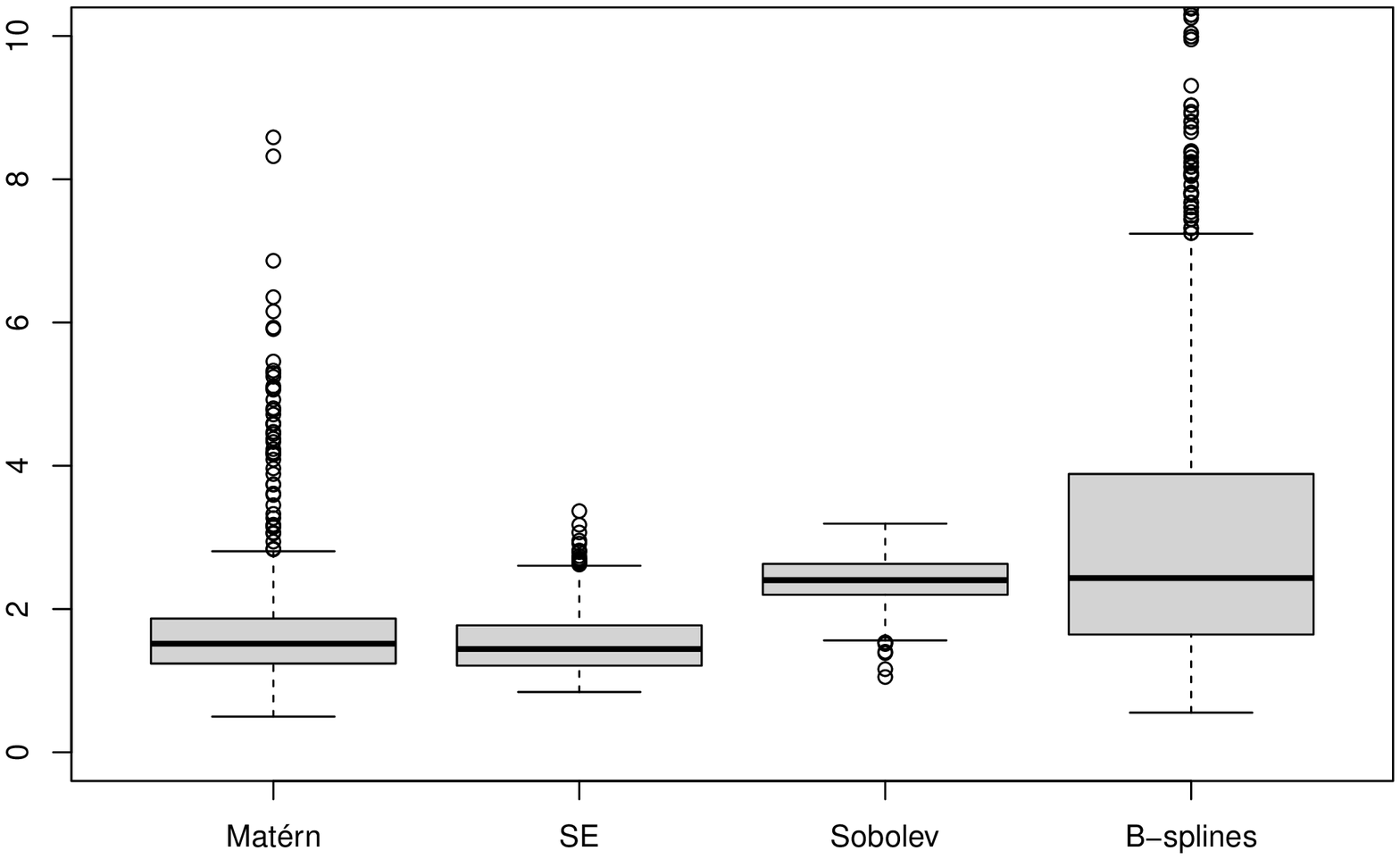} 
\end{tabular}
\caption{Boxplots of RMSEs for estimating $f_0$ (left) and $f_0'$ (right) at $n=1000$. For the plot on the right, the maximum $y$-axis is set to be around 10 for a better visualization; the maximum RMSE for B-splines is 26.46.} \label{fig:boxplot}
\end{figure}

We next choose two representative simulations to visualize the estimates of $f_0$ and $f_0'$ in Figure~\ref{fig}, where the dotted line stands for the posterior mean $\hat{f}_n$ and dashes lines for the 95\% simultaneous $L_\infty$ credible bands. For the B-spline prior, we use the default setting as in \cite{yoo2016supremum} by specifying the inflation factor $\rho=0.5$. Note that these credible bands have fixed width by construction.

\newcommand{\scale}{0.225}
\begin{figure}[h]
\centering 
\begin{tabular}{c c c @{\hskip0.01pt} c @{\hskip0.01pt} c @{\hskip0.01pt} c}
\vspace{0.2in}
& &\ \ Mat\'ern
	& \ SE & \ Sobolev & \ B-splines \\ 
\begin{sideways}
\rule[0pt]{-0.45in}{0pt} Simulation 1
\end{sideways} \quad &

\begin{sideways}
\rule[0pt]{0.45in}{0pt} $f_0$
\end{sideways} &

{\includegraphics[width = \scale\linewidth]{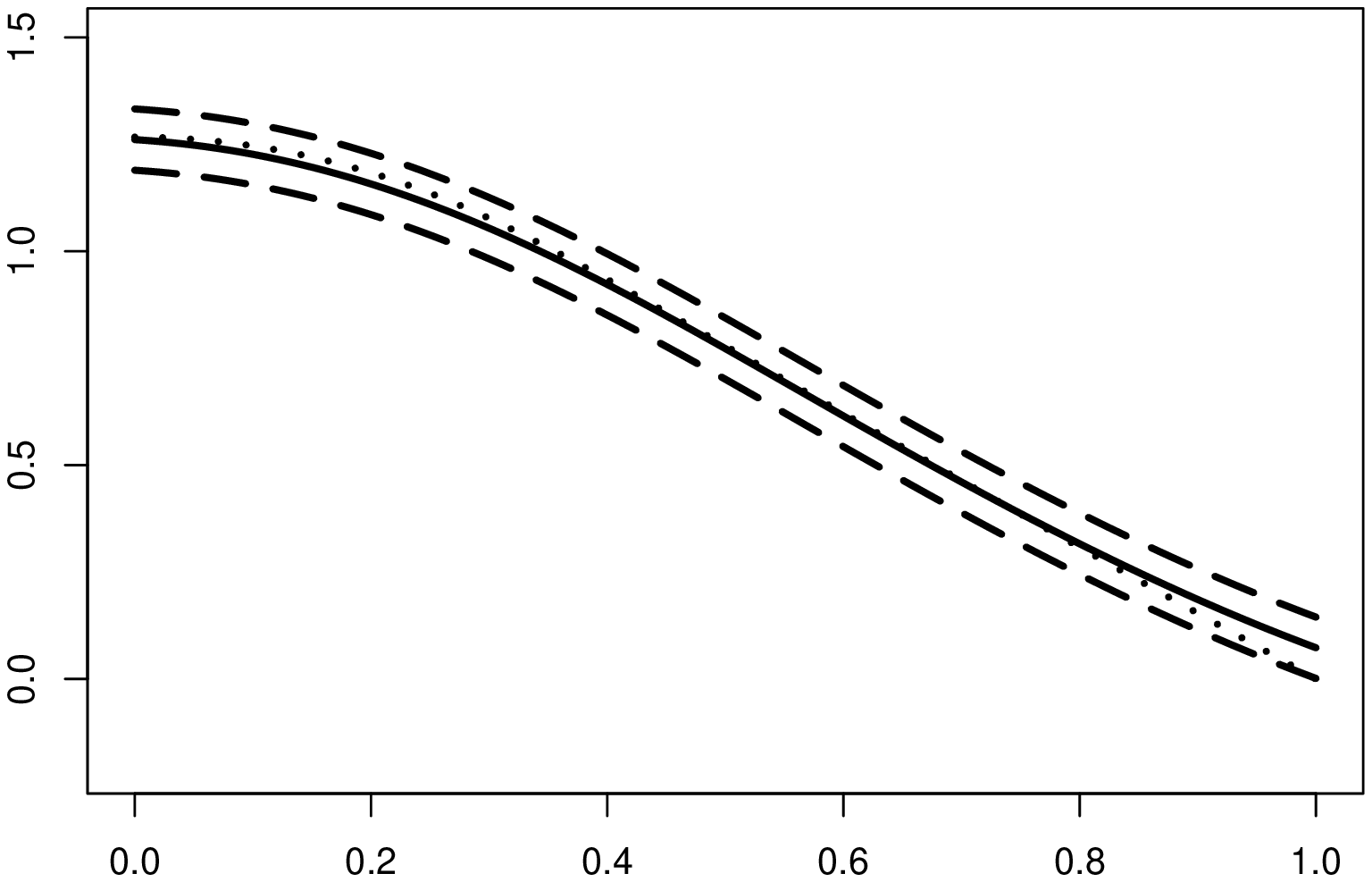}}  & 
{\includegraphics[width = \scale\linewidth]{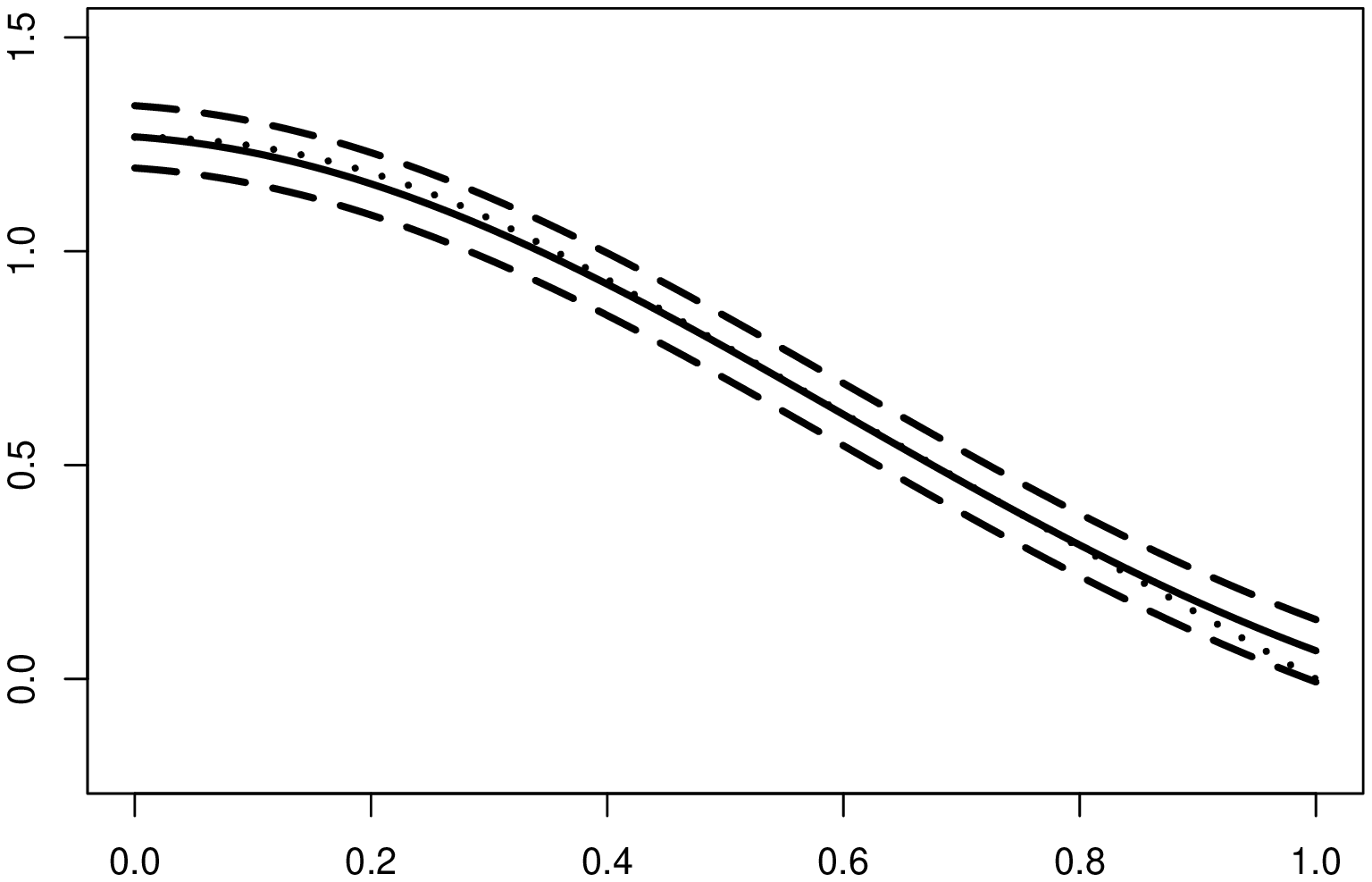}} &
{\includegraphics[width = \scale\linewidth]{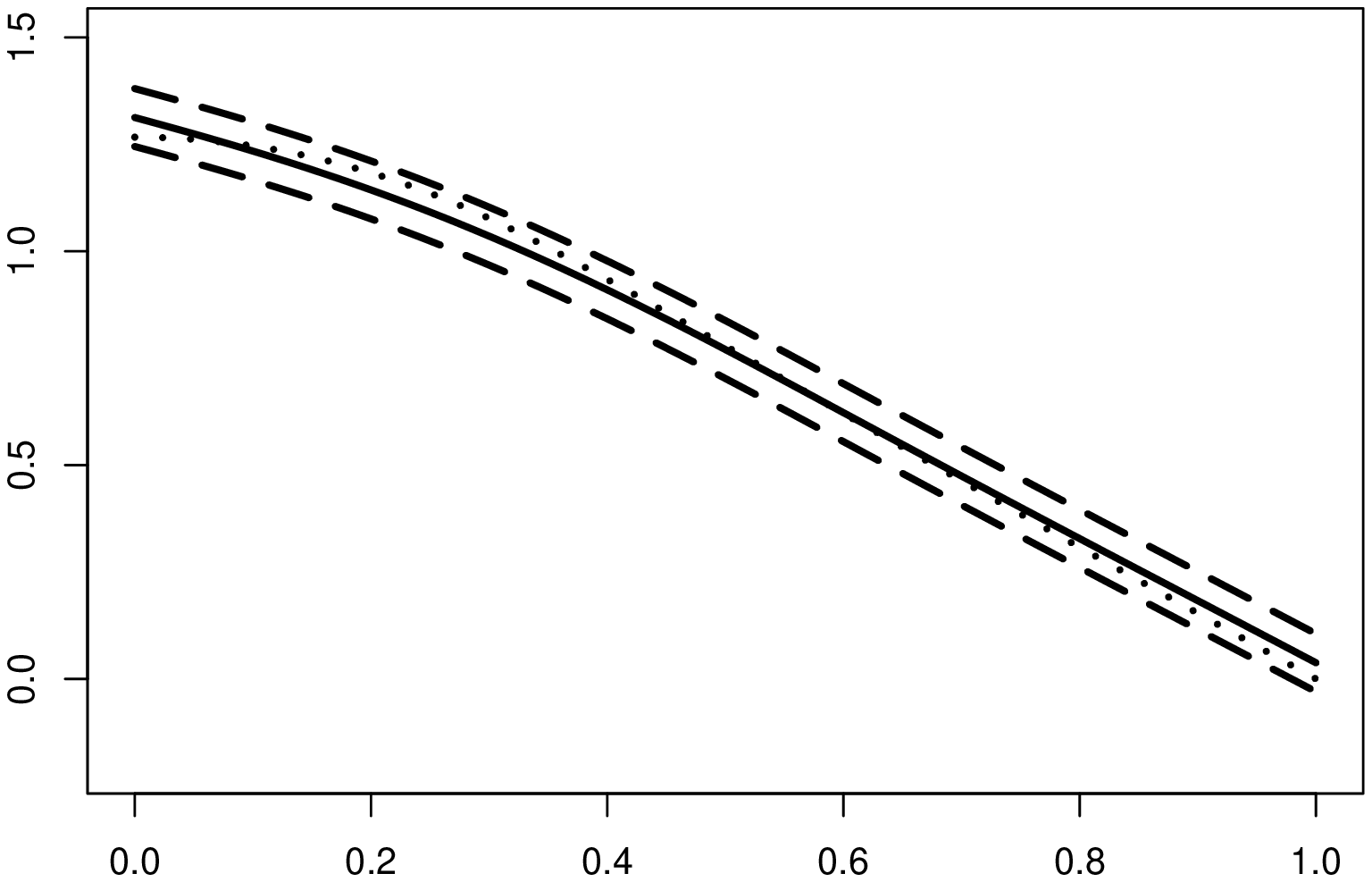}} &
{\includegraphics[width = \scale\linewidth]{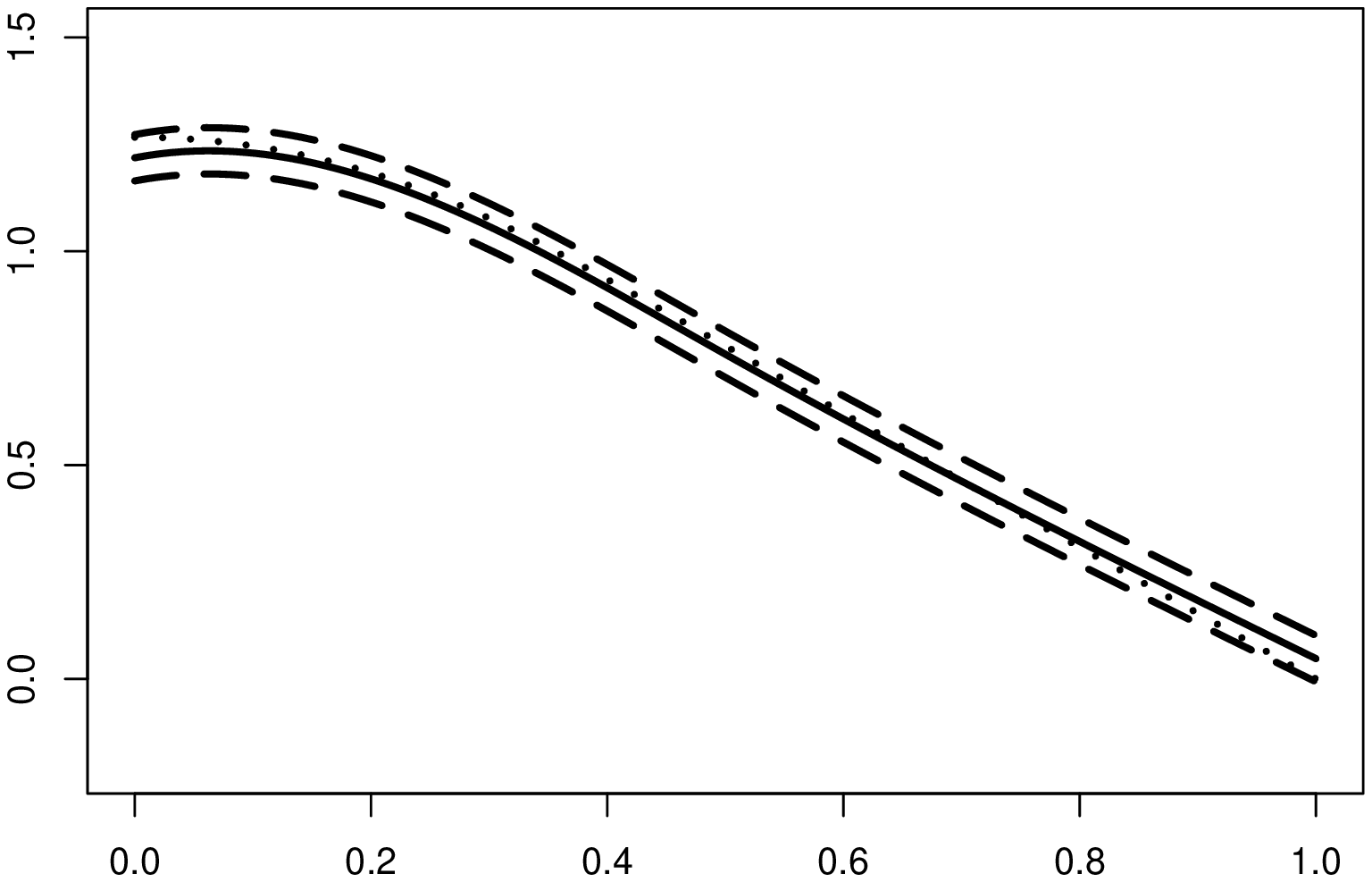}} \\

\vspace{0.2in}

&
\begin{sideways}
\rule[0pt]{0.45in}{0pt} $f_0'$
\end{sideways} &
\includegraphics[width = \scale\linewidth]{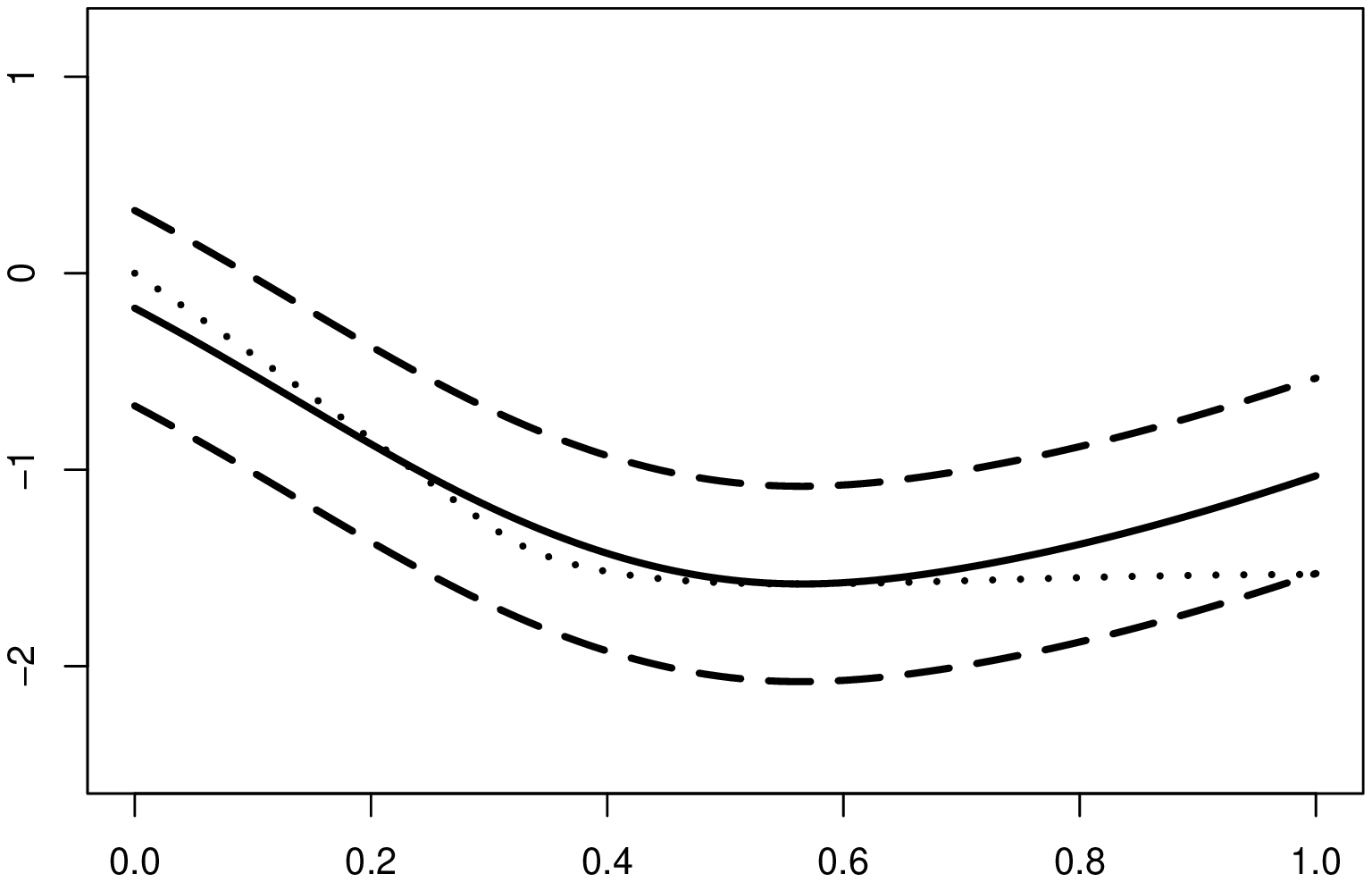} &
\includegraphics[width = \scale\linewidth]{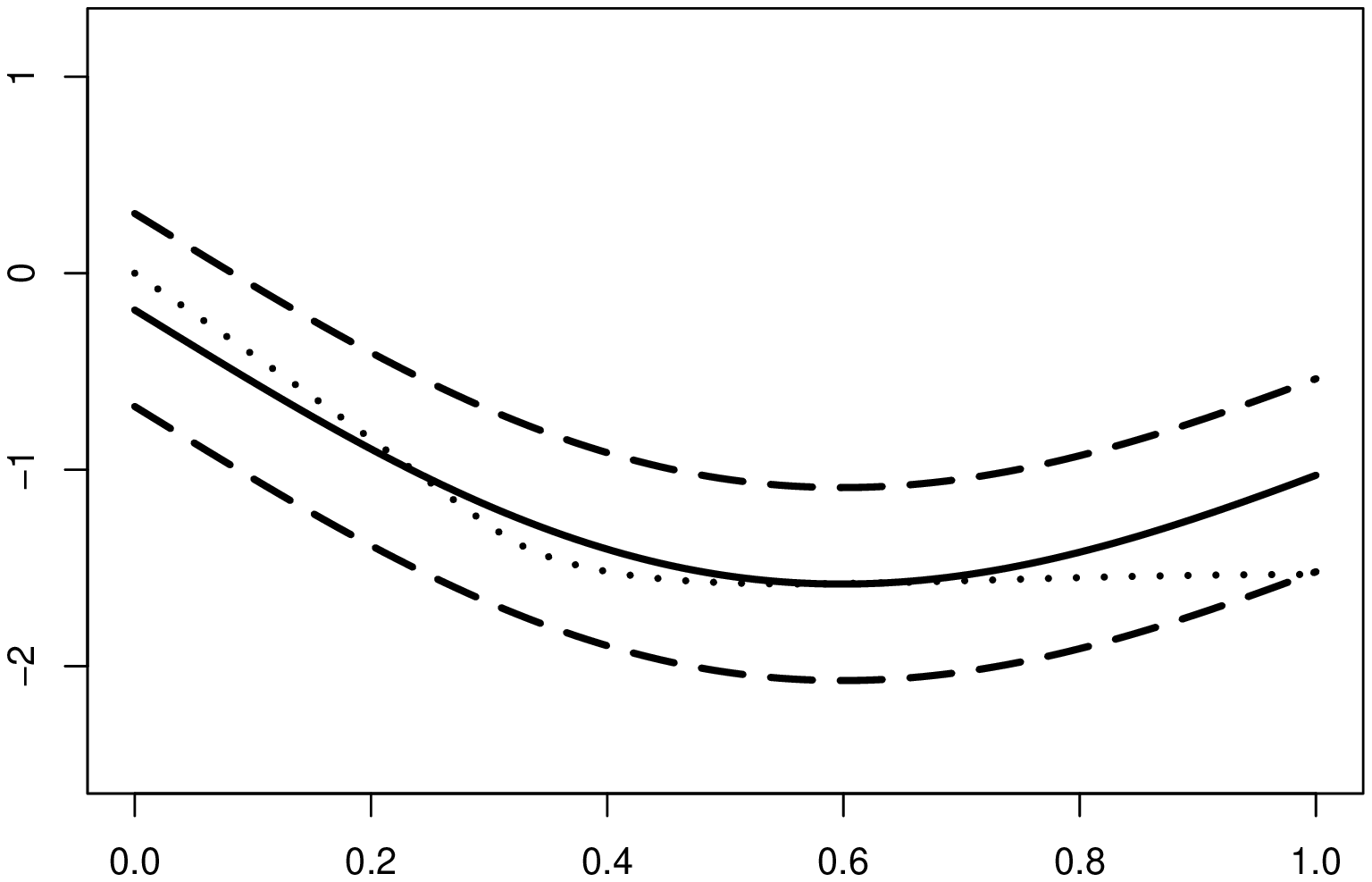} &
\includegraphics[width = \scale\linewidth]{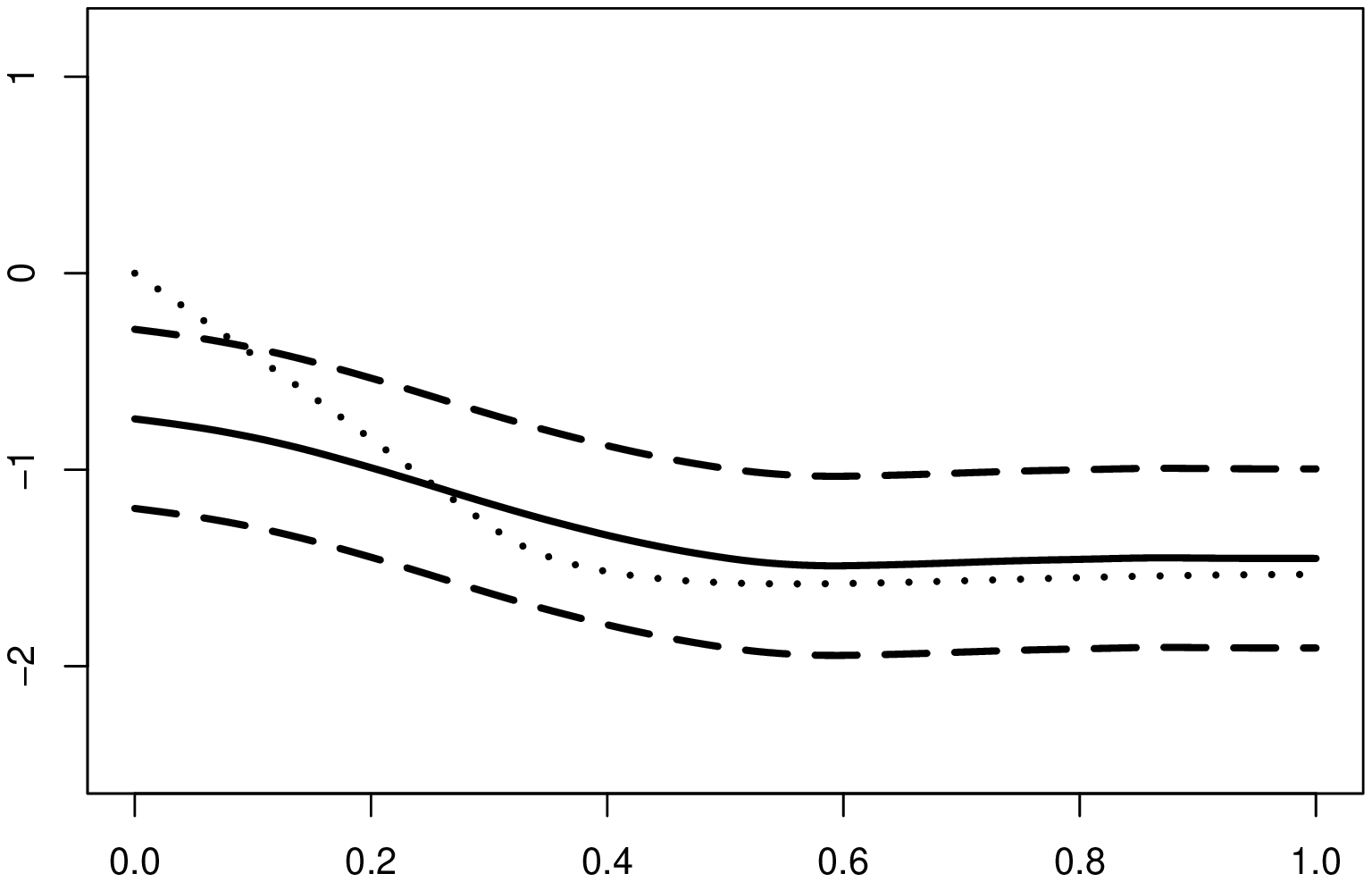} &
\includegraphics[width = \scale\linewidth]{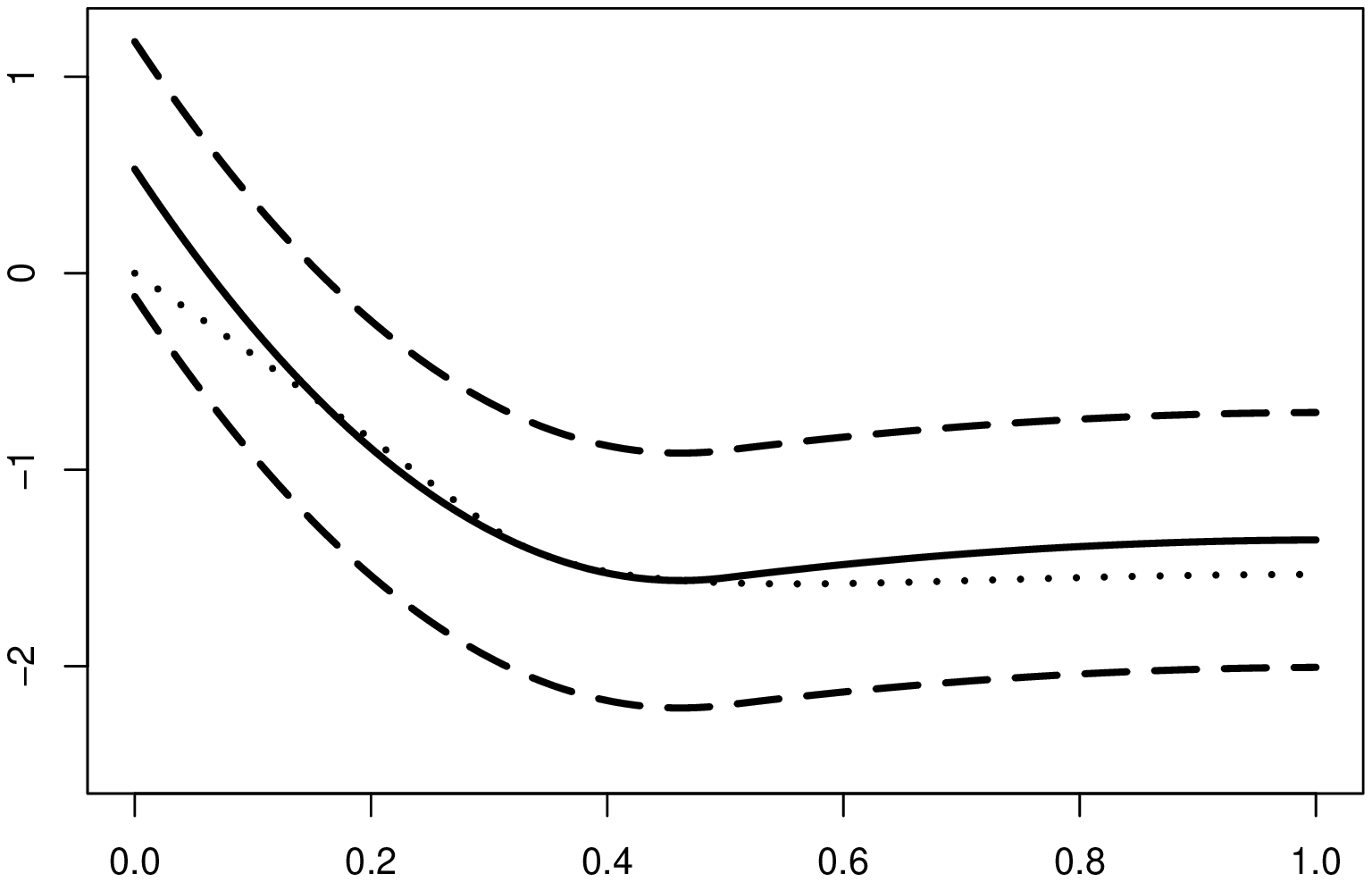} \\

\begin{sideways}
\rule[0pt]{-0.45in}{0pt} Simulation 2
\end{sideways} &
\begin{sideways}
\rule[0pt]{0.45in}{0pt} $f_0$
\end{sideways} &
\includegraphics[width = \scale\linewidth]{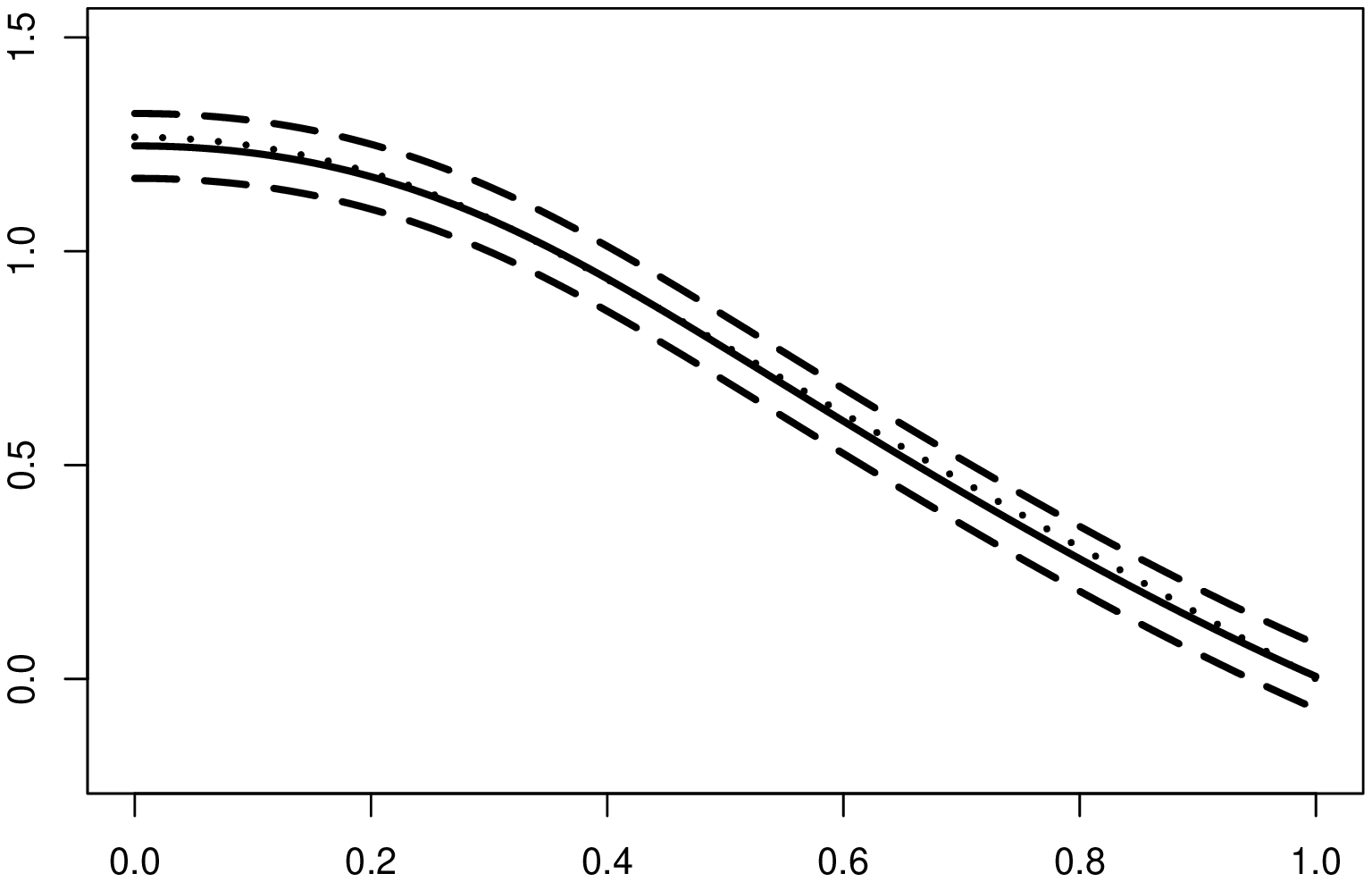} &
\includegraphics[width = \scale\linewidth]{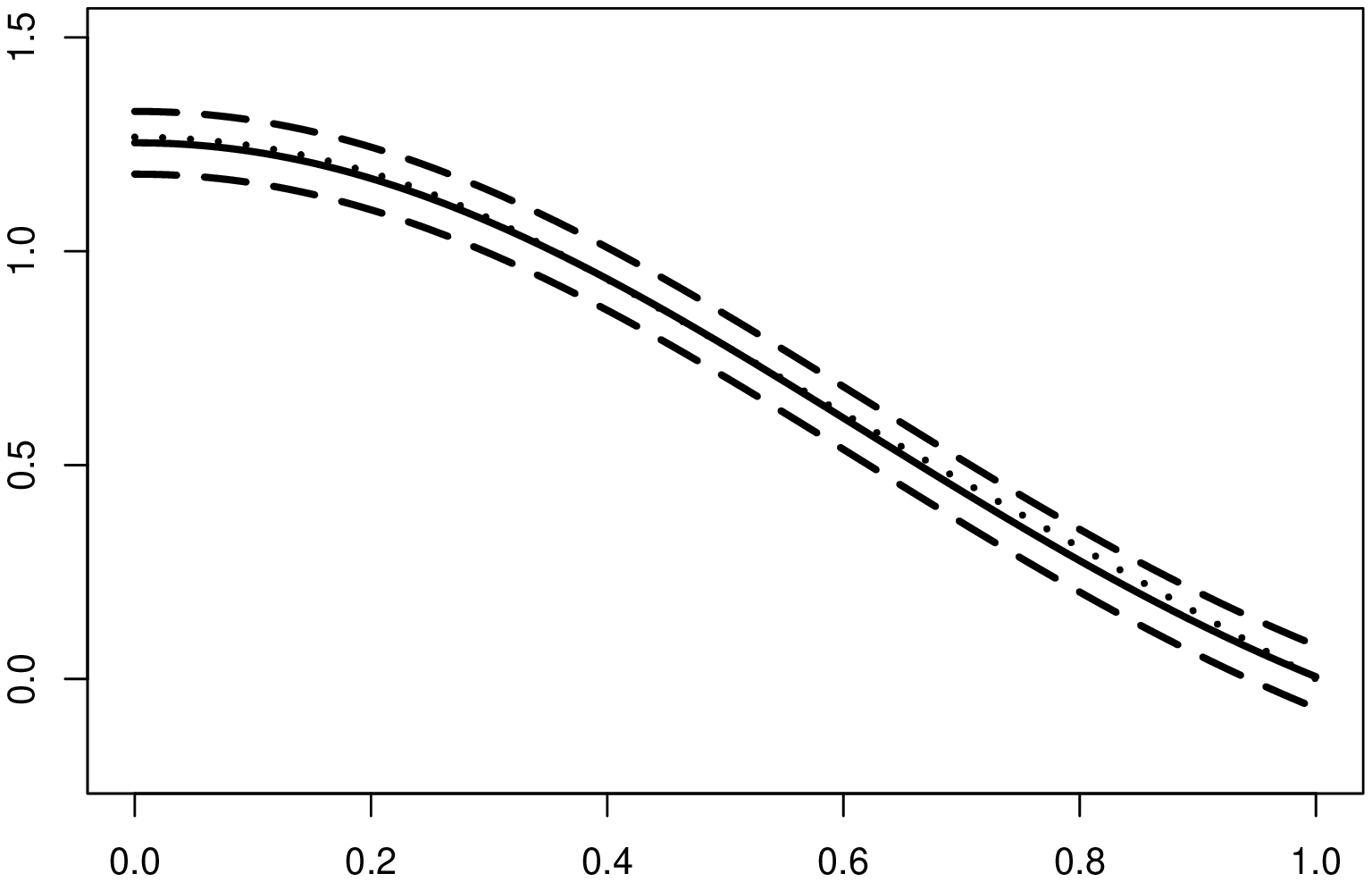} &
\includegraphics[width = \scale\linewidth]{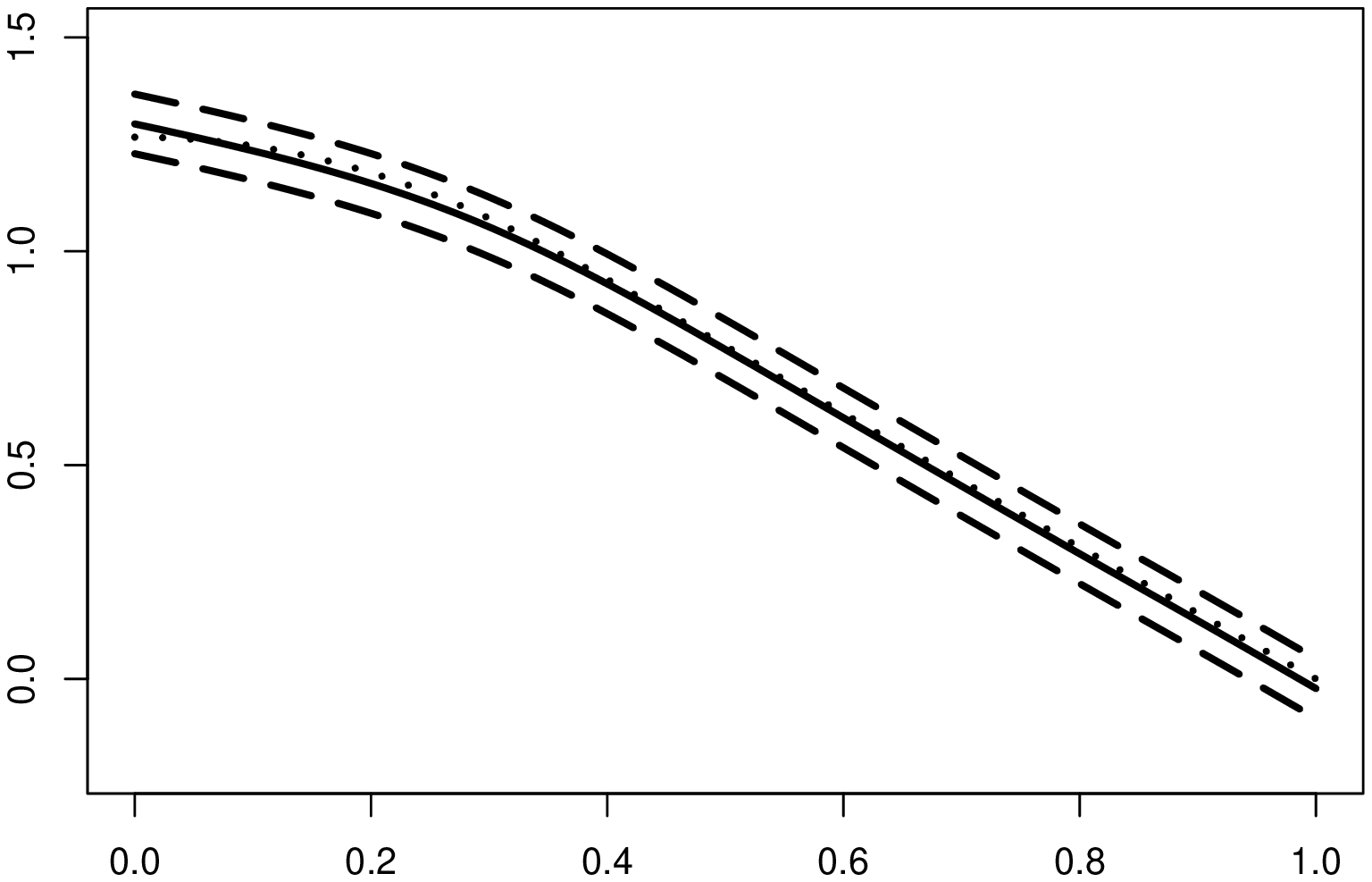} &
\includegraphics[width = \scale\linewidth]{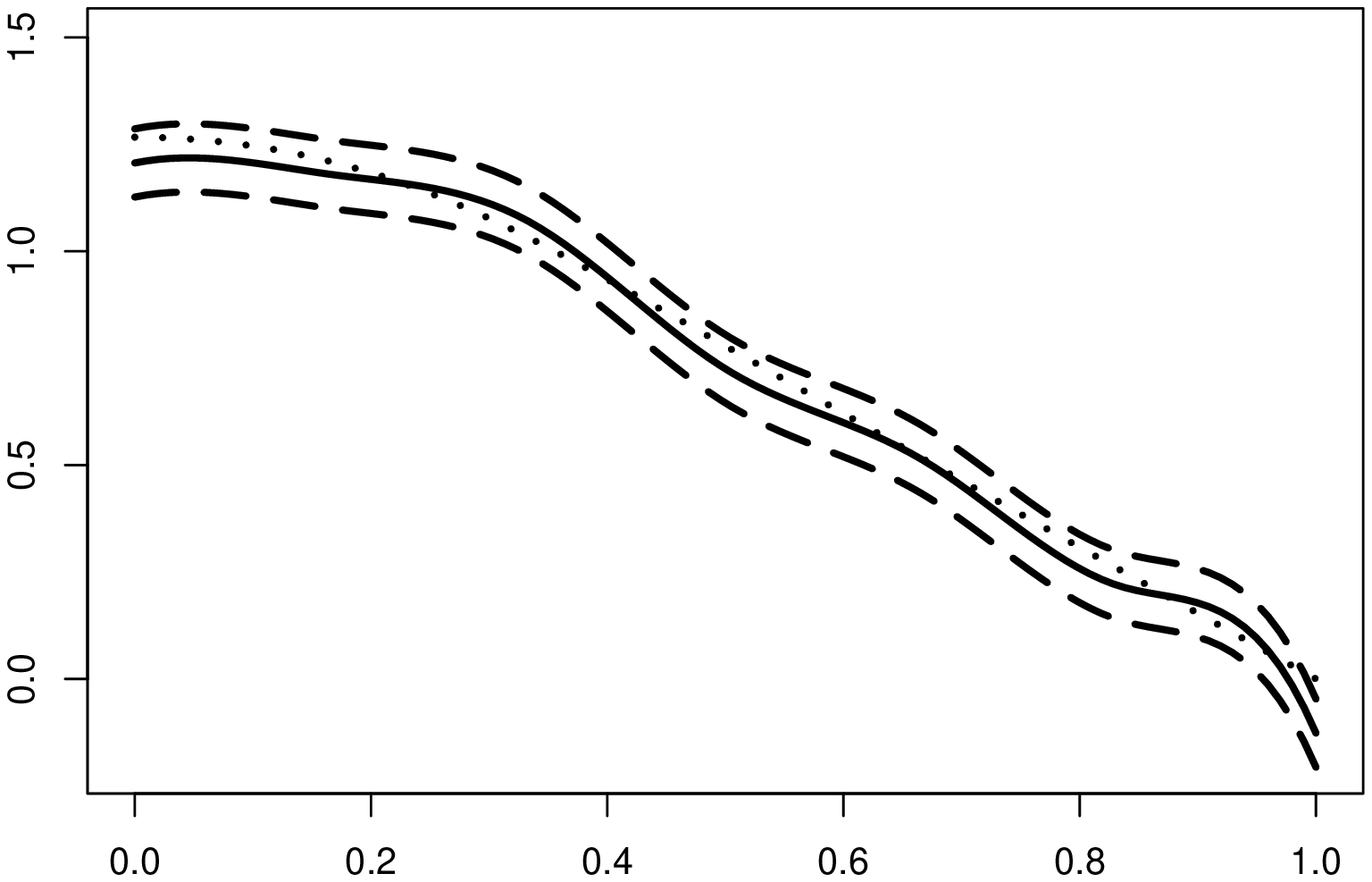} \\ 

&
\begin{sideways}
\rule[0pt]{0.45in}{0pt} $f_0'$
\end{sideways} &
\includegraphics[width = \scale\linewidth]{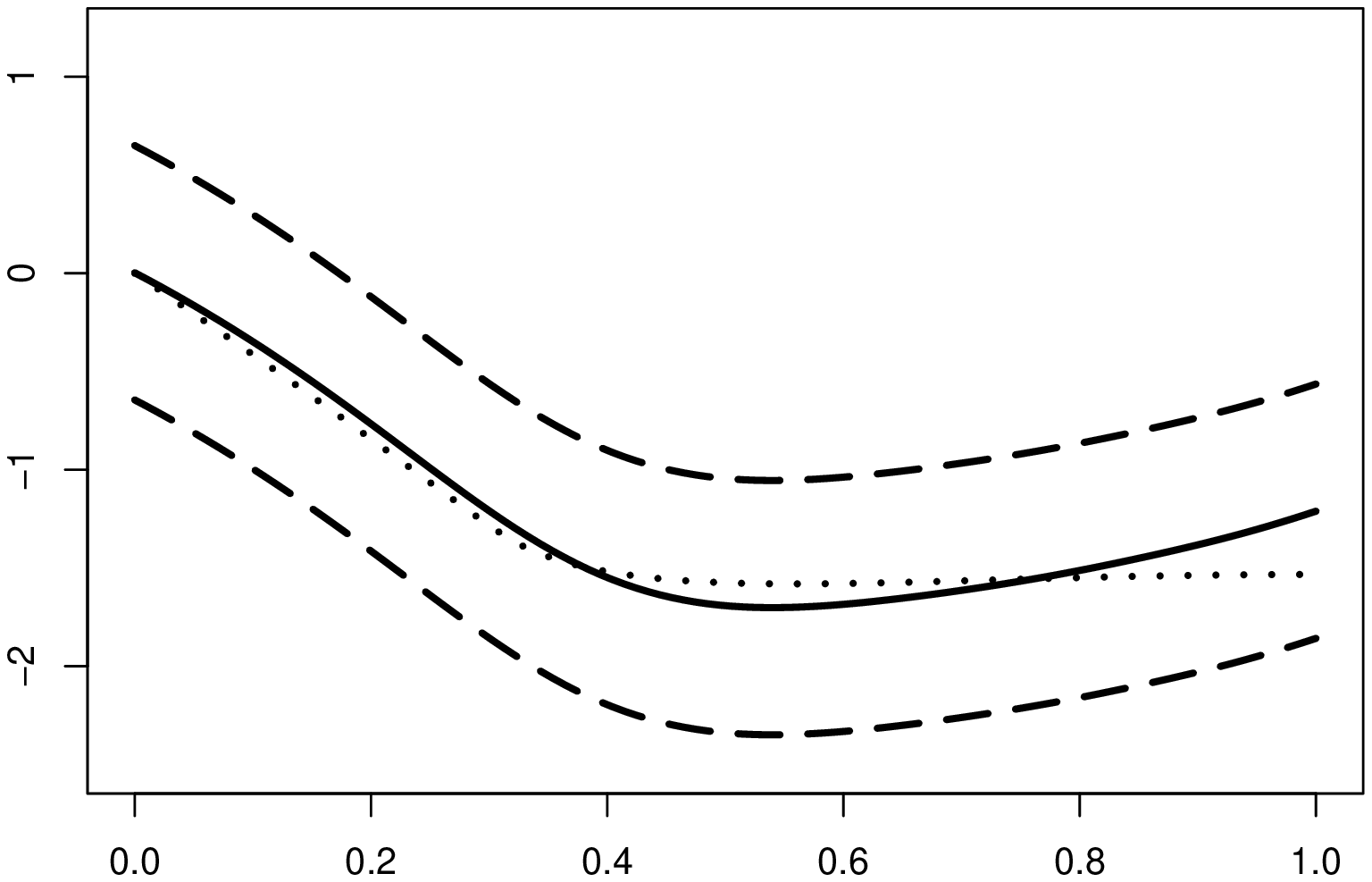} &
\includegraphics[width = \scale\linewidth]{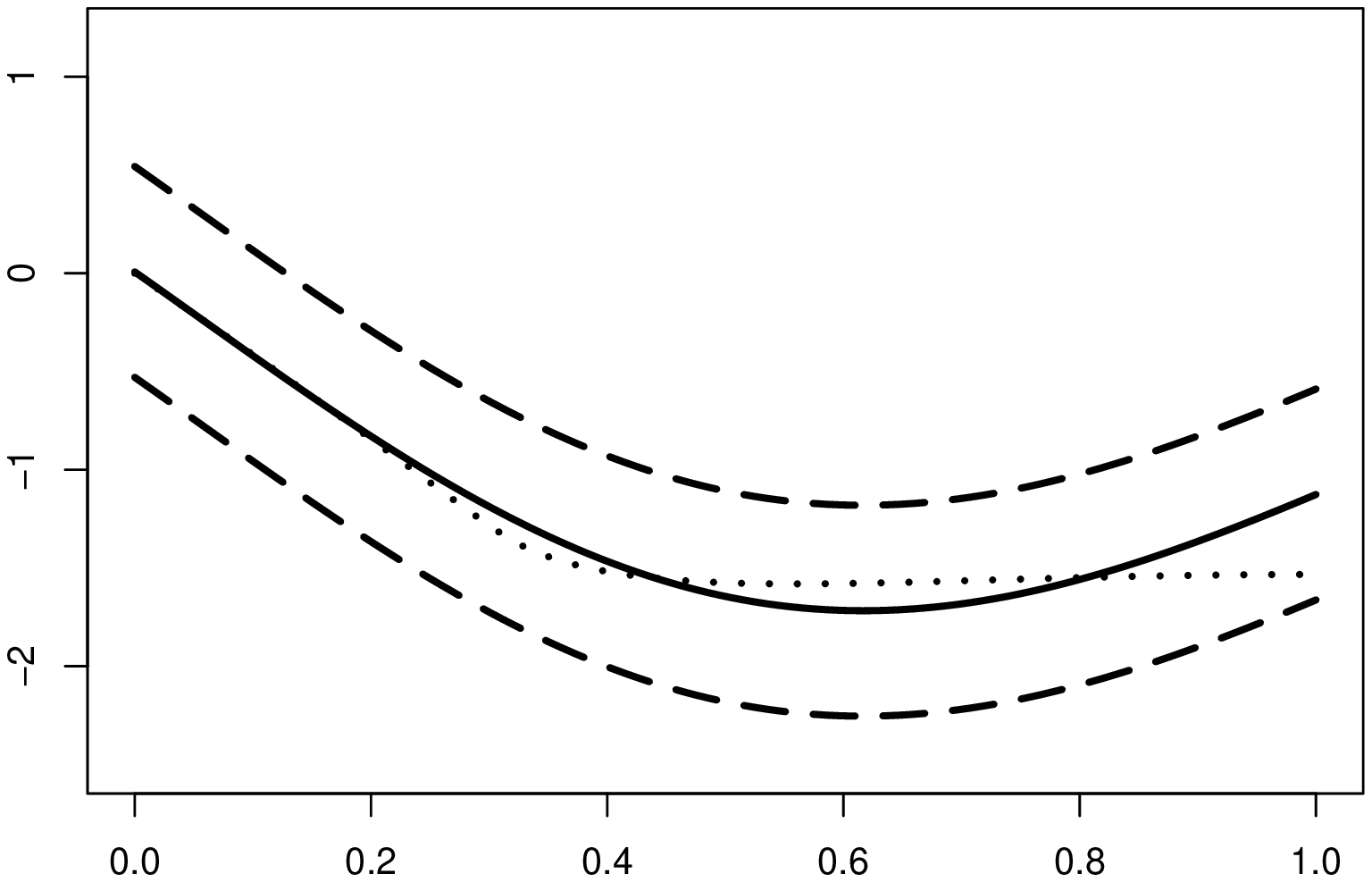} &
\includegraphics[width = \scale\linewidth]{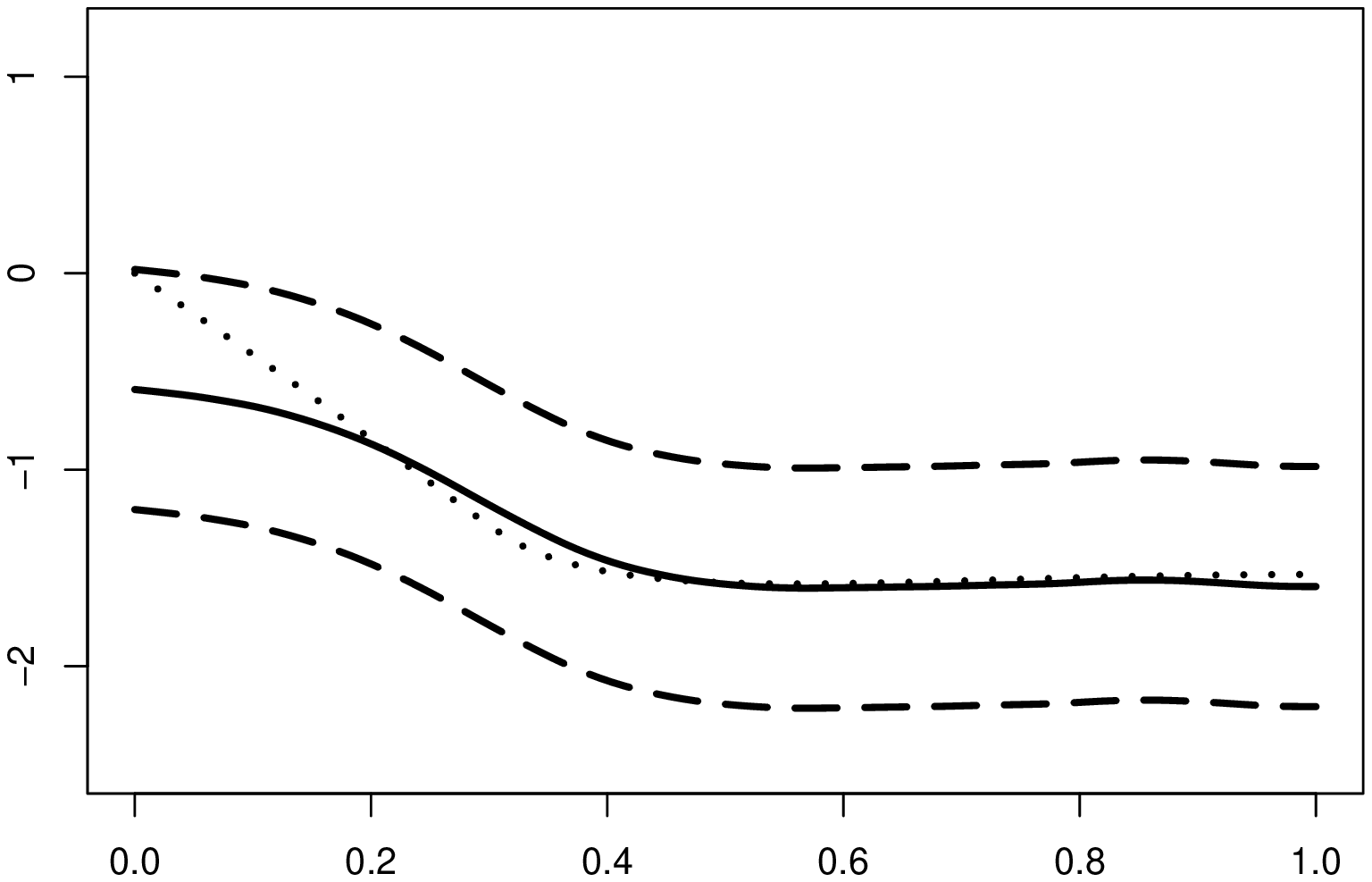} &
\includegraphics[width = \scale\linewidth]{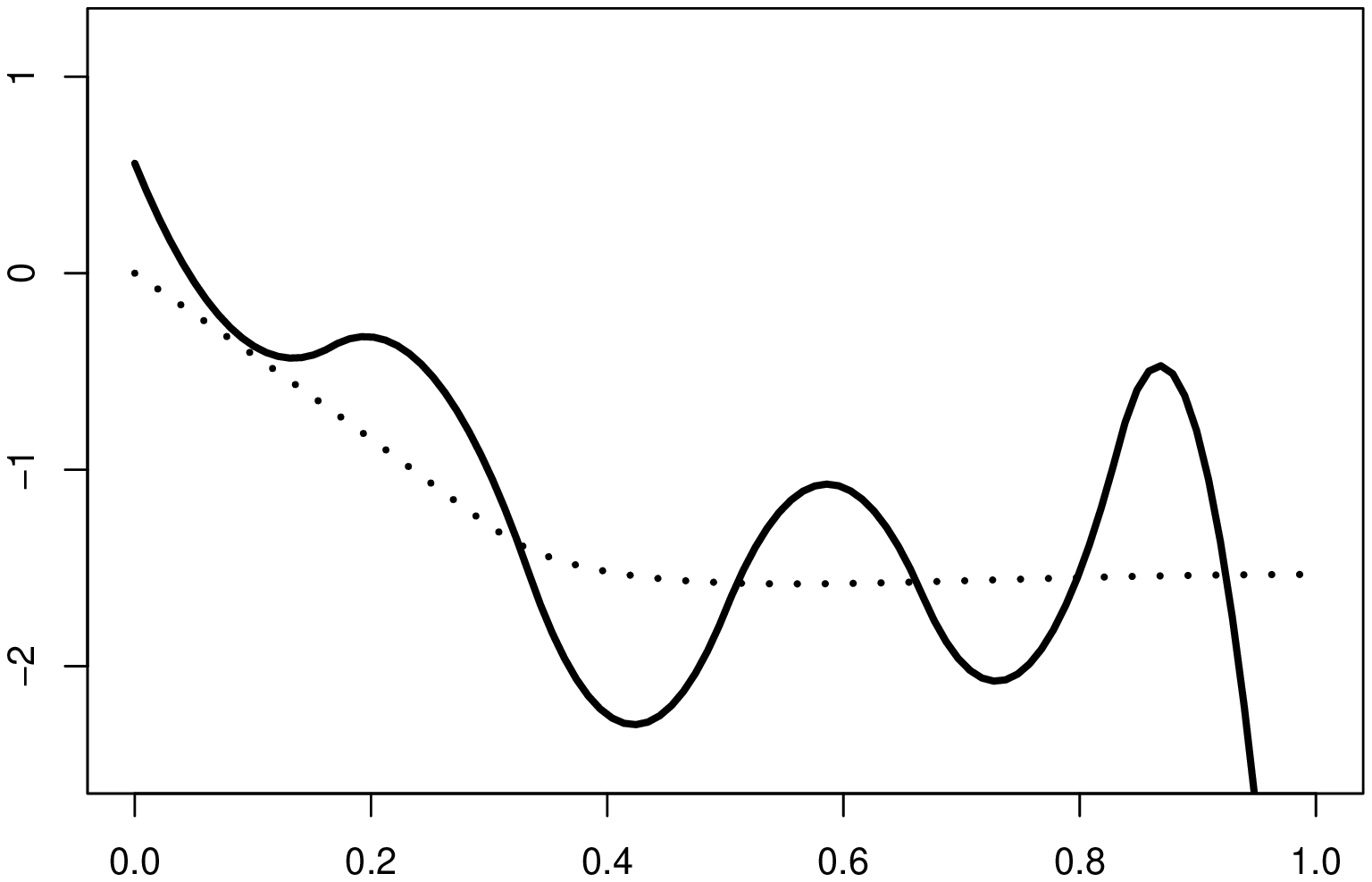} \\
\end{tabular} 
\caption{Results of two simulations with $n=1000$. Row 1 and 2: estimations of $f_0$ and $f_0'$ in Simulation 1. Row 3 and 4: estimations of $f_0$ and $f_0'$ in Simulation 2. Dots: posterior mean $\hat{f}_n$, Solid: true function $f_0$ or $f_0'$, Dashes: 95\% $L_\infty$-credible bands.}
\label{fig}
\end{figure}

The first and third rows of Figure~\ref{fig} show that the four methods lead to comparable estimation and uncertainty quantification when estimating $f_0$. However, the deviation between different methods is considerably widened for the estimation of $f_0'$. Mat\'{e}rn and squared exponential kernels constantly give the most accurate point estimation, and their credible bands cover the ground truth with reasonable width, indicating the effectiveness of the nonparametric plug-in procedure using GP priors. There is a tendency for the second-order Sobolev kernel to fail to capture $f_0'$ around the left endpoint, particularly in Simulation 1. The performance of the B-spline method continues to exhibit sensitivity to the choice of $N$, selected by leave-one-out cross validation. In Simulation 1, the selected $N$ is 1, and the B-spline prior yields comparable credible bands of $f_0'$ than GP priors with slightly altered estimation near zero; in Simulation 2 shown in the fourth row, the B-spline method with $N=5$ gives a point estimate that is substantially worse than the other three GP methods, and the associated credible bands are off the chart. We remark that the performance of B-splines might be substantially improved had the number of knots been selected by a different tuning method or with a different simulation setting. While leave-one-out cross validation may be appropriate for estimating $f_0$, as observed in~\cite{yoo2016supremum}, our results suggest that adjustments or alternative strategies seem to be needed when the objective is to make inference on $f_0'$. For GP priors, our numerical results suggest choosing $\lambda$ using empirical Bayes seems to be a reasonable strategy for both $f_0$ and $f_0'$, which is in line with the nonparametric plug-in property of GP priors.

\section{Proofs}\label{sec:proof}
This section contains all the proofs in the paper.
\subsection{Proofs of theorems in Section~\ref{sec:main.results} to Section~\ref{sec:MMLE}}

\begin{proof}[Proof of Theorem~\ref{thm:contraction}]
We first prove that if $\epsilon_n$ is a convergence rate, i.e., $\|\hat{f}_n - f_0 \|_p = O_P(\epsilon_n)$, it is also a contraction rate of $\Pi_n(\cdot\mid\data)$ at $f_0$. Define a centered GP posterior $Z$ as
\begin{equation}\label{eq:centered.GP}
Z:=(f-\hat{f}_n)|\data\sim {\rm GP}(0, \hat{V}_n),
\end{equation}
with psuedo-metric $\rho(\bx,\bx')=\sqrt{{\rm Var}(Z(\bx)-Z(\bx'))}$ for $\bx,\bx'\in\mX$. By Borell-TIS inequality (cf. Proposition A.2.1 in \cite{van1996weak}), we have for any $\epsilon>0$,
\begin{equation}
\Pi_n\left(\left| \|f-\hat{f}_n\|_\infty-\EE\|f-\hat{f}_n\|_\infty\right|>\epsilon\,\big|\,\data\right)\leq 2\exp\left(-\epsilon^2/2\|\hat{V}_n\|_\infty\right).
\end{equation}
Note that
\begin{equation} \Pi_n\left(\Big|\|f-\hat{f}_n\|_\infty-\EE\|f-\hat{f}_n\|_\infty\Big|>\epsilon\,\big|\,\data\right)\geq \Pi_n\left(\|f-\hat{f}_n\|_\infty-\EE\|f-\hat{f}_n\|_\infty>\epsilon\,\big|\,\data\right).
\end{equation}
By Theorem~\ref{thm:equivalent.sigma}, it holds with $\PP_0^{(n)}$-probability at least $1-n^{-10}$ that
\begin{equation}\label{eq:v.hat}
\|\hat{V}_n\|_\infty\leq \frac{2\sigma^2\tilde{\kappa}^2}{n}.
\end{equation}
Thus, we have
\begin{equation}
\Pi_n\left(\|f-\hat{f}_n\|_\infty-\EE\|f-\hat{f}_n\|_\infty>\epsilon\,\big|\,\data\right)\leq 2\exp\left(-C'n\epsilon^2/\tilde{\kappa}^2\right).
\end{equation}
By Dudley’s entropy integral Theorem and Lemma~\ref{lem:entropy.integral}, there exists $C_1>0$ such that
\begin{equation}
\EE\|f-\hat{f}_n\|_\infty\leq \int_0^{\rho(\mX)} \sqrt{\log N(\epsilon,\mX,\rho)}d\epsilon\leq C_1 \tilde{\kappa}\sqrt{\frac{\log n}{n}},
\end{equation}
where $N(\epsilon,\mX,\rho)$ is the $\epsilon$-covering number, namely, the minimal number of balls of radius $\epsilon$ needed to cover $\mX$ with respect to the metric $\rho$.

We next consider the two cases when $p=\infty$ and $p = 2$ separately. If $p = \infty$, i.e., $\|\hat{f}_n-f_0\|_\infty=O_P(\epsilon_n)$, then with $\PP_0^{(n)}$-probability tending to 1 we have
\begin{equation}\label{eq:supremum.eq1}
\|f-\hat{f}_n\|_\infty\geq \|f-f_0\|_\infty-\|\hat{f}_n-f_0\|_\infty\geq \|f-f_0\|_\infty-C\epsilon_n,
\end{equation}
which implies
\begin{equation}
\Pi_n\left(\|f-f_0\|_\infty>C\epsilon_n+C_1\tilde{\kappa}\sqrt{\frac{\log n}{n}}+\epsilon\,\Big|\,\data\right)\leq 2\exp\left(-C'n\epsilon_n^2/\tilde{\kappa}^2\right).
\end{equation}
Since $\tilde{\kappa}^2=O(n\epsilon_n^2/\log n)$, by letting $\epsilon=M_n\epsilon_n$ for any $M_n\rightarrow\infty$, it holds with $\PP_0^{(n)}$-probability tending to 1 that
\begin{equation}
\Pi_n\left(\|f-f_0\|_\infty>M_n\epsilon_n\,\big|\,\data\right)\rightarrow 0.
\end{equation}
Let $A_n$ denote the event that the preceding display holds, which satisfies $\PP_0^{(n)}(A_n^c)\rightarrow0$. Hence,
\begin{equation}
\PP_0^{(n)}\Pi_n\left(\|f-f_0\|_\infty>M_n\epsilon_n\,\big|\,\data\right)\leq \PP_0^{(n)}\Pi_n\left(\|f-f_0\|_\infty>M_n\epsilon_n\,\big|\,\data\right)\mathbbm{1}(A_n)+\PP_0^{(n)}(A_n^c)
\rightarrow 0.
\end{equation}

Now we consider the case of $p=2$. Since $\|\hat{f}_n-f_0\|_2=O_P(\epsilon_n)$, with $\PP_0^{(n)}$-probability tending to 1 we have
\begin{equation}
\|f-\hat{f}_n\|_\infty\geq \|f-\hat{f}_n\|_2\geq \|f-f_0\|_2-\|\hat{f}_n-f_0\|_2\geq \|f-f_0\|_2-C\epsilon_n.
\end{equation}
Comparing the preceding display with \eqref{eq:supremum.eq1} and following the same arguments, we have with $\PP_0^{(n)}$-probability tending to 1 that
\begin{equation}
\Pi_n\left(\|f-f_0\|_2 > M_n\epsilon_n\,\big|\,\data\right)\leq 2\exp\left(-C'n\epsilon_n^2/\tilde{\kappa}^2\right).
\end{equation}
Then a similar argument as in the case of $p = \infty$ yields that $\epsilon_n$ is also a posterior contraction rate. 

We then show the other direction in the theorem. Suppose we have
\begin{equation}
\Pi_n\left(f: \|f-f_0\|_p\geq M_n \epsilon_n \,\big|\, \data\right)\rightarrow 0
\end{equation}
in $\PP_0^{(n)}$-probability for $p=2,\infty$. Consider the ball $\mathcal{B}_{n,p}(\epsilon_n)=\{f: \|f-f_0\|_p<\epsilon_n\}$ of the metric space $(\Ltwo, \|\cdot\|_2)$ if $p = 2$ and $(L^\infty(\mX), \|\cdot\|_\infty)$ if $p = \infty$. Note that $\|\cdot\|_p$ is convex and uniformly bounded on the ball $\mathcal{B}_{n,p}(\epsilon_n)$. By Theorem 8.8 in \cite{ghosal2017fundamentals}, we have
\begin{equation}
\|\hat{f}_n-f_0\|_p\leq M_n\epsilon_n+\|d\|_\infty^{1/2}\Pi_n(\|f-f_0\|_p>M_n\epsilon_n\mid\data),
\end{equation}
conditional on $\mathcal{B}_{n,p}(\epsilon_n)$, where $\|d\|_\infty=\sup_{\bx,\bx'\in \mathcal{B}_{n,p}(\epsilon_n)}d(\bx,\bx')$. Since $\Pi_n(\|f-f_0\|_p>M_n\epsilon_n\mid\data)$ is exponentially small in $\PP_0^{(n)}$-probability, it follows that $\|\hat{f}_n-f_0\|_p=O_P(\epsilon_n)$.
\end{proof}

\begin{proof}[Proof of Theorem~\ref{thm:matern.contraction}]
In view of Lemma~\ref{lem:minimax.matern}, by choosing $\lambda\asymp ({\log n}/{n})^{\frac{2\alpha}{2\alpha+1}}$, with $\PP_0^{(n)}$-probability at least $1-n^{-10}$ we have
\begin{equation}
\|\hat{f}_n-f_0\|_2\lesssim \left(\frac{\log n}{n}\right)^{\frac{\alpha}{2\alpha+1}}.
\end{equation}
The corresponding $\tilde{\kappa}_\alpha^2\asymp\lambda^{-1/2\alpha}\asymp ({n}/{\log n})^{\frac{1}{2\alpha+1}}$ satisfies that $\tilde{\kappa}_\alpha^2=o(\sqrt{n/\log n})$ and $\tilde{\kappa}_\alpha^2=O(n\epsilon_n^2/\log n)$ for any $\alpha>1/2$. Also we have $\hat{\kappa}_{\alpha,01}^2\lesssim \lambda^{-1/\alpha}\asymp (n/\log n)^{\frac{2}{2\alpha+1}}=o(n)$ for any $\alpha>1/2$. The proof is completed following Theorem~\ref{thm:contraction}.
\end{proof}

\begin{proof}[Proof of Theorem~\ref{thm:exponential.contraction}]
In view of Lemma~\ref{lem:minimax.exp}, by choosing $\lambda\asymp 1/n$, with $\PP_0^{(n)}$-probability at least $1-n^{-10}$ we have
\begin{equation}
\|\hat{f}_n-f_0\|_2\lesssim \frac{\log n}{\sqrt{n}}.
\end{equation}
We take $\epsilon_n = {\log n}/{\sqrt{n}}$. It then suffices to verify the conditions in Theorem~\ref{thm:contraction}. 

We write $\tilde{\kappa}^2_\gamma$ and $\hat{\kappa}^2_{\gamma,01}$ as the specialized counterparts of $\tilde{\kappa}^2$ and $\hat{\kappa}_{01}^2$ for kernels with exponentially decaying eigenvalues. According to \eqref{eq:exp.kappa}, $\tilde{\kappa}_\gamma^2\asymp \log n$ satisfies that $\tilde{\kappa}_\gamma^2=o(\sqrt{n/\log n})$ and $\tilde{\kappa}_\gamma^2=O(n\epsilon_n^2/\log n)=O(\log n)$ for any $\gamma>0$. From \eqref{eq:tilde01} we also note that
\begin{equation}
\hat{\kappa}_{\gamma,01}^2\lesssim\sum_{i=1}^{\infty}\frac{i\mu_i}{n^{-1}+\mu_i}\lesssim\sum_{i=1}^{\infty}\frac{i}{1+n^{-1} e^{2\gamma i}}.
\end{equation}
There exists $c_1>0$ such that $\sum_{i=1}^{\infty}\frac{1}{1+n^{-1} e^{2\gamma i}}\leq c_1\log n$. Let $N=\lfloor c_1\log n\rfloor$, then we have
\begin{equation}
\sum_{i=1}^{N}\frac{i}{1+n^{-1} e^{2\gamma i}}\leq \sum_{i=1}^{N}\frac{c_1\log n}{1+n^{-1} e^{2\gamma i}}\leq (c_1\log n)^2.
\end{equation}
When $i\geq N+1$, there exists a constant $c_2>0$ such that $i^{-1}e^{2\gamma i}\gtrsim n+e^{2c_2\gamma i}$. Hence,
\begin{equation}
\sum_{i=N+1}^{\infty}\frac{i}{1+n^{-1} e^{2\gamma i}}=\sum_{i=N+1}^{\infty}\frac{1}{i^{-1}+n^{-1} i^{-1}e^{2\gamma i}}\lesssim \sum_{i=N+1}^{\infty}\frac{1}{1+n^{-1} e^{2c_2\gamma i}}\lesssim \log n.
\end{equation}
Therefore, we have
\begin{equation}
\hat{\kappa}_{\gamma,01}^2\lesssim (\log n)^{2}=o(n),
\end{equation}
and thus the conditions in Theorem~\ref{thm:contraction} are satisfied. This completes the proof. 
\end{proof}

\begin{proof}[Proof of Theorem~\ref{thm:deriv.contraction.l2}]
The proof follows similar arguments as in  Theorem~\ref{thm:contraction} but invokes a number of new error bounds established for derivatives. To delineate the differences, we proceed to show that $\epsilon_n$ is a posterior contraction rate if it is a convergence rate when $p = \infty$. 

Define a centered GP posterior $\tilde{Z}^k$ as
\begin{equation}\label{eq:centered.GP.tilde}
\tilde{Z}^k:=(f^{(k)}-\hat{f}^{(k)}_n)|\data\sim {\rm GP}(0, \tilde{V}_n^k),
\end{equation}
with psuedo-metric $\tilde{\rho}(x,x')=\sqrt{{\rm Var}(\tilde{Z}^k(x)-\tilde{Z}^k(x'))}$ for $x,x'\in[0,1]$. According to Borell-TIS inequality, we have
\begin{equation}
\Pi_{n,k}\left(\left|\|f^{(k)}-\hat{f}^{(k)}_n\|_\infty-\EE\|f^{(k)}-\hat{f}^{(k)}_n\|_\infty\right|>\epsilon\,\big|\,\data\right)\leq 2\exp\left(-\epsilon^2/2\|\tilde{V}^{k}_n\|_\infty\right).
\end{equation}
By Theorem~\ref{thm:equivalent.sigma.deriv}, it holds with $\PP_0^{(n)}$-probability at least $1-n^{-10}$ that
\begin{equation}
\|\tilde{V}^{k}_n\|_\infty \leq \frac{2\sigma^2\tilde{\kappa}_{kk}^2}{n}.
\end{equation}
Thus, we have
\begin{equation}
\Pi_{n,k}\left(\|f^{(k)}-\hat{f}^{(k)}_n\|_\infty-\EE\|f^{(k)}-\hat{f}^{(k)}_n\|_\infty>\epsilon\,\big|\,\data\right)\leq 2\exp\left(-C_1n\epsilon^2/\tilde{\kappa}_{kk}^2\right).
\end{equation}
By Dudley’s entropy integral Theorem and Lemma~\ref{lem:entropy.integral.deriv}, there exists $C_2>0$ such that
\begin{equation}\label{eq:entropy.integral.deriv}
\EE\|f^{(k)}-\hat{f}^{(k)}_n\|_\infty\leq \int_0^{\tilde{\rho}([0,1])} \sqrt{\log N(\epsilon,[0,1],\tilde{\rho})}d\epsilon\leq C_2 \tilde{\kappa}_{kk}\sqrt{\frac{\log n}{n}}.
\end{equation}
Since $\|\hat{f}_n^{(k)}-f_0^{(k)}\|_\infty=O_P(\epsilon_n)$, with $\PP_0^{(n)}$-probability tending to 1 we have
\begin{equation}
\Pi_{n,k}\left(\|f^{(k)}-f_0^{(k)}\|_\infty>C\epsilon_n+C_1\tilde{\kappa}_{kk}\sqrt{\frac{\log n}{n}}+\epsilon\,\big|\,\data\right)\leq 2\exp\left(-C'n\epsilon^2/\tilde{\kappa}^2\right).
\end{equation}
Since $\tilde{\kappa}_{kk}^2=O(n\epsilon_n^2/\log n)$, by letting $\epsilon=M_n\epsilon_n$ for any $M_n\rightarrow\infty$, it holds with $\PP_0^{(n)}$-probability tending to 1 that
\begin{equation}
\Pi_{n,k}\left(\|f^{(k)}-f_0^{(k)}\|_\infty>M_n\epsilon_n\,\big|\,\data\right)=o(1).
\end{equation}
Therefore, $\epsilon_n$ is a contraction rate under the $L_\infty$ norm. 

The case when $p = 2$ and the converse statements for $p = 2$ and $p = \infty$ follow from the same arguments used in proving the counterpart results in Theorem~\ref{thm:contraction}. This completes the proof. 
\end{proof}

\begin{proof}[Proof of Theorem~\ref{thm:contraction.deriv}]
Theorem~\ref{thm:minimax.diff.matern} provides the convergence rate of $\hat{f}_n^{(k)}$ under the $L_2$ norm to be $\epsilon_n=(\log n/n)^{\frac{\alpha-k}{2\alpha+1}}$. For it to become a contraction rate of the posterior distribution $\Pi_{n,k}(\cdot \mid \data)$, by our equivalence theory, we only need to verify the conditions in Theorem~\ref{thm:deriv.contraction.l2}.

Note that the Fourier basis $\{\psi_i\}_{i=1}^\infty$ satisfies Condition (B) with $L_{k,\phi}=\sqrt{2}(2\pi)^{k+1}$. According to Lemma~\ref{lem:differentiability.matern}, we have $\tilde{\kappa}_{\alpha,mm}^2\asymp \lambda^{-\frac{2m+1}{2\alpha}}$ for any $m<\alpha-1/2$ and $m\in\mathbb{N}$. Since $\lambda\asymp (\log n/n)^{\frac{2\alpha}{2\alpha+1}}$, we have $\tilde{\kappa}_\alpha^2\asymp\lambda^{-1/2\alpha}\asymp ({n}/{\log n})^{\frac{1}{2\alpha+1}}=o(\sqrt{n/\log n})$. It also follows that
\begin{equation}
\tilde{\kappa}_{\alpha,kk}\hat{\kappa}_{\alpha,k+1,k+1}\lesssim \lambda^{-\frac{2k+1}{4\alpha}}\lambda^{-\frac{2k+3}{4\alpha}}\asymp\left(\frac{n}{\log n}\right)^{\frac{2(k+1)}{2\alpha+1}}=o(n),
\end{equation}
for any $k<\alpha-3/2$ and $k\in\mathbb{N}$. Moreover, we can see that
\begin{equation}
\tilde{\kappa}_{\alpha,kk}^2\asymp \lambda^{-\frac{2k+1}{2\alpha}}\asymp n^{\frac{2k+1}{2\alpha+1}} (\log n) ^{-\frac{2k+1}{2\alpha+1}}=O(n\epsilon_n^2/\log n).
\end{equation}
This completes the proof. 
\end{proof}

\begin{proof}[Proof of Theorem~\ref{thm:eb.contraction}]
Let $u_1\geq u_2\geq \ldots\geq u_n$ denote the eigenvalues of $K(X,X))$. Since $K(X,X)$ is non-negative definite, we have $u_i\geq 0$ for $1\leq i\leq n$. Note that $\sup_{\bx\in\mX}K(\bx,\bx)<\infty$ as $K$ is a continuous bivariate function on a compact support $\mX\times \mX$. Then we have $\sum_{i=1}^{n}u_i=\tr(K(X,X))\lesssim n$. 
Let $\bm f=(f(X_1),\ldots,f(X_n))^T$. The MMLE $\hat{\sigma}_n^2$ is a quadratic form in $Y$. Hence, in view of the well known formula for the expectation of quadratic forms (cf. Theorem 11.19 in \cite{schott2016matrix}), we obtain
\begin{equation}
\EE(\hat{\sigma}^2_n)=\lambda \sigma_0^2\tr([K(X,X)+n\lambda \bI_n]^{-1})+\lambda\bm f^T[K(X,X)+n\lambda \bI_n]^{-1}\bm f.
\end{equation}
Therefore,
\begin{align}
|\EE(\hat{\sigma}^2_n)-\sigma_0^2| &\leq \left|\lambda \sigma_0^2\tr([K(X,X)+n\lambda \bI_n]^{-1})-\sigma_0^2\right|+\lambda\bm f^T[K(X,X)+n\lambda \bI_n]^{-1}\bm f\\
&\leq \left|n^{-1} \sigma_0^2\tr([(n\lambda)^{-1}K(X,X)+\bI_n]^{-1})-\sigma_0^2\right|+\lambda\bm f^T[K(X,X)+n\lambda \bI_n]^{-1}\bm f.
\end{align}
It follows that the first term is bounded by
\begin{equation}\label{eq:variance.1}
\begin{aligned}
&\left|n^{-1} \sigma_0^2\tr([(n\lambda)^{-1}K(X,X)+\bI_n]^{-1})-\sigma_0^2\right|\\
&\qquad\qquad= n^{-1}\sigma_0^2 \tr(\bI_n-[(n\lambda)^{-1}K(X,X)+\bI_n]^{-1})
=n^{-1}\sigma_0^2\sum_{i=1}^n\left(1-\frac{1}{u_i/n\lambda+1}\right)\\
&\qquad\qquad= n^{-1}\sigma_0^2\sum_{i=1}^{n}\frac{u_i}{n\lambda+u_i}
\leq n^{-1}\sigma_0^2\sum_{i=1}^{n} \frac{u_i}{n\lambda}
\lesssim (n\lambda)^{-1}.
\end{aligned}
\end{equation}
Let $\lambda_{\max}(A)$ denote the largest eigenvalue of a matrix $A$. We have $\lambda_{\max}([K(X,X)+n\lambda \bI_n]^{-1})=(u_n+n\lambda)^{-1}\leq (n\lambda)^{-1}$. Hence,
\begin{equation}\label{eq:variance.2}
\lambda\bm f^T[K(X,X)+n\lambda \bI_n]^{-1}\bm f \leq \lambda\cdot \lambda_{\max}([K(X,X)+n\lambda \bI_n]^{-1})\|\bm f\|_2^2
\leq \|\bm f\|_\infty^2n^{-1}.
\end{equation}
Combining \eqref{eq:variance.1} and \eqref{eq:variance.2} gives
\begin{equation}
|\EE(\hat{\sigma}^2_n-\sigma_0^2)|\lesssim (n\lambda)^{-1}.
\end{equation}

We now bound the variance of $\hat{\sigma}^2_n$. Using the variance formula for quadratic forms (cf. Theorem 11.23 in \cite{schott2016matrix}), we have
\begin{align}
\Var(\hat{\sigma}^2_n)&=2\lambda^2\sigma_0^4\tr([K(X,X)+n\lambda \bI_n]^{-2})+4\lambda^2 \sigma_0^2\bm f^T[K(X,X)+n\lambda \bI_n]^{-2}\bm f\\
&\leq 2\lambda^2\sigma_0^4\cdot n(n\lambda)^{-2}+4\lambda^2 \sigma_0^2\cdot \lambda_{\max}([K(X,X)+n\lambda \bI_n]^{-2})\|\bm f\|_2^2\\
&\leq 2\sigma_0^4 n^{-1}+4\sigma_0^2\|\bm f\|_2^2 n^{-2}.
\end{align}
Therefore,
\begin{equation}
\EE(\hat{\sigma}^2_n-\sigma_0^2)^2=\Var(\hat{\sigma}^2_n-\sigma_0^2)+[\EE(\hat{\sigma}^2_n-\sigma_0^2)]^2\lesssim n^{-1}+(n\lambda)^{-2}=o(1).
\end{equation}
It follows that $\hat{\sigma}^2_n$ converges to $\sigma_0^2$ in $\PP_0^{(n)}$-probability by applying Chebyshev's inequality.

Now we prove Theorem~\ref{thm:contraction} under the empirical Bayes scheme as an example. The results for the differential operator (i.e., Theorem~\ref{thm:deriv.contraction.l2}) and minimax rates in specific examples (Theorem~\ref{thm:matern.contraction}, Theorem~\ref{thm:exponential.contraction} and Theorem~\ref{thm:contraction.deriv}) follow similar arguments. Consider $\mathcal{B}_n$ to be a shrinking neighborhood of $\sigma_0^2$ such that $\PP_0^{(n)}(\hat{\sigma}^2_n\in\mathcal{B}_n)\rightarrow 1$. Conditional on $\mathcal{B}_n$, \eqref{eq:v.hat} becomes 
\begin{equation}
\|\hat{V}_n\|_\infty\leq \frac{2(\sigma_0^2+o(1))\tilde{\kappa}^2}{n}.
\end{equation}
Then, all the established inequalities in the proof of Theorem~\ref{thm:contraction} hold uniformly over $\sigma^2\in \mathcal{B}_n$. In particular, given the convergence rate of $\hat{f}_n$, it follows that
\begin{equation}
\sup_{\sigma^2\in\mathcal{B}_n}\Pi_n\left(f: \|f-f_0\|_p\geq M_n \epsilon_n \big|\, \data,\sigma^2\right)\rightarrow 0
\end{equation}
in $\PP_0^{(n)}$-probability for $p=2,\infty$, which directly implies that
\begin{equation}
\Pi_{n,\text{EB}}\left(f: \|f-f_0\|_p\geq M_n \epsilon_n \big|\, \data\right)\rightarrow 0
\end{equation}
in $\PP_0^{(n)}$-probability for $p=2,\infty$. That posterior contraction rates of $\Pi_{n,\text{EB}}(\cdot\mid\data)$ imply convergence rates of $\hat{f}_n$ follows the same argument as in Theorem~\ref{thm:contraction}. This completes the proof. 
\end{proof}

\subsection{Proofs in Section~\ref{sec:non-asymptotic}}

\begin{proof}[Proof of Corollary~\ref{cor:h.tilde.noiseless}]
We first consider the inequality with respect to the $\|\cdot\|_{\tilde{\bbH}}$ norm. If $\tilde{\kappa}^2=o(\sqrt{n/\log n})$, then for sufficiently large $n$ we have $C(n,\tilde{\kappa})\leq 1/2$. The noise-free version of Theorem~\ref{thm:equivalent.bound} yields that with $\PP_0^{(n)}$-probability at least $1-n^{-10}$,
\begin{equation}
\|\hat{f}_n-f_\lambda\|_{\tilde{\bbH}}\leq \tilde{\kappa}^{-1}\|f_\lambda-f_0\|_\infty.
\end{equation}
Then we have
\begin{align}
\|\hat{f}_n-f_0\|_{\tilde{\bbH}}&\leq\|\hat{f}_n-f_\lambda\|_{\tilde{\bbH}}+\|f_\lambda-f_0\|_{\tilde{\bbH}}\\
&\leq \tilde{\kappa}^{-1}\|f_\lambda-f_0\|_\infty+\|f_\lambda-f_0\|_{\tilde{\bbH}}\\
&\leq 2\|f_\lambda-f_0\|_{\tilde{\bbH}},
\end{align}
where the last inequality follows from \eqref{eq:sup.RKHS.norm}.

We then prove the error bound under the $L_\infty$ norm. Applying \eqref{eq:sup.RKHS.norm} to Theorem~\ref{thm:equivalent.bound} with $\sigma=0$, it holds with $\PP_0^{(n)}$-probability at least $1-n^{-10}$ that
\begin{equation}
\|\hat{f}_n-f_\lambda\|_\infty\leq \frac{C(n,\tilde{\kappa})}{1-C(n,\tilde{\kappa})}\|f_\lambda-f_0\|_\infty.
\end{equation}
With the same choice of $\lambda$ such that $\tilde{\kappa}^2=o(\sqrt{\log n/n})$ and $C(n,\tilde{\kappa})\leq 1/2$, the preceding display becomes
\begin{equation}
\|\hat{f}_n-f_\lambda\|_\infty\leq \|f_\lambda-f_0\|_\infty.
\end{equation}
The proof is completed by the triangle inequality.
\end{proof}

\begin{proof}[Proof of Lemma~\ref{lem:minimax.exp}]
Considering the equivalent kernel $\tilde{K}_\gamma$, from \eqref{eq:def.kappa} we have
\begin{equation}\label{eq:exp.kappa}
\begin{split}
\tilde{\kappa}_\gamma^2&=\sup_{x\in[0,1]}\tilde{K}_\gamma(x,x)\lesssim\sum_{i=1}^{\infty}\frac{\mu_i}{\lambda+\mu_i}\lesssim\sum_{i=1}^{\infty}\frac{1}{1+\lambda e^{2\gamma i}}\\
&\leq \int_0^{\infty}\frac{dx}{1+\lambda e^{2\gamma x}}=\frac{\log(1+\lambda^{-1})}{2\gamma}\lesssim -\log\lambda.
\end{split}
\end{equation}
Substituting $\tilde{\kappa}_\gamma\lesssim \sqrt{-\log\lambda}$ into Theorem 5 in \cite{liu2020non}, we obtain that with $\PP_0^{(n)}$-probability at least $1-n^{-10}$ it holds
\begin{equation}\label{eq:exponential.eq1}
\|\hat{f}_n-f_0\|_2\lesssim 2\|f_\lambda-f_0\|_\infty+ \frac{8\sigma \sqrt{20\log n}\sqrt{-\log\lambda}}{\sqrt{n}},
\end{equation}
where $\lambda$ is chosen to satisfy that $\tilde{\kappa}_\gamma^2=o(\sqrt{n/\log n})$. 

Let $f_0=\s f_i\phi_i$. Then we have $f_\lambda-f_0=-\s\frac{\lambda}{\lambda+\mu_i}f_i\phi_i$. Hence,
\begin{equation}\label{eq:exponential.eq2}
\|f_\lambda-f_0\|_\infty\lesssim \s \frac{\lambda}{\lambda+\mu_i} |f_i|=\sqrt{\lambda}\s\frac{\sqrt{\lambda\mu_i}}{\lambda+\mu_i}\frac{|f_i|}{\sqrt{\mu_i}}\lesssim \sqrt{\lambda}\s e^{\gamma i}|f_i|\lesssim \sqrt{\lambda}.
\end{equation}
Combining \eqref{eq:exponential.eq2} and \eqref{eq:exponential.eq1} gives
\begin{equation}
\label{eq:dummay1.11.20}
\|\hat{f}_n-f_0\|_2\lesssim 2\sqrt{\lambda}+ \frac{8 \sigma \sqrt{20\log n}\sqrt{-\log\lambda}}{\sqrt{n}}.
\end{equation}
The upper bound is minimized at $\lambda\asymp 1/n$, which satisfies $\tilde{\kappa}_\gamma^2\lesssim \log n=o(\sqrt{n/\log n})$ for any $\gamma>0$. The proof is completed by substituting this $\lambda$ into~\eqref{eq:dummay1.11.20}. 
\end{proof}

\begin{proof}[Proof of Lemma~\ref{lem:differentiability.matern}]
Recall the definition of $\tilde{\kappa}_{kk}^2$ in \eqref{eq:high.order.kappa}. It follows that for any $m\in\mathbb{N}_0$,
\begin{align}
\tilde{\kappa}_{\alpha,mm}^2&=\sup_{x\in[0,1]}\s\frac{\mu_i}{\lambda+\mu_i}\psi^{(m)}_i(x)^2\\
&\lesssim \sum_{i=1}^{\infty}\frac{i^{2m}}{1+\lambda i^{2\alpha}}\leq
\int_0^{\infty}\frac{(x+1)^{2m}dx}{1+\lambda x^{2\alpha}}\asymp  \lambda^{-\frac{2m+1}{2\alpha}},
\end{align}
where the last step holds for $\alpha>m+\frac{1}{2}$. On the other hand, we have
\begin{align}
\tilde{\kappa}_{\alpha,mm}^2&\gtrsim\s\left[\frac{(2i)^{2m}}{1+\lambda (2i)^{2\alpha}}\cos(2\pi ix)^2+\frac{(2i+1)^{2m}}{1+\lambda (2i+1)^{2\alpha}}\sin(2\pi ix)^2\right]\\
&\geq \s \min\left\{\frac{(2i)^{2m}}{1+\lambda (2i)^{2\alpha}},\frac{(2i+1)^{2m}}{1+\lambda (2i+1)^{2\alpha}}\right\}\\
&\geq \frac{1}{2} \sum_{i=1}^{\infty}\frac{i^{2m}}{1+\lambda i^{2\alpha}}\asymp  \lambda^{-\frac{2m+1}{2\alpha}},
\end{align}
where we also need $\alpha>m+\frac{1}{2}$. The differentiability of $\tilde{K}_\alpha$ directly follows from the boundedness of $\tilde{\kappa}^2_{\alpha,mm}$ for any fixed $\lambda$.

The rates for $\hat{\kappa}_{\alpha, 01}^2$ and $\hat{\kappa}_{\alpha, k+1, k+1}^2$, defined in \eqref{eq:tilde01} and \eqref{eq:kappa.k+1}, can be  obtained by direct calculation. In particular, we have 
\begin{equation}
\hat{\kappa}_{\alpha, 01}^2=\s\frac{i\mu_i}{\lambda+\mu_i}=\s \frac{i}{1+\lambda i^{2\alpha}} \lesssim \lambda^{-\frac{1}{\alpha}}
\end{equation}
and
\begin{equation}
\hat{\kappa}_{\alpha, k+1, k+1}^2=\s\frac{i^{2k+2}\mu_i}{\lambda+\mu_i}=\frac{i^{2k+2}}{1+\lambda i^{2\alpha}} \lesssim \lambda^{-\frac{2k+3}{2\alpha}},
\end{equation}
which hold when $\alpha>1$ and $\alpha>k+\frac{3}{2}$, respectively.
\end{proof}

\begin{proof}[Proof of Lemma~\ref{lem:matern.RKHS.derivative}]
In view of Corollary 4.36 in \cite{steinwart2008support}, $\tilde{K}_\alpha\in C^{2m}([0,1]\times[0,1])$ implies that $f\in C^m[0,1]$ for any $f \in \tilde{\bbH}_\alpha$. This is also true for $f \in \bbH_\alpha$ since $\bbH_\alpha$ and $\tilde{\bbH}_\alpha$ contain the same functions.

Now we prove the norm inequality. Let $f=\s f_i\psi_i$ where $\{\psi_i\}_{i=1}^\infty$ is the Fourier basis, then $\|f^{(m)}\|_2^2\asymp \s (f_i i^m)^2$ for any $m\in\mathbb{N}_0$. It is equivalent to showing that
\begin{equation}
\tilde{\kappa}_\alpha^2 \cdot \s f_i^2i^{2m} \leq C\tilde{\kappa}_{\alpha,mm}^2 \cdot \s f_i^2\frac{\lambda+\mu_i}{\mu_i},
\end{equation}
for some $C>0$. Hence, it suffices to show that which is equivalent to showing that for any $i\in\mathbb{N}$,
\begin{equation}
\tilde{\kappa}_\alpha^2 \cdot f_i^2i^{2m}\leq C \tilde{\kappa}_{\alpha,mm}^2 \cdot f_i^2\frac{\lambda+\mu_i}{\mu_i}.
\end{equation}
In view of Lemma~\ref{lem:differentiability.matern}, we have $\tilde{\kappa}_{\alpha,mm}^2 \asymp \lambda^{-\frac{2m+1}{2\alpha}}$, which also leads to $\tilde{\kappa}_\alpha^2 \asymp \lambda^{-\frac{1}{2\alpha}}$ when taking $m = 0$. Since $\mu_i\asymp i^{-2\alpha}$, it is sufficient to show that
\begin{equation}\label{eq:matern.ineq}
\lambda^{\frac{m}{\alpha}} i^{2m}\leq C' (1+\lambda i^{2\alpha}),
\end{equation}
for some constant $C'>0$. The above equation trivially holds for $C' = 1$ if $\lambda^{\frac{m}{\alpha}} i^{2m}\leq 1$. If $\lambda^{\frac{m}{\alpha}} i^{2m}\geq 1$, then $\lambda^{\frac{m}{\alpha}} i^{2m}\leq (\lambda^{\frac{m}{\alpha}} i^{2m})^{\frac{\alpha}{m}}=\lambda i^{2\alpha}$ since $m<\alpha$. Taking $C' = 1$ completes the proof.
\end{proof}

\begin{proof}[Proof of Lemma~\ref{lem:matern.deriv.deterministic}]
Let $f_0=\s f_i\psi_i$. Then, for any $k\in\mathbb{N}_0$,
\begin{equation}
f_\lambda^{(k)}-f_0^{(k)}=-\s\frac{\lambda}{\lambda+\mu_i}f_i\psi_i^{(k)}.
\end{equation}
Hence,
\begin{equation}
\|f_\lambda^{(k)}-f_0^{(k)}\|_\infty\leq \s \frac{\lambda}{\lambda+\mu_i} |f_i|\cdot i^k\lesssim \lambda^{\frac{1}{2}-\frac{k}{2\alpha}}\s \frac{ i^{k-\alpha}\cdot \lambda^{\frac{1}{2}+\frac{k}{2\alpha}}}{i^{-2\alpha}+\lambda} \cdot i^\alpha|f_i|.
\end{equation}
By Young's inequality for products, we have
\begin{equation}
i^{k-\alpha} \cdot \lambda^{\frac{1}{2}+\frac{k}{2\alpha}}\leq \frac{\alpha-k}{2\alpha}\cdot i^{-2\alpha} +\frac{\alpha+k}{2\alpha}\cdot \lambda\lesssim i^{-2\alpha}+\lambda.
\end{equation}
Therefore, $\|f_\lambda^{(k)}-f_0^{(k)}\|_\infty\lesssim \lambda^{\frac{1}{2}-\frac{k}{2\alpha}}\s  i^\alpha|f_i|\lesssim \lambda^{\frac{1}{2}-\frac{k}{2\alpha}}$. This completes the proof. 
\end{proof}

\begin{proof}[Proof of Theorem~\ref{thm:minimax.diff.matern}]
Applying Lemma~\ref{lem:matern.RKHS.derivative} to Theorem~\ref{thm:equivalent.bound} yields with $\PP_0^{(n)}$-probability at least $1-n^{-10}$ that
\begin{align}
\|\hat{f}_n^{(k)}-f_\lambda^{(k)}\|_2& \leq \tilde{\kappa}_\alpha^{-1}\tilde{\kappa}_{\alpha,kk} \|\hat{f}_n-f_\lambda\|_{\tilde{\bbH}_\alpha}\\
&\leq\frac{\tilde{\kappa}_{\alpha,kk}\tilde{\kappa}_\alpha^{-2}C(n,\tilde{\kappa}_\alpha)}{1-C(n,\tilde{\kappa}_\alpha)}\|f_\lambda-f_0\|_\infty+\frac{1}{1-C(n,\tilde{\kappa}_\alpha)}\frac{4\tilde{\kappa}_{\alpha,kk}\sigma\sqrt{20\log n}}{\sqrt{n}}.
\end{align}
Note that $\tilde{\kappa}_\alpha^{-2}C(n,\tilde{\kappa}_\alpha)\asymp \sqrt{\log n/n}$. By choosing $\lambda$ such that $\tilde{\kappa}_\alpha^2=o(\sqrt{n/\log n})$, $\tilde{\kappa}_{\alpha,kk}=o(\sqrt{n/\log n})$ and $C(n,\tilde{\kappa}_\alpha)\leq 1/2$, we arrive at
\begin{equation}
\|\hat{f}_n^{(k)}-f_\lambda^{(k)}\|_2\lesssim  2\|f_\lambda-f_0\|_\infty+\frac{8\tilde{\kappa}_{\alpha,kk}\sigma\sqrt{20\log n}}{\sqrt{n}}.
\end{equation}
According to Lemma~\ref{lem:matern.deriv.deterministic}, we obtain
\begin{align}
\|\hat{f}_n^{(k)}-f_0^{(k)}\|_2 &\lesssim\|f_\lambda^{(k)}-f_0^{(k)}\|_2+ 2\|f_\lambda-f_0\|_\infty+\frac{8\tilde{\kappa}_{\alpha,kk}\sigma\sqrt{20\log n}}{\sqrt{n}}\\
&\leq\|f_\lambda^{(k)}-f_0^{(k)}\|_\infty+ 2\|f_\lambda-f_0\|_\infty+\frac{8\tilde{\kappa}_{\alpha,kk}\sigma\sqrt{20\log n}}{\sqrt{n}}\\
&\lesssim\lambda^{\frac{1}{2}-\frac{k}{2\alpha}}+\lambda^{\frac{1}{2}}+\lambda^{-\frac{2k+1}{4\alpha}}\sqrt{\frac{\log n}{n}}.
\end{align}
The upper bound in the preceding display is minimized when $\lambda\asymp ({\log n}/{n})^{\frac{2\alpha}{2\alpha+1}}$, which satisfies $\tilde{\kappa}_\alpha^2\asymp(n/\log n)^{\frac{1}{2\alpha+1}}=o(\sqrt{n/\log n})$ and $\tilde{\kappa}_{\alpha,kk}\asymp (n/\log n)^{\frac{2k+1}{2(2\alpha+1)}}=o(\sqrt{n/\log n})$. The proof is completed by substituting $\lambda$.
\end{proof}

\begin{proof}[Proof of Theorem~\ref{thm:equivalent.sigma}]
Recall from \eqref{eq:posterior.variance} that we have
\begin{equation}
|\sigma^{-2}n\lambda\hat{V}_n(\bx')| =|\sigma^{-2}n\lambda \hat{V}_n(\bx',\bx')| \leq\|\hat{K}_{\bx'}-K_{\bx'}\|_\infty
\end{equation}
for any $\bx \in \mX$. Since $\hat{K}_{\bx'}$ is the solution to a kernel ridge regression with noiseless observations \eqref{eq:noiseless.krr}, in view of Corollary~\ref{cor:h.tilde.noiseless}, we have with $\PP_0^{(n)}$-probability at least $1 - n^{-10}$ that
\begin{equation}
\|\hat{K}_{\bx'}-K_{\bx'}\|_\infty\leq 2\|L_{\tilde{K}}K_{\bx'}-K_{\bx'}\|_\infty \leq 2\bigg\|\s \frac{\lambda}{\lambda+\mu_i}\mu_i\phi_i(\bx')\phi_i\bigg\|_\infty \leq 2\lambda\tilde{\kappa}^2.
\end{equation}
Therefore, it follows that with $\PP_0^{(n)}$-probability at least $1 - n^{-10}$ we have 
\begin{equation}
\|\hat{V}_n\|_\infty\leq \frac{2\sigma^2\tilde{\kappa}^2}{n}.
\end{equation}
\end{proof}

\begin{proof}[Proof of Theorem~\ref{thm:equivalent.sigma.deriv}]
Let $K_{0k,x'}(\cdot)=\partial_{x'}^kK(\cdot,x')$ and 
$$\hat{K}_{0k,x'}(\cdot)=K(\cdot, X)[K(X, X) + n \lambda \bI_n]^{-1} K_{0k}(X, x').$$ 
It is easy to see that $\hat{K}_{0k,x'}$ is the solution to a noise-free KRR with observations $K_{0k,x'}$. Moreover, we have
\begin{equation}\label{eq:tilde.variance}
\begin{split}
|\sigma^{-2}n\lambda\tilde{V}^{k}_{n}(x')| &= \left|\partial^k_{x}(\hat{K}_{0k,x'}-K_{0k,x'})(x)|_{x=x'}\right|\\
&\leq \|\partial^k_{x}(\hat{K}_{0k,x'}-K_{0k,x'})\|_\infty \leq \tilde{\kappa}_{kk}
\|\hat{K}_{0k,x'}-K_{0k,x'}\|_{\tilde{\bbH}}.
\end{split}
\end{equation}
According to Corollary~\ref{cor:h.tilde.noiseless}, it holds with $\PP_0^{(n)}$-probability at least $1 - n^{-10}$  that
\begin{equation}
\|\hat{K}_{0k,x'}-K_{0k,x'}\|_{\tilde{\bbH}}\leq 2\|L_{\tilde{K}}K_{0k,x'}-K_{0k,x'}\|_{\tilde{\bbH}} \leq 2\lambda \tilde{\kappa}_{kk}.
\end{equation}
\end{proof}

\subsection{Auxiliary results and proofs}

\begin{lem}\label{lem:entropy.integral}
Under the conditions of Theorem~\ref{thm:contraction}, let $\rho$ be the intrinsic psuedo-metric of the centered Gaussian process \eqref{eq:centered.GP}. Then, it holds with $\PP_0^{(n)}$-probability at least $1 - n^{-10}$ that
\begin{align}
\int_0^{\rho(\mX)} \sqrt{\log N(\epsilon,\mX,\rho)}d\epsilon\lesssim \tilde{\kappa}\sqrt{\frac{\log n}{n}}.
\end{align}
\end{lem}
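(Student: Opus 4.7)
The plan is to combine a diameter bound derived from Theorem~\ref{thm:equivalent.sigma} with a quantitative modulus-of-continuity estimate that converts the covering number in $\rho$ into one under the Euclidean metric on $\mX \subset \bbR^p$. The diameter bound is immediate: since $\rho(\bx,\bx')^2 = \hat V_n(\bx) + \hat V_n(\bx') - 2\hat V_n(\bx,\bx') \le 4\|\hat V_n\|_\infty$, Theorem~\ref{thm:equivalent.sigma} gives $D := \rho(\mX) \lesssim \tilde\kappa/\sqrt{n}$.

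The heart of the argument is to show a H\"older-type bound $\rho(\bx,\bx') \le L_n \sqrt{\|\bx-\bx'\|}$ with $L_n$ at most polynomial in $n$. I would first derive the identity
\begin{equation*}
\sigma^{-2} n\lambda\, \rho(\bx, \bx')^2 = (V_\bx - V_{\bx'})(\bx) - (V_\bx - V_{\bx'})(\bx'),
\end{equation*}
where $V_\bx := K_\bx - \hat K_\bx$ is the residual of the noise-free KRR of $K_\bx$ defined in \eqref{eq:noiseless.krr}. This follows from writing each entry of $\hat V_n$ via \eqref{eq:posterior.variance} and using the symmetry $V_\bx(\bx') = V_{\bx'}(\bx)$, which in turn stems from the symmetry of $[K(X,X) + n\lambda\bI_n]^{-1}$. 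The identity implies $\rho(\bx,\bx')^2 \le 2\sigma^2 (n\lambda)^{-1} \|V_\bx - V_{\bx'}\|_\infty$.

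Because KRR is linear, $V_\bx - V_{\bx'}$ is the residual of the noise-free KRR applied to $K_\bx - K_{\bx'}$. Applying Corollary~\ref{cor:h.tilde.noiseless} with $f_0 = K_\bx - K_{\bx'}$ gives $\|V_\bx - V_{\bx'}\|_\infty \le 2\|f_\lambda - (K_\bx - K_{\bx'})\|_\infty$ with $\PP_0^{(n)}$-probability at least $1-n^{-10}$, where $f_\lambda = L_{\tilde K}(K_\bx - K_{\bx'})$. An eigenfunction calculation using $L_{\tilde K}\phi_i = \nu_i\phi_i$ yields $f_\lambda - (K_\bx-K_{\bx'}) = -\lambda(\tilde K_\bx - \tilde K_{\bx'})$, and Conditions (A1) and (A2) then give
\begin{equation*}
\|\tilde K_\bx - \tilde K_{\bx'}\|_\infty \le C_\phi \sum_i \nu_i|\phi_i(\bx) - \phi_i(\bx')| \le C_\phi L_\phi \hat\kappa_{01}^2 \|\bx - \bx'\|.
\end{equation*}
Combining these steps produces the desired H\"older bound with $L_n \asymp \hat\kappa_{01}/\sqrt{n}$.

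Since $\mX\subset\bbR^p$ is compact, $N(\delta,\mX,\|\cdot\|)\lesssim \delta^{-p}$, so the H\"older bound implies $N(\epsilon,\mX,\rho)\lesssim (L_n/\epsilon)^{2p}$. The Dudley-type integral is then controlled by $\int_0^D \sqrt{2p\log(L_n/\epsilon)+C}\,d\epsilon \lesssim D\sqrt{\log(L_n/D)+1}$. The assumptions $\hat\kappa_{01}^2 = o(n)$ and $\tilde\kappa^2 = o(\sqrt{n/\log n})$ of Theorem~\ref{thm:contraction} force $L_n/D \lesssim \hat\kappa_{01}/\tilde\kappa = O(\sqrt n)$, so $\log(L_n/D) \lesssim \log n$; together with $D\lesssim \tilde\kappa/\sqrt n$ this yields the claimed rate $\tilde\kappa\sqrt{\log n/n}$. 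The main obstacle is obtaining the right H\"older continuity of $\rho$: a direct bound via $\|K_\bx-K_{\bx'}\|_\bbH^2 = \sum \mu_i(\phi_i(\bx)-\phi_i(\bx'))^2$ requires $\sum i^2\mu_i < \infty$, which need not hold under our mild assumptions. Routing through the noise-free KRR identity and Corollary~\ref{cor:h.tilde.noiseless} replaces this divergent quantity with $\hat\kappa_{01}^2$, which is exactly the controlled object.
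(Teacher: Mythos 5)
Your proposal is correct and follows essentially the same route as the paper's proof: both express $\rho(\bx,\bx')^2$ via the noise-free KRR residual of $K_\bx-K_{\bx'}$, invoke Corollary~\ref{cor:h.tilde.noiseless} to reduce to $\|L_{\tilde K}(K_\bx-K_{\bx'})-(K_\bx-K_{\bx'})\|_\infty$, use Condition (A2) to get the H\"older bound $\rho\lesssim \hat\kappa_{01}n^{-1/2}\|\bx-\bx'\|^{1/2}$, and combine the resulting entropy bound with the diameter bound $\rho(\mX)\lesssim\tilde\kappa/\sqrt n$ in Dudley's integral. The only cosmetic difference is that you obtain the diameter from Theorem~\ref{thm:equivalent.sigma} plus Cauchy--Schwarz, while the paper rederives it from the same $\|L_{\tilde K}g-g\|_\infty\le 2\lambda\tilde\kappa^2$ computation.
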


\begin{proof}[Proof of Lemma~\ref{lem:entropy.integral}]

Recall that in \eqref{eq:posterior.variance} the posterior covariance can be expressed as the bias of a noise-free KRR:
\begin{equation}
\sigma^{-2}n\lambda \hat{V}_n(\bx,\bx')=K_{\bx'}(\bx)-\hat{K}_{\bx'}(\bx),
\end{equation}
where $K_{\bx'}(\cdot)=K(\cdot,\bx')$ and $\hat{K}_{\bx'}(\cdot)=K(\cdot, X)[K(X, X) + n \lambda \bI_n]^{-1} K(X, \bx')$. It follows that
\begin{align}
\rho(\bx,\bx')^2&={\rm Var}(Z(\bx),Z(\bx'))
= \hat{V}_n(\bx,\bx)+\hat{V}_n(\bx',\bx')-2\hat{V}_n(\bx,\bx')\\
&= \sigma^2 (n \lambda)^{-1} \{K_{\bx}(\bx) - \hat{K}_{\bx}(\bx)+K_{\bx'}(\bx') - \hat{K}_{\bx'}(\bx')-2K_{\bx'}(\bx) +2 \hat{K}_{\bx'}(\bx)\}\\
&= \sigma^2 (n \lambda)^{-1} \{(K_{\bx}-K_{\bx'})(\bx) - (K_{\bx}-K_{\bx'})(\bx') - (\hat{K}_{\bx}-\hat{K}_{\bx'})(\bx)+ (\hat{K}_{\bx}-\hat{K}_{\bx'})(\bx')\}.
\end{align}
Letting $g=K_{\bx}-K_{\bx'}$ and $\hat{g}_n=\hat{K}_{\bx}-\hat{K}_{\bx'}$, the preceding display implies
\begin{equation}
\sigma^{-2}n\lambda\rho(\bx,\bx')^2=(g-\hat{g}_n)(\bx)-(g-\hat{g}_n)(\bx').
\end{equation}
In view of Corollary~\ref{cor:h.tilde.noiseless}, we have with $\PP_0^{(n)}$-probability at least $1 - n^{-10}$  that
\begin{equation}\label{eq:rho.decomposition}
\sigma^{-2}n\lambda\rho(\bx,\bx')^2 \leq 2\|\hat{g}_n-g\|_\infty\leq 4\|L_{\tilde{K}}g-g\|_\infty.
\end{equation}
Substituting $g=\sum_{i=1}^{\infty}\mu_i(\phi_i(\bx)-\phi_i(\bx'))\phi_i$ yields the following bound 
\begin{equation}\label{eq:rho.euclid}
\begin{split}
\|L_{\tilde{K}}g-g\|_\infty&= \bigg\| \sum_{i=1}^{\infty}\frac{\lambda\mu_i(\phi_i(\bx)-\phi_i(\bx'))\phi_i}{\lambda+\mu_i}\bigg\|_\infty\\
&\leq \lambda L_\phi \|\bx-\bx'\| \s \frac{i\mu_i  \|\phi_i\|_\infty}{\lambda+\mu_i}\\
&\leq C_\phi L_\phi\hat{\kappa}_{01}^2\lambda \|\bx-\bx'\|,
\end{split}
\end{equation}
where the first inequality follows from Condition (A2). Then \eqref{eq:rho.decomposition} and \eqref{eq:rho.euclid} together give
\begin{equation}
\rho(\bx,\bx')\leq C \hat{\kappa}_{01} n^{-1/2}\|\bx-\bx'\|^{1/2}.
\end{equation}
The preceding inequality shows that $\rho$ is bounded above by the Euclidean norm up to a multiplicative constant. By the inequality in \citet[p.~529]{ghosal2017fundamentals}, we have
\begin{equation}
N(\epsilon,\mX,\rho)\leq N\left((C^{-1}\sqrt{n}\epsilon/\hat{\kappa}_{01})^{2},\mX,\|\cdot\|\right)\lesssim \left(\frac{1}{(\sqrt{n}\epsilon/\hat{\kappa}_{01})^{2}}\right)^d.
\end{equation}
Since $\hat{\kappa}_{01}^2=o(n)$, the preceding inequality implies
\begin{equation}
\log N(\epsilon,\mX,\rho)\lesssim 2d\log \left(\frac{1}{\epsilon}\right).
\end{equation}
On the other hand, we have
\begin{align}
\rho(\bx,\bx')^2&\leq 4\sigma^2(n\lambda)^{-1}\|L_{\tilde{K}}g-g\|_\infty\\
&\leq 4\sigma^2(n\lambda)^{-1}\bigg\|\sum_{i=1}^{\infty}\frac{\lambda\mu_i(\phi_i(\bx)-\phi_i(\bx'))\phi_i}{\lambda+\mu_i}\bigg\|_\infty\\
&\leq 4\sigma^2(n\lambda)^{-1} \cdot 2\lambda \tilde{\kappa}^2\\
&\lesssim \tilde{\kappa}^2n^{-1},
\end{align}
which implies $\rho(\mX)\lesssim \tilde{\kappa}/\sqrt{n}$. Therefore,
\begin{align}
\int_0^{\rho(\mX)} \sqrt{\log N(\epsilon,\mX,\rho)}d\epsilon\lesssim \tilde{\kappa}\sqrt{\frac{\log n}{n}}.
\end{align}
\end{proof}

\begin{lem}\label{lem:entropy.integral.deriv}
Under the conditions of Theorem~\ref{thm:deriv.contraction.l2}, let $\tilde{\rho}$ be the intrinsic psuedo-metric of the centered Gaussian process \eqref{eq:centered.GP.tilde}. Then, it holds with $\PP_0^{(n)}$-probability at least $1 - n^{-10}$ that
\begin{equation}
\int_0^{\tilde{\rho}([0,1])} \sqrt{\log N(\epsilon,[0,1],\tilde{\rho})}d\epsilon\lesssim \tilde{\kappa}_{kk}\sqrt{\frac{\log n}{n}}.
\end{equation}
\end{lem}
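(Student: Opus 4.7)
The plan is to mirror the argument of Lemma~\ref{lem:entropy.integral}, upgrading each step from the regression function to the $k$-th derivative through the equivalent-kernel bound on $\tilde{\bbH}$. The first step is to give $\tilde{\rho}(x,x')^2 = \tilde{V}^k_n(x,x) + \tilde{V}^k_n(x',x') - 2\tilde{V}^k_n(x,x')$ a noise-free KRR interpretation. Recall from \eqref{eq:posterior.variance} that $\sigma^{-2}n\lambda\,\hat{V}_n(z,w) = K_w(z) - \hat{K}_w(z) =: u(z,w)$. Since differentiation in $w$ commutes with the KRR operation, $\partial_w^k u(z,w) = K_{0k,w}(z) - \hat{K}_{0k,w}(z) =: \eta_w(z)$, and therefore $\sigma^{-2}n\lambda\,\tilde{V}^k_n(x,y) = \partial_z^k \eta_y(z)|_{z=x}$. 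Expanding $\tilde{\rho}(x,x')^2$ with this representation and using the symmetry $\tilde{V}^k_n(x,x') = \tilde{V}^k_n(x',x)$, I can rearrange the three terms to obtain the key identity
\begin{equation*}
\sigma^{-2}n\lambda\,\tilde{\rho}(x,x')^2 \;=\; H^{(k)}(x) - H^{(k)}(x'), \qquad H(z) := \eta_x(z) - \eta_{x'}(z).
\end{equation*}
By linearity of KRR, the function $H$ is precisely the residual of the noise-free KRR whose target is $\tilde{g} := K_{0k,x} - K_{0k,x'}$, with observations $\tilde{g}(X_i)$ at the design points.

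From here, the Cauchy--Schwarz argument in the equivalent RKHS (the same one used to prove \eqref{eq:sup.RKHS.norm}) gives $|H^{(k)}(z)| \leq \tilde{\kappa}_{kk}\|H\|_{\tilde{\bbH}}$ pointwise, hence $|H^{(k)}(x) - H^{(k)}(x')| \leq 2\tilde{\kappa}_{kk}\|H\|_{\tilde{\bbH}}$. Under the condition $\tilde{\kappa}^2 = o(\sqrt{n/\log n})$, Corollary~\ref{cor:h.tilde.noiseless} applied to the noise-free KRR for $\tilde{g}$ yields $\|H\|_{\tilde{\bbH}} \leq 2\|L_{\tilde{K}}\tilde{g} - \tilde{g}\|_{\tilde{\bbH}}$ with $\PP_0^{(n)}$-probability at least $1 - n^{-10}$. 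Expanding $\tilde{g}$ in the eigenbasis and using $\nu_i - 1 = -\lambda/(\lambda+\mu_i)$ gives the spectral identity
\begin{equation*}
\|L_{\tilde{K}}\tilde{g} - \tilde{g}\|_{\tilde{\bbH}}^2 \;=\; \lambda^2 \s \frac{\mu_i}{\lambda+\mu_i}\bigl(\phi_i^{(k)}(x) - \phi_i^{(k)}(x')\bigr)^2,
\end{equation*}
which combined with Condition (B) is bounded by $\lambda^2 L_{k,\phi}^2 \hat{\kappa}_{k+1,k+1}^2 |x-x'|^2$. Chaining the inequalities yields the H\"older-type estimate $\tilde{\rho}(x,x') \lesssim n^{-1/2}\sqrt{\tilde{\kappa}_{kk}\hat{\kappa}_{k+1,k+1}}\,|x-x'|^{1/2}$.

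For the diameter I would invoke Theorem~\ref{thm:equivalent.sigma.deriv}: $\tilde{\rho}(x,x')^2 \leq 2\tilde{V}^k_n(x) + 2\tilde{V}^k_n(x') \leq 8\sigma^2\tilde{\kappa}_{kk}^2/n$, so $\tilde{\rho}([0,1]) \lesssim \tilde{\kappa}_{kk}/\sqrt{n}$. The H\"older bound translates to $N(\epsilon,[0,1],\tilde{\rho}) \lesssim \tilde{\kappa}_{kk}\hat{\kappa}_{k+1,k+1}/(n\epsilon^2)$ by transferring to the Euclidean cover of $[0,1]$. Under the hypothesis $\tilde{\kappa}_{kk}\hat{\kappa}_{k+1,k+1} = o(n)$, the prefactor is at most $1$, so $\log N(\epsilon,[0,1],\tilde{\rho}) \lesssim \log(1/\epsilon)$ up to additive constants. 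A standard estimate $\int_0^A \sqrt{\log(1/\epsilon)}\,d\epsilon \lesssim A\sqrt{\log(1/A)}$ with $A \asymp \tilde{\kappa}_{kk}/\sqrt{n}$ and $\log(1/A) = O(\log n)$ then delivers the claimed bound $\tilde{\kappa}_{kk}\sqrt{\log n/n}$.

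The principal obstacle is the first identity connecting the variogram $\tilde{\rho}(x,x')^2$ to a single residual $H^{(k)}(x) - H^{(k)}(x')$. In contrast with Lemma~\ref{lem:entropy.integral}, the presence of two derivatives (one per argument of $\tilde{V}^k_n$) makes it non-obvious that the three terms of the variance collapse to a single first-order increment of $H^{(k)}$; getting the cancellation right requires carefully bookkeeping the two distinct KRR residuals $\eta_x$ and $\eta_{x'}$ and exploiting the symmetry $T(x,y) = T(y,x)$ of $\sigma^{-2}n\lambda\,\tilde{V}^k_n(x,y)$. Once this identity is in place, the remaining steps parallel those of Lemma~\ref{lem:entropy.integral}, with $\tilde{\kappa}$ and $\hat{\kappa}_{01}$ replaced by $\tilde{\kappa}_{kk}$ and $\hat{\kappa}_{k+1,k+1}$ respectively.
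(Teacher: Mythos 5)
Your proposal is correct and follows essentially the same route as the paper's proof: the same collapse of the variogram to $\partial^k(g_{0k}-\hat{g}_{0k})(x)-\partial^k(g_{0k}-\hat{g}_{0k})(x')$ with $g_{0k}=K_{0k,x}-K_{0k,x'}$, the same chain through the derivative-supremum bound $\tilde{\kappa}_{kk}\|\cdot\|_{\tilde{\bbH}}$, Corollary~\ref{cor:h.tilde.noiseless}, the spectral identity with Condition (B), and the resulting H\"older estimate in $\sqrt{\tilde{\kappa}_{kk}\hat{\kappa}_{k+1,k+1}}$. The only cosmetic difference is that you invoke Theorem~\ref{thm:equivalent.sigma.deriv} for the diameter bound where the paper re-derives $\|L_{\tilde{K}}g-g\|_{\tilde{\bbH}}\leq 2\lambda\tilde{\kappa}_{kk}$ inline; both give $\tilde{\rho}([0,1])\lesssim\tilde{\kappa}_{kk}/\sqrt{n}$.
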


\begin{proof}[Proof of Lemma~\ref{lem:entropy.integral.deriv}]
Recall that in \eqref{eq:tilde.variance} the posterior covariance can be expressed the bias of a noise-free KRR estimator:
\begin{equation}
\sigma^{-2}n\lambda \tilde{V}^k_n(x,x')=\partial^k_x(K_{0k,x'}-\hat{K}_{0k,x'})(x).
\end{equation}
Hence, we have
\begin{align}
\tilde{\rho}(x,x')^2&={\rm Var}(\tilde{Z}^k(x),\tilde{Z}^k(x'))=\tilde{V}^k_n(x,x)+\tilde{V}^k_n(x',x')-2\tilde{V}^k_n(x,x')\\
&= \sigma^2 (n \lambda)^{-1} \{\partial^k_xK_{0k,x}(x) - \partial^k_x\hat{K}_{0k,x}(x)+\partial^k_{x'}K_{0k,x'}(x') - \partial^k_{x'}\hat{K}_{0k,x'}(x')\\
&\qquad\qquad\quad\ -2\partial^k_xK_{0k,x'}(x) +2\partial^k_x \hat{K}_{0k,x'}(x)\}\\
&= \sigma^2 (n \lambda)^{-1} \{\partial^k_x(K_{0k,x}-K_{0j,x'})(x) - \partial^k_{x'}(K_{0k,x}-K_{0k,x'})(x') \\
&\qquad\qquad\quad\ - \partial^k_x(\hat{K}_{0k,x}-\hat{K}_{0k,x'})(x)+ \partial^k_{x'}(\hat{K}_{0k,x}-\hat{K}_{0k,x'})(x')\}.
\end{align}
Let $g_{0k}=K_{0k,x}-K_{0k,x'}$ and $\hat{g}_{0k}=\hat{K}_{0k,x}-\hat{K}_{0k,x'}$. Then the preceding display implies
\begin{equation}
\sigma^{-2}n\lambda\tilde{\rho}(x,x')^2=\partial^k_x(g_{0k}-\hat{g}_{0k})(x)-\partial^k_{x'}(g_{0k}-\hat{g}_{0k})(x').
\end{equation}
By Corollary~\ref{cor:h.tilde.noiseless}, with $\PP_0^{(n)}$-probability at least $1 - n^{-10}$ it holds that
\begin{equation}\label{eq:rho.decomposition.tilde}
\begin{split}
\sigma^{-2}n\lambda\tilde{\rho}(x,x')^2&\leq 2\|\partial^k(\hat{g}_{0k}-g_{0k})\|_\infty\\
&\leq 2\tilde{\kappa}_{kk}\|\hat{g}_{0k}-g_{0k}\|_{\tilde{\bbH}}\\
&\leq 4\tilde{\kappa}_{kk}\|L_{\tilde{K}}g_{0k}-g_{0k}\|_{\tilde{\bbH}}.
\end{split}
\end{equation}
Substituting $g_{0k}=\sum_{i=1}^{\infty}\mu_i(\phi^{(k)}_i(x)-\phi^{(k)}_i(x'))\phi_i$ yields
\begin{equation}\label{eq:rho.euclid.tilde}
\begin{split}
\|L_{\tilde{K}}g_{0k}-g_{0k}\|_{\tilde{\bbH}}^2&=  \sum_{i=1}^{\infty}\left(\frac{\lambda\mu_i(\phi^{(k)}_i(x)-\phi^{(k)}_i(x'))}{\lambda+\mu_i}\right)^2\bigg/\frac{\mu_i}{\lambda+\mu_i}\\
&\leq \lambda^2 L_{k,\phi}^2 |x-x'|^{2} \s \frac{i^{2k+2}\mu_i}{\lambda+\mu_i}\\
&\leq  L_{k,\phi}^2 \lambda^2 \hat{\kappa}_{k+1,k+1}^2 |x-x'|^{2}.
\end{split}
\end{equation}
It follows from \eqref{eq:rho.decomposition.tilde} and \eqref{eq:rho.euclid.tilde} that
\begin{equation}\label{eq:tilde.rho}
\tilde{\rho}(x,x')\leq C\sqrt{\tilde{\kappa}_{kk}\hat{\kappa}_{k+1,k+1}} n^{-1/2} |x-x'|^{1/2}.
\end{equation}
Again by the inequality in \citet[p.~529]{ghosal2017fundamentals}, we have
\begin{equation}
N(\epsilon,[0,1],\tilde{\rho})\leq N\left(\left(C^{-1}\epsilon\sqrt{n/\tilde{\kappa}_{kk}\hat{\kappa}_{k+1,k+1}}\right)^{2},[0,1],|\cdot|\right)\lesssim \frac{1}{(\epsilon\sqrt{n/\tilde{\kappa}_{kk}\hat{\kappa}_{k+1,k+1}})^{2}}.
\end{equation}
Since $\tilde{\kappa}_{kk}\hat{\kappa}_{k+1,k+1}=o(n)$, we obtain $\log N(\epsilon,[0,1],\tilde{\rho})\lesssim \log(1/\epsilon)$. On the other hand, note that
\begin{align}
\|L_{\tilde{K}}g-g\|_{\tilde{\bbH}}^2&=  \sum_{i=1}^{\infty}\left(\frac{\lambda\mu_i(\phi_i^{(k)}(x)-\phi_i^{(k)}(x'))}{\lambda+\mu_i}\right)^2\bigg/\frac{\mu_i}{\lambda+\mu_i}\\
&\leq \sum_{i=1}^{\infty}\frac{2\lambda^2\mu_i(\phi_i^{(k)}(x)^2+\phi_i^{(k)}(x')^2)}{\lambda+\mu_i}\\
&\leq 4\lambda^2\tilde{\kappa}_{kk}^2.
\end{align}
The preceding inequality combined with \eqref{eq:rho.decomposition.tilde} gives that
\begin{equation}
\tilde{\rho}(x,x')^2\leq 4\sigma^2(n\lambda)^{-1}\tilde{\kappa}_{kk}\|L_{\tilde{K}}g-g\|_{\tilde{\bbH}}\leq 8\sigma^2 \tilde{\kappa}_{kk}^2 /n,
\end{equation}
which implies $\tilde{\rho}([0,1])\lesssim \tilde{\kappa}_{kk}/\sqrt{n}$. Therefore,
\begin{equation}
\int_0^{\tilde{\rho}([0,1])} \sqrt{\log N(\epsilon,[0,1],\tilde{\rho})}d\epsilon \lesssim \int_0^{\tilde{\kappa}_{kk}/\sqrt{n}} \sqrt{\log(1/\epsilon)}d\epsilon \lesssim \tilde{\kappa}_{kk}\sqrt{\frac{\log n}{n}}.
\end{equation}
\end{proof}

\section*{Acknowledgements}
We would like to thank William Yoo for providing R code to implement the B-spline prior in the simulation section. 

\bibliographystyle{apalike}
\bibliography{EquivalencePlugin}

\end{document}